\DeclareMathOperator{\frob}{Frob}
\DeclareMathOperator{\ord}{ord}
\DeclareMathOperator{\Li}{Li}
\newtheorem{theorem}{Theorem}[section]
\newtheorem{proposition}[theorem]{Proposition}
\newtheorem{lemma}[theorem]{Lemma}
\newtheorem{corollary}[theorem]{Corollary}
\theoremstyle{definition}
\newtheorem{definition}[theorem]{Definition}
\theoremstyle{remark}
\numberwithin{equation}{section}
\begin{document}

\title{Rational $2$-Functions are abelian}

\author[L.F. M\"{u}ller]{L. Felipe M\"{u}ller}
\address{Mathematisches Institut, Universit\"{a}t Heidelberg, Im Neuenheimer Feld 205, 69120 Heidelberg, Germany}
\email[corresponding author]{lmueller@mathi.uni-heidelberg.de}

\subjclass[2010]{Primary: 81Q60,  	30C10; Secondary: 33B30.}

\date{\today.}

\keywords{}

\begin{abstract}

We show that the coefficients of rational $2$-functions are contained in an abelian number field.
More precisely, we show that the poles of such functions are poles of order one and  given by roots of unity and rational residue.
\end{abstract}

\maketitle
\tableofcontents

\section{Introduction}

Fermat's and Euler's congruences are well-known in number theory and are rich of remarkable consequences.
In the following we will give a short survey of these congruences.
We start with the famous

\begin{theorem}[Euler]\label{Euler's theorem}
The congruence
\begin{align}\label{euler congruence for integers}
	a^{p^r}\equiv a^{p^{r-1}}\mod p^r
\end{align}
holds for all integers $a \in \mathbb{Z}$, all primes $p$, and all natural numbers $r \in \mathbb{N}$.
\end{theorem}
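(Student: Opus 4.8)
The plan is to prove \eqref{euler congruence for integers} by induction on $r$, with the inductive step powered by the following elementary lemma: if $x,y \in \mathbb{Z}$ satisfy $x \equiv y \pmod{p^s}$ for some $s \geq 1$, then $x^p \equiv y^p \pmod{p^{s+1}}$. Granting the lemma, the base case $r=1$ is precisely Fermat's little theorem $a^p \equiv a \pmod p$ (reading $p^{r-1}=1$), and the inductive step is immediate: assuming $a^{p^{r-1}} \equiv a^{p^{r-2}} \pmod{p^{r-1}}$, I apply the lemma with $x = a^{p^{r-1}}$, $y = a^{p^{r-2}}$, $s = r-1$ to obtain $a^{p^r} \equiv a^{p^{r-1}} \pmod{p^r}$, which is the desired congruence.

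To prove the lemma I write $x = y + tp^s$ with $t \in \mathbb{Z}$ and expand via the binomial theorem:
\[
  x^p = y^p + \binom{p}{1} y^{p-1} t\, p^s + \sum_{j=2}^{p} \binom{p}{j} y^{p-j} t^j p^{sj}.
\]
The first correction term equals $y^{p-1} t\, p^{s+1}$ and so vanishes modulo $p^{s+1}$. For $2 \le j \le p-1$ the coefficient $\binom{p}{j}$ is divisible by $p$ and the factor $p^{sj}$ contributes exponent $sj \ge 2s \ge s+1$, so these terms are divisible by $p^{s+1}$ (in fact by more). The extreme term $t^p p^{sp}$ has $sp \ge 2s \ge s+1$, hence also vanishes modulo $p^{s+1}$. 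Therefore $x^p \equiv y^p \pmod{p^{s+1}}$.

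It remains to supply the base case, Fermat's little theorem $a^p \equiv a \pmod p$. For $a \ge 0$ this follows by an inner induction on $a$: the case $a=0$ is trivial, and $(a+1)^p \equiv a^p + 1 \equiv a+1 \pmod p$, again using $p \mid \binom{p}{j}$ for $1 \le j \le p-1$ together with the inductive hypothesis. For $a<0$ one reduces $a$ modulo $p$ to a non-negative representative. This completes the argument.

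The proof involves no serious difficulty; the one place that rewards care is the bookkeeping of $p$-adic valuations in the binomial expansion, in particular the top term $j=p$, where $\binom{p}{p}=1$ and the needed divisibility must come entirely from the power $p^{sp}$, and the role of the hypothesis $s \ge 1$, which is exactly what guarantees $2s \ge s+1$ and hence keeps the induction going. A group-theoretic alternative, which I would avoid in favour of the self-contained version above, splits into the cases $p \mid a$ (using $p^{r-1} \ge r$, so that both sides are $\equiv 0 \pmod{p^r}$) and $\gcd(a,p)=1$ (using that $(\mathbb{Z}/p^r\mathbb{Z})^\times$ has order $p^{r-1}(p-1)$, so that $a^{p^{r-1}(p-1)} \equiv 1 \pmod{p^r}$).
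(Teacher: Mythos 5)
Your proof is correct: the lemma that $x \equiv y \pmod{p^s}$ with $s \ge 1$ implies $x^p \equiv y^p \pmod{p^{s+1}}$ is verified carefully (including the $j=p$ term, where $sp \ge s+1$ indeed needs only $p \ge 2$ and $s \ge 1$), Fermat's little theorem is supplied as the base case, and the induction on $r$ closes without gaps. The paper itself states Euler's theorem as a classical fact and gives no proof, so there is nothing internal to compare against; your argument is the standard one, and it is complete as written.
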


A sequence $(a_k)_{k \in \mathbb{N}}$ of rational numbers is called an \textit{Euler sequence} (or \textit{Gauss sequence} as in \cite{beu17}) for the prime $p$, if $a_k$ is a $p$-adic integer for all $k \in \mathbb{N}$ and
\begin{align}\label{Euler sequence}
	a_{mp^r}\equiv a_{mp^{r-1}} \mod p^r
\end{align}
for all integers $r\geq 1$ and $m\geq 1$.
A survey of these congruences  has been given in \cite{min14} and \cite{zar08}.

Beukers coined the term \textit{supercongruence}:
A supercongruence (with respect to a prime $p$) refers to a sequence $(a_n)_{n \in \mathbb{N}} \in \mathbb{Z}^{\mathbb{N}}_p$ that satisfies congruences of the type
\begin{align}\label{supercongruence}
	a_{mp^r}\equiv a_{mp^{r-1}} \mod p^{sr},
\end{align}
for all $m,r\in \mathbb{N}$ and a fixed $s\in \mathbb{N}$, $s>1$ (cf. \cite{cos88}).
Such supercongruences are given by the Jacobsthal-Kazandzidis congruence (cf. \cite{bru52}),  Apéry numbers (cf. \cite{ape79}, \cite{beu85}), generalized Domb numbers (cf. \cite{osb12})
and Almkvist-Zudilin numbers (cf. \cite{alm06}, \cite{gor18})
to name a few.
Note that all the above mentioned supercongruences are valid for $s=3$ with respect to $p\geq 5$.

Let $K$ be an algebraic number field and $\mathcal{O}$ its ring of algebraic integers.
We consider a generalization of supercongruences to sequences of algebraic integers in $K$.
More precisely, for $s\in \mathbb{N}$, an $s$-sequence is a sequence $(a_n)\in K^\mathbb{N}$, such that for any unramified prime ideal $\mathfrak{p}\in \mathcal{O}$ lying above the prime $p\in \mathbb{Z}$, $a_n\in \mathcal{O}_\mathfrak{p}$, and for all $m, r\in \mathbb{N}$,
\begin{align*}
\frob_\mathfrak{p} (a_{p^{r-1}m}) - a_{p^rm} \equiv 0 \mod \mathfrak{p}^{sr}\mathcal{O}_\mathfrak{p},
\end{align*}
where $\mathcal{O}_\mathfrak{p}$ is the ring of $\mathfrak{p}$-adic integers and $\frob_{\mathfrak{p}}$ is the canonical lift of the standard Frobenius element of $\mathfrak{p}$ in the Galois group of the local field extension $(\mathcal{O}/ \mathfrak{p}) | (\mathbb{Z}/p)$.
The generating function $V(z)$ of an $s$-sequence then integrates to what is referred to as an $s$-function in \cite{walcher16}.
More precisely, the $s$-sequence $a \in K^{\mathbb{N}}$ corresponds to the $s$-function $\smallint\hspace{-0.25em}^s V(z)$ (see \Cref{s-sequence/function correspondence} in Section 2) given by the (formal) power series
\begin{align*}
\smallint\hspace{-0.25em}^s V(z)=\sum_{n=1}^\infty  \frac{a_n}{n^s}z^n \in z K\llbracket z \rrbracket,
\end{align*}
Interestingly, $2$-functions (where $s=2$) have their geometric origin in super symmetry.
As stated in \cite{walcher16}, see Thm. 22 therein, $2$-functions appear as the non-singular part of the superpotential function (without the constant term) with algebraic coefficients.
In other words, algebraic cycles on Calabi-Yau three-folds provide a source of $2$-functions that are analytic and furthermore satisfy a differential equation with algebraic coefficients.
It is therefore expected that understanding the numerical interpretation of open Gromov-Witten/BPS theory relative to Lagrangian submanifolds  mirror to algebraic cycles highly depend on delivering some (natural) basis of the class of $2$-functions with algebraic coefficients.
It is therefore of main interest to characterize a submodule of $s$-functions by suitable algebraic or analytic properties, and a class of distinguished generators for this submodule.
The contribution of the present work to this problem is to give a characterization of a $2$-function $\smallint^{2} V(z)$, where $V$ represents a rational function.
We have

\begin{theorem}\label{main theorem}
Let $V \in zK\llbracket z \rrbracket$, $V(z)\neq 0$, be the generating function of a $2$-sequence $(a_n)_{n\in \mathbb{N}} \in K^\mathbb{N}$, representing the rational function $F(z)\in K(z)$ as its Maclaurin expansion.
Then, there are rational coefficients $A_i \in \mathbb{Q}$ for $i=1,...,N$ and an appropriate primitive $N$-th root of unity $\zeta$, such that
\begin{align}\label{eq: general rational 2-function}
	F(z)=\sum_{i=1}^N\frac{A_i\zeta ^i z}{1- \zeta^i z}.
\end{align}
In particular, the coefficients $a_n$ of $V(z)$ have the form
\begin{align}\label{explicit coefficients}
	a_n = \sum_{i=1}^N A_i \zeta^{in}.
\end{align}
\end{theorem}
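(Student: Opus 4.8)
The plan is to pass to the partial‑fraction expansion of $F$ and then to pin down its poles by localising the $2$‑congruence at infinitely many carefully chosen primes, working throughout with the Maclaurin coefficients
\begin{align*}
  a_n=\sum_{j=1}^{t}P_j(n)\,\alpha_j^{\,n}\qquad(n\ge 1),
\end{align*}
where $1/\alpha_1,\dots,1/\alpha_t$ are the distinct poles of $F$ and $\deg P_j=m_j-1$ with $m_j$ the order of the pole at $1/\alpha_j$ (so $\sum_j m_j=\deg Q$, where $F=P/Q$ in lowest terms with $Q(0)=1$). Since each $a_n$ is $\mathfrak p$‑integral for every unramified $\mathfrak p$, a Fatou‑type argument over the completions $\mathcal O_\mathfrak p$ shows that $Q$ has $S$‑integral coefficients ($S$ the finite set of ramified primes), so the $\alpha_j$ are algebraic $S$‑integers, each a unit away from a finite set of primes. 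Fixing a number field $L\supseteq K$ containing all $\alpha_j$ and all coefficients of the $P_j$, it suffices to prove that every $P_j$ is a nonzero \emph{constant} $A_j\in\mathbb Q$ and that every $\alpha_j$ is a root of unity; then, writing the $\alpha_j$ as powers of a common primitive $N$‑th root of unity $\zeta$ ($N$ the l.c.m.\ of their orders) and collecting terms, one gets \eqref{eq: general rational 2-function} and \eqref{explicit coefficients}, the nonvanishing of the relevant $A_i$ following from $V\neq 0$.

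\emph{Poles are simple.} Fix a finite bad set $\Sigma$ of primes of $L$: those above $S$, those dividing the discriminant, a denominator or a nonzero coefficient of some $P_j$, or some $N_{L/\mathbb Q}(\alpha_i-\alpha_j)$ with $i\neq j$. For $\mathfrak P\notin\Sigma$ above a rational prime $p$ that splits completely in $L$, one has $\mathcal O_L/\mathfrak P=\mathbb F_p$, the $\alpha_j$ reduce to distinct units, $\frob_{\mathfrak P}=\mathrm{id}$, and the $2$‑congruence collapses to $\sum_j\bigl(P_j(p^{r-1}m)\alpha_j^{\,p^{r-1}m}-P_j(p^rm)\alpha_j^{\,p^rm}\bigr)\equiv 0\pmod{\mathfrak P^{2r}}$. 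Taking $r=1$: since $pm\equiv 0$ and $\bar\alpha_j^{\,p}=\bar\alpha_j$ in $\mathbb F_p$, the $j$‑th term reduces mod $\mathfrak P$ to $\bar\alpha_j^{\,m}\,\overline{(P_j-P_j(0))}(m)$, which vanishes exactly when $P_j\bmod\mathfrak P$ is constant. Hence $\sum_j\bar\alpha_j^{\,m}\,\overline{(P_j-P_j(0))}(m)=0$ in $\mathbb F_p$ for all $m\in\{1,\dots,p-1\}$. Choosing $p$ sufficiently large (infinitely many such split $p$ exist, by Chebotarev), this run has length $p-1>\deg Q\ge\sum_j(\deg\overline{(P_j-P_j(0))}+1)$, so linear independence of exponential polynomials over $\mathbb F_p$ (a Casoratian/annihilator argument with the distinct bases $\bar\alpha_j$) forces $\overline{(P_j-P_j(0))}=0$; since $P_j-P_j(0)$ is a fixed polynomial divisible by $\mathfrak P$ for infinitely many $\mathfrak P$, it is $0$. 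Thus every $P_j\equiv A_j\in L^{\times}$ is constant, i.e.\ all poles of $F$ are simple.

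\emph{Poles are roots of unity, residues are rational.} Now $a_n=\sum_jA_j\alpha_j^{\,n}$ (add the primes dividing the $A_j$ to $\Sigma$). With $\mathfrak P$ as above, the $j$‑th term of the collapsed congruence is $A_j\alpha_j^{\,p^{r-1}m}\bigl(1-\alpha_j^{(p-1)p^{r-1}m}\bigr)$, which is $0$ if $\alpha_j^{p-1}=1$ and otherwise has $\mathfrak P$‑valuation $e_j+r-1$, where $e_j=v_{\mathfrak P}(\alpha_j^{p-1}-1)\ge 1$, by the lifting‑the‑exponent identity $v_{\mathfrak P}\bigl((\alpha_j^{p-1})^{p^{r-1}m}-1\bigr)=e_j+(r-1)$ for $p\nmid m$ (with the usual $p=2$ adjustment). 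If some $\alpha_j$ is not a root of unity, then for every $j$ with $\alpha_j^{p-1}\neq 1$ put $r=\max_j e_j$; all these terms are then visible ($e_j+r-1<2r$), and dividing by $p^{V}$ with $V=\min_j(e_j+r-1)$ and reducing mod $\mathfrak P$ leaves $\sum_j\bar A_j\,u_j\,m\,\bar\alpha_j^{\,m}=0$ with $u_j\in\mathbb F_p^{\times}$; factoring out the unit $m$, the Vandermonde determinant of the distinct $\bar\alpha_j$ forces $\bar A_j=0$, contradicting $\mathfrak P\notin\Sigma$. Hence every $\alpha_j$ is a root of unity. Finally, suppose $\sigma(A_j)\neq A_j$ for some $\sigma\in\mathrm{Gal}(L/\mathbb Q)$. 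By Chebotarev there are infinitely many primes $\mathfrak p\notin\Sigma$ of $L$ with $\frob_{\mathfrak p}=\sigma$ above a rational prime $p$ coprime to all orders of the $\alpha_j$; for such $\mathfrak p$ one has $\frob_{\mathfrak p}(\alpha_j)=\alpha_j^{\,p}$, so the $2$‑congruence at $r=1$ reads $\sum_j\bigl(\sigma(A_j)-A_j\bigr)\alpha_j^{\,pm}\equiv 0\pmod{\mathfrak p}$; as $m$ varies, the Vandermonde determinant in the distinct units $\bar\alpha_j^{\,p}$ forces $\sigma(A_j)-A_j\equiv 0\pmod{\mathfrak p}$, which fails for all but finitely many $\mathfrak p$ since $\sigma(A_j)-A_j$ is a fixed nonzero element of $\mathcal O_L$ — a contradiction. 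Hence $A_j\in\mathbb Q$, and \eqref{eq: general rational 2-function}–\eqref{explicit coefficients} follow.

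\emph{Main obstacle.} The real difficulty is that a single congruence entangles all poles together with their polynomial factors, so no pole can be isolated outright. The resolution is a three‑fold device: reduce to completely split primes to remove the Frobenius twist; choose the exponent $r$ for each such prime so that the relevant terms actually contribute modulo $\mathfrak P^{2r}$ — it is precisely the \emph{extra} factor $r$ in this modulus, beyond the ``obvious'' $\mathfrak P^{r}$, that a nonconstant $P_j$ or a nontrivial $\alpha_j^{p-1}-1$ cannot survive; and then decouple the poles by linear independence of exponential polynomials over $\mathbb F_p$ for $p$ large. The remaining technical points — the Fatou step, the $p=2$ case of lifting‑the‑exponent, and the finitely many ramified primes — are routine.
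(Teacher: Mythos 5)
Your proposal is correct in substance, and the decisive step is argued by a genuinely different route than the paper's. The paper first reduces to simple poles with rational residues via Minton's theorem (\Cref{rational 1-functions}) and then shows the $\alpha_i$ are roots of unity through \Cref{main theorem 2}, whose proof rests on two stability statements for the error terms $\rho_{i,n}(m)=p^{-n}\bigl(x_i^{mp^n}-x_i^{mp^{n-1}}\bigr)$ (\Cref{prop: sebulba1} and \Cref{prop: sebulba2}: their valuation $\kappa_i$ is independent of $n$ and of $m$ prime to $p$), followed by the iteration $p\sigma_{n+1}(m)\equiv\sigma_n(m)$ that culminates in the impossible inequality $\kappa\ge 1+2\kappa$. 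You instead fix one completely split prime outside a finite bad set, put $e_j=v_{\mathfrak P}(\alpha_j^{p-1}-1)$ for those $\alpha_j$ that are not roots of unity, choose the single exponent $r=\max_j e_j$, and use lifting-the-exponent to see that each such term has exact valuation $e_j+r-1<2r$; dividing by the minimal valuation and reducing modulo $\mathfrak P$ then yields a Vandermonde contradiction in one stroke. This is shorter and replaces the paper's propagation analysis of $\rho_{i,n}(m)$ by the standard LTE identity, while the paper's route records that propagation information explicitly. Your re-derivation of the simple-pole and rationality statements (linear independence of exponential polynomials mod $p$, then Frobenius $=\sigma$ plus a Vandermonde in the $\bar\alpha_j^{\,p}$) is a variant of the paper's Section 4 argument, which instead uses the Cartier operator and the congruence $\binom{pk+\nu}{\nu}\equiv 1\bmod p$; the Chebotarev/split-prime framework is the same.

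Two points should be tightened. First, you tacitly assume $F$ has no polynomial part: the identity $a_n=\sum_j P_j(n)\alpha_j^{\,n}$ for all $n\ge 1$ requires $\deg P\le\deg Q$ (up to an additive constant), which is not automatic; the paper imports this from \cite[Prop.~3.5]{beu17} in the proof of \Cref{rational 1-functions}. Your congruence arguments are untouched by a polynomial part (all indices used are $\ge p$), but the final formula for $F$ would then be off by a polynomial, so either quote that proposition or finish by noting that the root-of-unity part satisfies $\frob_p\bigl(b_{p^{r-1}m}\bigr)=b_{p^{r}m}$ exactly, whence the leftover coefficients $g_n$ obey $\frob_p(g_n)\equiv g_{pn}=0\bmod p$ for all large unramified $p$ and must vanish. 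Second, after dividing by $p^{V}$ and reducing mod $\mathfrak P$, the coefficients $u_j$ are units only for the indices with $e_j$ minimal; the terms with $e_j>\min_j e_j$ reduce to $0$, so the displayed sum should run over the minimal-valuation indices. The contradiction survives, since that index set is nonempty and the Vandermonde system forces $\bar A_j\bar u_j=0$ there with both factors nonzero, but the claim that $u_j\in\mathbb F_p^{\times}$ for all $j$ is not correct as written. (The Fatou step is harmless but unnecessary: all you need is that the finitely many nonzero quantities involved are units outside a finite set of primes.)
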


The first reduction in the proof of \Cref{main theorem} is given by \Cref{rational 1-functions}, a statement due to Minton (cf. \cite{min14}).
It states, that the generating functions of Euler sequences are given by sums of logarithmic derivatives of polynomials with integral coefficients.

\subsection*{Acknowledgment}
The author is grateful to Johannes Walcher for providing the initial motivation for this work.

\subsection*{Notation}
Throughout this paper, the natural numbers will be meant to be the set of all positive integers, $\mathbb{N}= \{ 1, 2, . . .\}$, while $\mathbb{N}_0 = \mathbb{N}\cup \{0\}$.
If $X$ is a set, then $X^{\mathbb{N}}$ denotes the set of all sequences indexed by the natural numbers, $(x_n)_{n \in \mathbb{N}}\in X^{\mathbb{N}}$.
For a ring $R$ let $R\llbracket z \rrbracket$ denote the ring of formal power series in the variable $z$ with coefficients in $R$.

\section{Preliminaries}\label{Preliminaries}

In this section, we introduce the definitions and notational conventions that will
be used throughout the paper.
We mainly follow the conventions given in \cite{walcher16}.
\\

Let $K$ be a fixed algebraic number field.
Denote by $\mathcal{O}$ the ring of integers of $K$.
Let $D$ be the discriminant of $K | \mathbb{Q}$.
We say that a prime $p\in \mathbb{Z}$ is unramified in $K| \mathbb{Q}$ if all prime ideals $\mathfrak{p} \mid p\mathcal{O}$  are unramified.
Note that an unramified prime $p$ is characterized by the property that $p\nmid D$.
For any prime ideal $\mathfrak{p}$, $\mathcal{O}_\mathfrak{p}$ denotes the ring of $\mathfrak{p}$-adic integers.
Then $\mathcal{O}_{\mathfrak{p}}$ is an integral domain and its field of fractions $K_{\mathfrak{p}} = \mathrm{Quot}(\mathcal{O}_{\mathfrak{p}})$ is the $\mathfrak{p}$-adic completion of $K$.
For an unramified prime $p$, we set $\mathcal{O}_p$ to be given by
\begin{align*}
	\mathcal{O}_p = \prod_{\mathfrak{p} \mid (p)} \mathcal{O}_\mathfrak{p}.
\end{align*}
Analogously,
\begin{align*}
	K_p=\prod_{\mathfrak{p} \mid (p)} K_\mathfrak{p}.
\end{align*}
Multiplication is realized by component-wise multiplication, that is, for $(x_\mathfrak{p})_{\mathfrak{p}\mid (p)}$ and $(y_{\mathfrak{p}})_{\mathfrak{p}\mid (p)} \in \mathcal{O}_p$ (resp. $K_p$) we have $(x_\mathfrak{p})_{\mathfrak{p}\mid (p)}\cdot (y_{\mathfrak{p}})_{\mathfrak{p}\mid (p)}= (x_\mathfrak{p}\cdot y_{\mathfrak{p}})_{\mathfrak{p}\mid (p)}\in \mathcal{O}_p$ ($K_p$ resp.).
Therefore, $K_p$ is a $K$-algebra.
Let $\iota_\mathfrak{p}\colon K \hookrightarrow K_{\mathfrak{p}}$ be the canonical embeddings of $K$ into its $\mathfrak{p}$-adic completion, then $K$ is embedded in $K_p$ by the map $\iota_p \colon K\hookrightarrow K_p$, $x \mapsto (\iota_{\mathfrak{p}}(x))_{\mathfrak{p}\mid (p)}$.
Nonetheless, if it is clear from the context, we will use the same symbol $x$ for $\iota_p(x)$ or $\iota_\mathfrak{p}(x)$, whenever $x\in K$.
We say that $x \in K$ is a \textit{$p$-adic integer} (\textit{$p$-adic unit} resp.), if $x \in \mathcal{O}_{\mathfrak{p}}$ ($x \in \mathcal{O}_{\mathfrak{p}} ^\times$ resp.) with respect to all prime ideals $\mathfrak{p}\mid (p)$.

$\iota_\mathfrak{p}$ ($\iota_p$ resp.) can be extended to the ring of formal power series $K_\mathfrak{p}\llbracket z\rrbracket$ ($K_p \llbracket z \rrbracket$ resp.) by setting $\iota_\mathfrak{p}(z)=z$ and $\iota_p (z)=z$ and linear extending to maps $\iota_\mathfrak{p}\colon K\llbracket z \rrbracket \hookrightarrow K_\mathfrak{p}\llbracket z\rrbracket$ and $\iota_p\colon K \llbracket z \rrbracket \hookrightarrow K_p \llbracket z \rrbracket$.
Again, for $V\in K\llbracket z \rrbracket$, we will use the same symbol $V$ to refer to the power series $\iota_\mathfrak{p}(V) \in K_\mathfrak{p} \llbracket z \rrbracket$ and $\iota_p (V) \in K_p \llbracket z \rrbracket$.

For $\mathfrak{p}\mid (p)$, the \textit{Frobenius element} $\mathrm{Fr}_{\mathfrak{p}}$ at $\mathfrak{p}$ is the unique element satisfying the following two conditions:
$\mathrm{Fr}_{\mathfrak{p}}$ is an element in the decomposition group $D(\mathfrak{p}) \subset \mathrm{Gal}(K/\mathbb{Q})$ of $\mathfrak{p}$ and for all $x \in \mathcal{O}$, $\mathrm{Fr}_{\mathfrak{p}}(x)\equiv x^p \mod \mathfrak{p}$.
By Hensel's Lemma, $\mathrm{Fr}_{\mathfrak{p}}$ can be lifted to $\mathcal{O}_{\mathfrak{p}}$ and then extended to an automorphism $\mathrm{Frob}_\mathfrak{p}\colon K_\mathfrak{p}\rightarrow K_{\mathfrak{p}}$.
We obtain an isomorphism, denoted by $\mathrm{Frob}_p \colon K_p \rightarrow K_p$, and given by $\mathrm{Frob}_p = (\mathrm{Frob}_ {\mathfrak{p}} )_ {\mathfrak{p} \mid (p)}$.
By declaring $\mathrm{Frob}_p(z)=z$, $\mathrm{Frob}_p$ can be (linearly) extended to an endomorphism $\mathrm{Frob}_p\colon K_p\llbracket z\rrbracket \rightarrow K_p \llbracket z\rrbracket$.

In the following, let $R$ be a $\mathbb{Q}$-algebra.
The \textit{logarithmic derivative} $\delta_R$ is an operator $\delta_R \colon R \llbracket z \rrbracket \rightarrow R \llbracket z \rrbracket$, where $\delta_R$ is given by $z\frac{\mathrm{d}}{\mathrm{d}z}$, i.e.
\begin{align*}
	\delta_R \left[ \sum_{n=0}^\infty r_n z^n\right]
	= \sum_{n=0}^\infty n r_n z^n.
\end{align*}
Its (partial) inverse of $\delta_R$ is the \textit{logarithmic integration} $\smallint_R \colon z R \llbracket z \rrbracket \rightarrow z R \llbracket z \rrbracket$ given by
\begin{align*}
	\smallint\hspace{-0.35em}\,_{R}\left[ \sum_{n=1}^\infty r_n z^n \right]
	= \sum_{n=1}^\infty \frac{r_n} n z^n\qquad\text{and}\qquad \smallint\hspace{-0.35em}\,_R(0)=0.
\end{align*}
For a number $k\in \mathbb{N}$ let $\mathscr{C}_{R,k}$ be the operator $\mathscr{C}_{R,k}\colon R \llbracket z \rrbracket \rightarrow R \llbracket z \rrbracket$, called the \textit{Cartier operator}, given by
\begin{align*}
	\mathscr{C}_{R,k}\left[\sum_{n=0}^\infty r_nz^n\right]
	=\sum_{n=0}^\infty r_{kn}z^n.
\end{align*}
For a number $\ell \in \mathbb{N}$, let $\varepsilon_{R,\ell} \colon R \llbracket z \rrbracket \rightarrow R \llbracket z \rrbracket$ be the $R$-algebra homomorphism uniquely determined by setting
\begin{align*}\varepsilon_{R,\ell} (z) = z^\ell.
\end{align*}
Hereafter, we will omit $R$ from the notation of $\delta_R$, $\smallint_R$, $\mathscr{C}_{R,k}$ and $\varepsilon_{R,\ell}$.
In \cite{walcher16}, an \textit{$s$-function with coefficients in $K$} (for $s \in \mathbb{N}$) is defined to be a formal power series $\widetilde V \in z K\llbracket z \rrbracket$ such that for every unramified prime $p\in \mathbb{Z}$ in $K | \mathbb{Q}$ we have
\begin{align}\label{eq: s-function}
	\frac1{p^s}\mathrm{Frob}_p \widetilde V(z^p) - \widetilde V(z)
	\in z \mathcal{O}_p \llbracket z \rrbracket.
\end{align}
A sequence $(a_n)_{n\in \mathbb{N}}\in K^{\mathbb{N}}$ is said to satisfy the \textit{local $s$-function property for $p$}, if $p \in \mathbb{Z}$ is unramified in $K|\mathbb{Q}$, and $a_n \in \mathcal{O}_p$ is a $p$-adic integer for all $n \in \mathbb{N}$, and
\begin{align}\label{eq: s-sequences}
	\mathrm{Frob}_p \left( a_{mp^{r-1}} \right)
	\equiv a_{mp^r} \mod p^{sr}\mathcal{O}_p,
\end{align}
for all $m,r\in \mathbb{N}$. $(a_n)_{n \in \mathbb{N}}$ is called an \textit{$s$-sequence} if it satisfies the local $s$-function property for all unramified primes $p$ in $K|\mathbb{Q}$.
By definition, it is evident that the coefficients of an $s$-sequence are contained in $\mathcal{O}\left[ D^{-1} \right]$.
The coefficients of $s$-functions are given by an $s$-sequence after applying $s$-fold logarithmic derivation, which is the statement of the equivalence \Cref{s-sequence/function correspondence} \textit{(i)} $\Leftrightarrow$ \textit{(ii)}, see also \cite[Lem. 4]{walcher16}.
We denote by $\mathcal{S}^s(K|\mathbb{Q})\subset z \mathcal{O}\left[D^{-1}\right]\llbracket z\rrbracket$ the set of all generating functions of $s$-sequences with coefficients in $K$
\begin{align*}
	\mathcal{S}^s(K|\mathbb{Q}):= \left\{V \in z\mathcal{O}\left[D^{-1}\right]\llbracket z \rrbracket;\, V=\sum_{n=1}^\infty a_n z^n, \text{ where $(a_n)_{n\in\mathbb{N}}$ is an $s$-sequence}\right\}.
\end{align*}
Furthermore, $\overline{\mathcal{S}}^s (K|\mathbb{Q})\subset z K\llbracket z \rrbracket$ denote the set of formal power series which diver from diver of being an element in $\mathcal{S}^s (K|\mathbb{Q})$ by a rational constant, i.e.
\begin{align*}
\overline{\mathcal{S}}^s(K|\mathbb{Q}):= \left\{V \in zK\llbracket z \rrbracket;\,\text{there is a constant $C\in \mathbb{N}$, such that $C V(z)\in \mathcal{S}^s(K|\mathbb{Q})$}\right\}.
\end{align*}
Let $S$ be a finite set consisting of prime numbers, then
\begin{align*}
\mathcal{S}^s(K|\mathbb{Q})_S := \left\{V \in z\mathcal{O}\left[D^{-1}, q^{-1}; q\in S\right]\llbracket z \rrbracket;\, V=\sum_{n=1}^\infty a_n z^n, \text{ where $(a_n)_{n\in\mathbb{N}}$}\right. \\ \left. \vphantom{\sum_{n=1}^\infty}\text{ satisfies the local $s$-function property for all unramified $p\not\in S$ }\right\}.
\end{align*}
Also,
\begin{align*}
\mathcal{S}^s(K|\mathbb{Q})_{\mathrm{fin}} = \bigcup_{S} \mathcal{S}^s(K|\mathbb{Q})_S,
\end{align*}
where $S$ runs through all finite subsets of rational primes.
Analogously, the sets $\overline{\mathcal{S}}^s(K|\mathbb{Q})_S$ and $\overline{\mathcal{S}}^s (K|\mathbb{Q})_{\mathrm{fin}}$ are defined.
Naturally, we obtain the sequence
\begin{align}\label{eq: descending chain s-functions}
	\mathcal{S}^1(K|\mathbb{Q})\supset
	\mathcal{S}^2(K|\mathbb{Q})\supset \cdots \supset
	\mathcal{S}^{s-1}(K|\mathbb{Q})\supset
	\mathcal{S}^{s}(K|\mathbb{Q})\supset
	\mathcal{S}^{s+1}(K|\mathbb{Q})\supset \cdots
\end{align}

\begin{proposition}\label{s-sequence/function correspondence}
Let $s \in \mathbb{N}$. Then the following is equivalent:
\begin{enumerate}[(i)]
\item
$V\in \mathcal{S}^s(K|\mathbb{Q})$,
\item
$\smallint\hspace{-0.25em}\,^s V$ is an $s$-function,
\item
for all unramified primes $p$ in $K|\mathbb{Q}$ and all $r\in \mathbb{N}$,
\begin{align}\label{eq: definition by cartier operator}
\mathscr{C}_p^{r-1} \left( \mathrm{Frob}_p V(z) - \mathscr{C}_p V(z) \right) &\equiv 0 \mod p^{sr}\mathcal{O}_p\llbracket z\rrbracket,\quad \text{and}\nonumber\\ 
V(z) - \varepsilon_p \mathscr{C}_p V(z) &\in z \mathcal{O}_p\llbracket z \rrbracket.
\end{align}

\item
There is a sequence $b\in K^\mathbb{N}$ satisfying
\begin{align*}
\sum_{i=1}^{\ord_p(n)} \frac{\frob_p(b_{n/p^i}) - b_{n/p^i}}{p^{si}} - b_n \in \mathcal{O}_p
\end{align*}
for all $n\in\mathbb{N}$ and unramified $p$ in $K|\mathbb{Q}$, such that $\smallint\hspace{-0.25em}\,^s V(z)$ can be represented as a formal sum of polylogarithms in the following way
\begin{align*}
\smallint\hspace{-0.25em}\,^s(V(z))
= \sum_{n=1}^\infty b_n \Li_s(z^n).
\end{align*}
\item
There is a sequence $q \in \mathcal{O}\left[ D^{-1}\right]^{\mathbb{N}}$ satisfying
\begin{align*}
\sum_{d\mid n} \frac{\frob_p\left( q_{n/d}^d \right) - q_{n/d}^{pd}}d - p \sum_{\substack{d\mid n \\ p\nmid d}} \frac{q_{n/d}^d}d
\equiv 0\mod p^{(s-1)\ord_p(n)+s} \mathcal{O}_p
\end{align*}
for all $n\in \mathbb{N}$ and unramified $p$ in $K|\mathbb{Q}$, such that $\smallint\hspace{-0.25em}^s V(z)$ can be represented as a formal sum of polylogarithms in the following way
\begin{align*}
	\smallint\hspace{-0.25em}\,^s V(z)
	= \sum_{d=1}^\infty \frac1{d^{s-1}} \Li_s(q_d z^d)
\end{align*}
\end{enumerate}
\end{proposition}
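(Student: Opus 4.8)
The plan is to run the chain of equivalences through the single defining congruence $\mathrm{Frob}_p(a_{mp^{r-1}})\equiv a_{mp^r}\pmod{p^{sr}\mathcal{O}_p}$, read off coefficient by coefficient, and then transport it across the two polylogarithm representations by M\"obius inversion. For (i) $\Leftrightarrow$ (ii) (cf.\ \cite[Lem.~4]{walcher16}) I would unravel the $s$-function property of $\smallint^s V(z)=\sum_n a_n n^{-s}z^n$: writing $\varepsilon_p$ for $z\mapsto z^p$, the coefficient of $z^{mp^r}$ with $p\nmid m$ and $r\ge 1$ in $p^{-s}\mathrm{Frob}_p\varepsilon_p\smallint^s V-\smallint^s V$ equals $(mp^r)^{-s}\bigl(\mathrm{Frob}_p(a_{mp^{r-1}})-a_{mp^r}\bigr)$, while for $p\nmid n$ the coefficient of $z^n$ is $-n^{-s}a_n$; since $m^s$ and $n^s$ are $p$-adic units, membership in $z\mathcal{O}_p\llbracket z\rrbracket$ is precisely integrality of the prime-to-$p$ coefficients together with the displayed congruences, and an induction on $\ord_p(n)$ upgrades the former to $a_n\in\mathcal{O}_p$ for every $n$. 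The equivalence (i) $\Leftrightarrow$ (iii) is the same bookkeeping rephrased through the Cartier operator: $\mathscr{C}_p^{\,r-1}(\mathrm{Frob}_p V-\mathscr{C}_p V)$ has $z^n$-coefficient $\mathrm{Frob}_p(a_{p^{r-1}n})-a_{p^r n}$, and $V-\varepsilon_p\mathscr{C}_p V=\sum_{p\nmid n}a_n z^n$, so \eqref{eq: definition by cartier operator} is literally the family of $s$-sequence congruences plus integrality of the prime-to-$p$ coefficients, and the same induction closes the gap.

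For (ii) $\Leftrightarrow$ (iv): the $z^N$-coefficient of $\sum_n b_n\Li_s(z^n)$ is $N^{-s}\sum_{n\mid N}b_n n^s$, so $(b_n)\mapsto(a_N)$ with $a_N=\sum_{n\mid N}b_n n^s$ is Dirichlet convolution of $(b_n n^s)$ with the all-ones sequence and hence invertible over $K$; consequently $\smallint^s V$ admits a unique representation $\sum_n b_n\Li_s(z^n)$, with $b_n=n^{-s}\sum_{d\mid n}\mu(n/d)a_d$, and since (iv) only asks for $b\in K^{\mathbb{N}}$ this is the $b$ to test. Using $\mathrm{Frob}_p\Li_s(z^n)=\Li_s(z^n)$ and $\varepsilon_p\Li_s(z^n)=\Li_s(z^{pn})$ one gets $p^{-s}\mathrm{Frob}_p\varepsilon_p\smallint^s V-\smallint^s V=\sum_n\gamma_n\Li_s(z^n)$, where $\gamma_n=p^{-s}\mathrm{Frob}_p(b_{n/p})-b_n$ if $p\mid n$ and $\gamma_n=-b_n$ otherwise; extracting the $z^{mp^v}$-coefficient of this series ($p\nmid m$) and cancelling the unit $m^{-s}$ leaves $\sum_{e\mid m}e^s G^{(e)}_v$, in which $G^{(e)}_v$ is exactly $\sum_{i=1}^{\ord_p(n)}p^{-si}\bigl(\mathrm{Frob}_p(b_{n/p^i})-b_{n/p^i}\bigr)-b_n$ evaluated at $n=ep^v$. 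Hence $\smallint^s V$ is an $s$-function iff $\sum_{e\mid m}e^s G^{(e)}_v\in\mathcal{O}_p$ for all unramified $p$, all $p\nmid m$ and all $v\ge 0$; M\"obius inversion in $m$ (using $e^s\in\mathcal{O}_p^\times$ for $p\nmid e$) converts this to $G^{(e)}_v\in\mathcal{O}_p$ for all $e,v$, which is the condition in (iv).

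For (ii) $\Leftrightarrow$ (v): the identity $a_N=\sum_{d\mid N}d\,q_d^{N/d}$ coming from $\smallint^s V=\sum_d d^{-(s-1)}\Li_s(q_d z^d)$ is the big Witt relation between $(q_d)$ and $(a_N)$; it is solvable recursively for a unique $q\in K^{\mathbb{N}}$ via $Nq_N=a_N-\sum_{d\mid N,\,d<N}d\,q_d^{N/d}$, and it yields $\smallint^1 V=-\sum_d\log(1-q_d z^d)$, so the $q_d$ are the Minton data of $V$, whose $S$-integrality is controlled by \Cref{rational 1-functions}. To compare the displayed $q$-congruence with (ii), I would reindex its left-hand side by $e=n/d$, multiply through by $n$ and set $n=mp^v$ with $v=\ord_p(n)$: sorting divisors by their $p$-part and using $a_N=\sum_{d\mid N}d\,q_d^{N/d}$, it collapses to $\mathrm{Frob}_p(a_n)-a_{pn}+p^{v+1}\sum_{e'\mid m}e'\bigl(q_{p^{v+1}e'}^{m/e'}-q_{p^{v}e'}^{m/e'}\bigr)$, and the modulus $p^{(s-1)\ord_p(n)+s}$ becomes, after the extra factor $p^{v}$ from $n=mp^v$, exactly $p^{s(v+1)}$ --- the $s$-sequence modulus for the pair $(mp^v,mp^{v+1})$. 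It then remains to show the $q$-correction term lies in $p^{s(v+1)}\mathcal{O}_p$ precisely when $(a_n)$ is an $s$-sequence; for this one feeds the recursion for $q_d$ back in and runs an induction on $v$ (with a M\"obius step in $m$ inside), using Dwork-type congruences for the $p$-power maps --- which is also the source of the combination $\mathrm{Frob}_p(q_{n/d}^d)-q_{n/d}^{pd}$ in the statement --- to match exact $p$-adic valuations.

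I expect (i)--(iv) to be routine: once $\mathrm{Frob}_p\Li_s(z^n)=\Li_s(z^n)$ and $\varepsilon_p\Li_s(z^n)=\Li_s(z^{pn})$ are recorded, everything is coefficient bookkeeping and M\"obius inversion. The genuine obstacle is (v): since $(q_d)\mapsto(a_n)$ is nonlinear of Witt/necklace type, plain M\"obius inversion no longer isolates the individual congruences, so one has to track how $\mathrm{Frob}_p$ acts on the powers $q_d^k$; the combination $\mathrm{Frob}_p(q_{n/d}^d)-q_{n/d}^{pd}$ and the exponent $(s-1)\ord_p(n)+s$ in the statement point to Dwork's lemma on $p$-power congruences, applied termwise in the divisor sum, as the tool that has to be made precise there.
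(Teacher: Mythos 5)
Your treatment of (i) $\Leftrightarrow$ (ii) $\Leftrightarrow$ (iii) $\Leftrightarrow$ (iv) is correct and essentially the paper's own argument: coefficient-by-coefficient bookkeeping of $p^{-s}\frob_p\smallint^s V(z^p)-\smallint^s V(z)$, the Cartier-operator rephrasing, and the relation $a_N=\sum_{d\mid N}d^s b_d$ together with M\"obius inversion. Routing (iv) through (ii) rather than through (i) is only a cosmetic difference, and your remark that integrality of the prime-to-$p$ coefficients propagates to all $a_n$ by induction on $\ord_p(n)$ is a point the paper leaves implicit.

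The genuine gap is in (v), and it is twofold. First, the required integrality $q\in\mathcal{O}\left[D^{-1}\right]^{\mathbb{N}}$ cannot be ``controlled by \Cref{rational 1-functions}'': that theorem concerns the pole structure of \emph{rational} $1$-functions and says nothing about the Witt data of a general power series $V$; the statement you actually need is Dwork's lemma (\Cref{characterization of 1-functions}), but in this paper that lemma is itself deduced from \Cref{s-sequence/function correspondence}(v), so invoking it here would be circular. The paper instead extracts the integrality directly from the defining recursion for $q_n$. Second, and more seriously, your reduction of the congruence in (v) to the $s$-sequence congruence stops at exactly the decisive point: your (correct) computation shows that $n$ times the left-hand side of (v), taken literally, equals $\frob_p(a_n)-a_{pn}+p^{v+1}\sum_{e'\mid m}e'\bigl(q_{p^{v+1}e'}^{m/e'}-q_{p^{v}e'}^{m/e'}\bigr)$, so the equivalence hinges on that correction term lying in $p^{s(v+1)}\mathcal{O}_p$; you only assert that an ``induction on $v$ with a M\"obius step'' and ``Dwork-type congruences'' will give this, without proving it, and such congruences for the $q_d$ are not available at this stage. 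The paper has no residual term at all: its proof rests on the exact identity $\frob_p(a_n)-a_{pn}=n\bigl[\sum_{d\mid n}\frac{\frob_p(q_{n/d}^d)-q_{n/d}^{pd}}{d}-p\sum_{d\mid n,\,p\nmid d}\frac{q_{pn/d}^{d}}{d}\bigr]$ (the second sum carries the index $pn/d$; the $n/d$ appearing in the printed statement is a slip), after which (i) $\Leftrightarrow$ (v) is a one-line modulus count, since dividing by $n$ turns $p^{s(\ord_p(n)+1)}$ into $p^{(s-1)\ord_p(n)+s}$. Your leftover sum is precisely the discrepancy created by taking the misprinted index at face value; either correct the index and recover the exact identity, or supply an actual proof of the divisibility of the correction term --- as it stands, part (v) of your proposal is a plan, not a proof.
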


\begin{proof}
Write $a_n:= [V(z)]_n$ for all $n\in \mathbb{N}$.
\begin{itemize}
\item[\textit{(i)} $\Leftrightarrow$ \textit{(ii)}:]
Compute
\begin{align*}
\frac1{p^s}\mathrm{Frob}_p\smallint\hspace{-0.25em}\,^s V (z^p) - \smallint \hspace{-0.25em}\,^s V(z)
&= \frac1{p^s} \sum_{n=1}^\infty \frac{\mathrm{Frob}_p (a_n)}{n^s} z^{pn} - \sum_{n=1}^\infty \frac{a_n}{n^s} z^n\\
&= -\sum_{\substack{n=1\\ p\nmid n}}^\infty \frac{a_n}{n^s} z^n + \sum_{n=1}^\infty \frac{\mathrm{Frob}_p (a_n)- a_{pn}}{p^s n^s} z^{pn}
\end{align*}
Note that the $p$-adic integrality of the first sum is not disturbed by the denominators $n^s$, since their $p$-adic oder is $0$.
Therefore, the equivalence $V \in \mathcal{S}^s(K|\mathbb{Q})$ if and only if $\smallint\hspace{-0.25em}\,^s V(z)$ follows immediately.
\item[\textit{(i)} $\Leftrightarrow$ \textit{(iii)}:]
Let $p$ be unramified in $K|\mathbb{Q}$ and $r\in \mathbb{N}$.
Then
\begin{align*}
\mathscr{C}_p^{r-1}\left(\mathrm{Frob}_p V(z) - \mathscr{C}_p V(z)\right)
&= \mathscr{C}_p^{r-1}\left( \sum_{n=1}^\infty (\mathrm{Frob}_p (a_n) - a_{pn}) z^n \right)\\
&= \sum_{n=1}^\infty \left( \mathrm{Frob}_p\left( a_{p^{r-1}n} \right) - a_{p^r n} \right)z^n,
\end{align*}
and
\begin{align*}
V(z)- \varepsilon_p \mathscr{C}_p V(z)= \sum_{\substack{n=1\\ p\nmid n}}^\infty a_n z^n.
\end{align*}
The condition that $V- \varepsilon_p \mathscr{C}_p V \in \mathcal{O}_p\llbracket z \rrbracket$  is then equivalent to saying that every coefficient whose index is not a multiple of $p$ is a $p$-adic integer, $a_n \in \mathcal{O}_p$ for all $p\nmid n$.
Therefore, for all unramified primes $p$ in $K|\mathbb{Q}$ and $r\in \mathbb{N}$,
\begin{align*}
\mathscr{C}_p^{r-1} \left( \mathrm{Frob}_p V(z) - \mathscr{C}_p V(z) \right) &\equiv 0 \mod p^{sr} z \mathcal{O}_p\llbracket z\rrbracket,\quad \text{and}\\
V(z)- \varepsilon_p \mathscr{C}_p V(z) &\in z \mathcal{O}_p\llbracket z \rrbracket
\end{align*}
if and only if $V \in \mathcal{S}^s (K|\mathbb{Q})$.
\item[\textit{(iv) $\Rightarrow$ (i)}:]
Let $b\in K^{\mathbb{N}}$ such that
\begin{align*}
	\sum_{n=1}^\infty \frac{a_n}{n^s}z^n
	=\sum_{n=1}^\infty b_n\Li_s(z^n).
\end{align*}
By comparing coefficients, we can write equivalently for all $n\in \mathbb{N}$,
\begin{align*}
	a_n=n^s\sum_{d \mid n} \frac{b_d}{(n/d)^s}=\sum_{d\mid n} d^s b_d.
\end{align*}
Let us assume for all $n\in \mathbb{N}$ and all unramified  primes $p$ in $K|\mathbb{Q}$ that
\begin{align*}
\sum_{i=1}^{\ord_p(n)} \frac{\frob_p(b_{n/p^i}) - b_{n/p^i}}{p^{si}} - b_n \in \mathcal{O}_p.
\end{align*}
Write $n= mp^r$ for $m, r \in \mathbb{N}$ with $\gcd(p,m)=1$ (i.e. $\ord_p(n)=r$).
We then obtain
\begin{align*}
\frob_p(a_{mp^{r-1}})-a_{mp^r}\hspace{-5em}&\\
&=\sum_{d\mid \nicefrac np} d^s\frob_p(b_d)- \sum_{d\mid n}d^s b_d\\
&=\sum_{i=0}^{r-1}\sum_{d\mid m} (d p^i)^s \frob_p (b_{dp^i}) - \sum_{i=0}^r \sum_{d\mid m} (dp^i)^sb_{dp^i}\\
&=p^{sr} \sum_{d\mid m} d^s \left(\sum_{i=0}^{r-1} p^{(i-r)s} \mathrm{Frob}_p(b_{dp^i}) -\sum_{i=0}^r p^{(i-r)s} b_{dp^i} \right)\\
&= p^{sr} \sum_{d\mid m} d^s \left( \sum_{i=0}^{r-1} p^{-i-1} \left( \mathrm{Frob}_p(b_{dp^{r-i-1}} - b_{dp^{r-i-1}}\right) - b_{dp^r} \right)\\
&=p^{sr}\sum_{d\mid m}d^s \underbrace{\left(\sum_{i=1}^r \frac{ \frob_p(b_{d p^{r-i}} ) - b_{d p^{r-i}}}{p^{si}}-b_{dp^r} \right)}_{\in\mathcal{O}_p}\\
&\equiv 0\mod p^{sr}\mathcal{O}_p.
\end{align*}
Furthermore, if $r=0$, the sum $\displaystyle \sum_{i=1}^{\mathrm{ord}_p(n)} \frac{\mathrm{Frob}_p (b_{n/p^i}) - b_{n/p^i}}{p^{si}}$ is the empty sum, i.e. equals to $0$.
Therefore, $b_n\in \mathcal{O}_p$ whenever $\ord_p(n)=0$.
Consequently, $\displaystyle a_n = \sum_{d|n} d^s b_d \in \mathcal{O}_p$ in that case.
\item[\textit{(i) $\Rightarrow$ (iv)}:]
By the \textit{Möbius inversion formula} we have
\begin{align*}
	b_n=\frac1{n^s}\sum_{d\mid n}\mu\left(
	\frac nd\right) a_d.
\end{align*}
First assume $\ord_p(n)=0$.
Then $b_n$ is the sum of $p$-adic integers and therefore, $b_n$ is itself a $p$-adic integer.
Now we may assume $\ord_p(n)>0$.
Again, write $n= mp^r$ for $m, r \in \mathbb{N}$ with $\gcd (m,p)=1$ (i.e. $\ord_p(n) = r$).
Recall that $\mu(k)\neq 0$ if and only if $k$ is square-free, therefore,
\begin{align*}
	b_n=\frac1{n^s}
	\sum_{d\mid n}\mu\left(\frac nd\right)a_d
	=\frac1{m^sp^{sr}}
	\sum_{d\mid m}\mu\left(\frac md\right)
	(a_{dp^r}-a_{dp^{r-1}}).
\end{align*}
Hence,
\begin{align*}
\sum_{i=1}^r\frac{ \frob_p (b_{\nicefrac n{p^i}}) - b_{\nicefrac n{p^i}}} {p^{si}} -b_n\hspace{-9em}&\\
&= \frac1{m^s} \sum_{d\mid m} \mu \left( \frac md \right) \frac{ \frob_p \left( \displaystyle{\sum_{i=1}^r} (a_{dp^{r-i}} - a_{dp^{r-i-1}} )\right) - \displaystyle{\sum_{i=0}^r} (a_{dp^{r-i}} - a_{dp^{r-i-1}})}{p^{sr}}\\
&=\frac1{m^s} \sum_{d\mid m} \mu \left( \frac md \right) \frac{ \frob_p (a_{dp^{r-1}}) - a_{dp^r}}{p^{sr}}.
\end{align*}
Assuming $V\in \mathcal{S}^{s}(K|\mathbb{Q})$ therefore implies
\begin{align*}
\sum_{i=1}^r\frac{ \frob_p (b_{\nicefrac n{p^i}}) - b_{\nicefrac n{p^i}}} {p^{si}} -b_n \in \mathcal{O}_p.
\end{align*}
\item[\textit{(i)} $\Leftrightarrow$ \textit{(v)}:]
Find a sequence $q \in K^\mathbb{N}$, such that
\begin{align*}
	a_n = \sum_{d\mid n} \frac nd q_{n/d}^d.
\end{align*}
Indeed, $q_n$ can be defined recursively by
\begin{align*}
	q_n = a_n - \sum_{\substack{ d\mid n \\ d>1}} \frac nd q_{n/d}^d.
\end{align*}
Therefore, $q \in \mathcal{O}\left[ D^{-1} \right]^{\mathbb{N}}$ if and only if $a \in \mathcal{O}\left[ D^{-1} \right]^{\mathbb{N}}$.
We obtain
\begin{align*}
	\smallint\hspace{-0.25em}\,^s V(z) = \sum_{n=1}^\infty \frac{a_n}{n^s} z^n
	=\sum_{n=1}^\infty \sum_{d \mid n} \frac{q_{n/d}^d}{n^{s-1}d} z^n.
\end{align*}
By substitution $n \mapsto d m $ we obtain
\begin{align*}
\smallint\hspace{-0.25em}\,^s V(z) &=
\sum_{m=1}^\infty \sum_{d=1}^\infty \frac{q_m^d}{(dm)^{s-1} d} z^{dm}
=\sum_{m=1}^\infty \frac1{m^{s-1}} \sum_{d=1}^\infty \frac{q_m^d}{d^s} z^{dm}\\
	&= \sum_{m=1}^\infty\frac1{m^{s-1}} \Li_s\left( q_m z^m\right).
\end{align*}
Furthermore,
\begin{align*}
	\frob_p(a_n) - a_{np}&=
	n\left[ \sum_{d\mid n} \frac{\frob_p\left( q_{k/d}^d \right)}d - p\sum_{d\mid np} \frac{q_{np/d}^d}{d} \right]\\
	&= n\left[ \sum_{d\mid n} \frac{\frob_p\left( q_{n/d}^d \right) - q_{n/d}^{pd}}d - p \sum_{\substack{d\mid n \\ p\nmid d}} \frac{q_{n/d}^d}d \right].
\end{align*}
Hence, $V\in \mathcal{S}^s(K|\mathbb{Q})$ if and only if
\begin{align*}
	\sum_{d\mid n} \frac{\frob_p\left( q_{n/d}^d \right) - q_{n/d}^{pd}}d - p \sum_{\substack{d\mid n \\ p\nmid d}} \frac{q_{n/d}^d}d
	\equiv 0 \mod p^{(s-1)\ord_p(n)+s}\mathcal{O}_p.
\end{align*}
\end{itemize}
This completes the proof.
\end{proof}

\section{Dwork's Integrality Lemma}

Let us rephrase \textit{Dwork's Integrality Lemma} in the setting of $1$-functions as has been done in \cite{walcher16}:

\begin{theorem}[cf. Prop. 7 in \cite{walcher16}, Dwork's Integrality Lemma]\label{characterization of 1-functions}
Let $V \in zK\llbracket z \rrbracket$ and $Y \in 1 + z K\llbracket z \rrbracket$ be related by $V = \log Y$, $Y = \exp (V)$.
Then the following is equivalent
\begin{enumerate}[(i)]
\item
$V$ is a $1$-function.
\item
 There is a sequence $q\in \mathcal{O}\left[D^{-1}\right]^\mathbb{N}$ such that
 \begin{align*}
 \smallint V(z) = - \sum_{n=1}^\infty \log (1- q_nz^n).
 \end{align*}
\item
For every unramified prime $p$ in $K|\mathbb{Q}$,
\begin{align*}
\frac{\mathrm{Frob}_p(Y)(z^p)}{Y(z)^p} \in 1 + z p \mathcal{O}_p\llbracket z \rrbracket.
\end{align*}
\item
$Y \in 1+ z \mathcal{O}\left[ D^{-1} \right] \llbracket z \rrbracket$.
\end{enumerate}
\end{theorem}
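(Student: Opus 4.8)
The plan is to prove the chain of implications \textit{(i)} $\Rightarrow$ \textit{(ii)} $\Rightarrow$ \textit{(iv)} $\Rightarrow$ \textit{(iii)} $\Rightarrow$ \textit{(i)}, reducing the one genuinely nontrivial step to \Cref{s-sequence/function correspondence}. For \textit{(i)} $\Rightarrow$ \textit{(ii)} I will use that $\smallint \delta V = V$: the equivalence \textit{(i)} $\Leftrightarrow$ \textit{(ii)} of \Cref{s-sequence/function correspondence}, applied with $s=1$ to $\delta V$, says precisely that the hypothesis that $V$ is a $1$-function is the same as $\delta V \in \mathcal{S}^1(K|\mathbb{Q})$; then the equivalence \textit{(i)} $\Leftrightarrow$ \textit{(v)} of that same proposition, again with $s=1$, produces a sequence $q \in \mathcal{O}[D^{-1}]^{\mathbb{N}}$ with $\smallint^1(\delta V)(z) = \sum_{d\geq1}\Li_1(q_d z^d)$, and since $\smallint^1\delta V = V$ and $\Li_1(w) = -\log(1-w)$ this is exactly the representation $V(z) = -\sum_{n\geq1}\log(1-q_n z^n)$ asserted in \textit{(ii)}. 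This step carries the real arithmetic content of the theorem, and it rests entirely on the elementary recursion $q_n = a_n - \sum_{d\mid n,\, d>1}\tfrac{n}{d}q_{n/d}^d$ from the proof of \Cref{s-sequence/function correspondence} \textit{(v)}, which converts integrality of the underlying $1$-sequence $a_n = n\,[V(z)]_n$ into integrality of $q$.

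Next, for \textit{(ii)} $\Rightarrow$ \textit{(iv)} I will exponentiate: the sum $-\sum_{d\geq1}\log(1-q_d z^d)$ converges $z$-adically and $\exp$ is $z$-adically continuous, so $Y = \exp V = \prod_{d\geq1}(1-q_d z^d)^{-1}$; each factor $(1-q_d z^d)^{-1} = \sum_{k\geq0}q_d^k z^{dk}$ lies in $1 + z\mathcal{O}[D^{-1}]\llbracket z\rrbracket$ because $q_d \in \mathcal{O}[D^{-1}]$, and as only finitely many factors affect a given coefficient one gets $Y \in 1 + z\mathcal{O}[D^{-1}]\llbracket z\rrbracket$. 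For \textit{(iv)} $\Rightarrow$ \textit{(iii)} I will fix an unramified prime $p$, so that $p \nmid D$, hence $\mathcal{O}[D^{-1}] \subseteq \mathcal{O}_p$ and $Y \in 1 + z\mathcal{O}_p\llbracket z\rrbracket$; since $p$ is unramified, $\mathfrak{p}\mathcal{O}_{\mathfrak{p}} = p\mathcal{O}_{\mathfrak{p}}$ for $\mathfrak{p}\mid p$, so $x^p \equiv \mathrm{Frob}_p(x) \bmod p\mathcal{O}_p$ for $x \in \mathcal{O}_p$, and combining this with the multinomial congruence $\bigl(\sum_n r_n z^n\bigr)^p \equiv \sum_n r_n^p z^{pn} \bmod p\mathcal{O}_p\llbracket z\rrbracket$ gives $Y(z)^p \equiv \mathrm{Frob}_p Y(z^p) \bmod p\mathcal{O}_p\llbracket z\rrbracket$; dividing by the unit $Y(z)^p \in 1 + z\mathcal{O}_p\llbracket z\rrbracket$ then yields \textit{(iii)}.

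For \textit{(iii)} $\Rightarrow$ \textit{(i)} I will use that $\mathrm{Frob}_p$ fixes $\mathbb{Q}$, hence commutes with $\log$, together with $\log(Y^p) = p\log Y$, to rewrite $\tfrac{1}{p}\mathrm{Frob}_p V(z^p) - V(z) = \tfrac{1}{p}\log\bigl(\mathrm{Frob}_p Y(z^p)/Y(z)^p\bigr)$. The estimate $v_p(p^k/k) = k - v_p(k) \geq 1$ for all $k\geq1$ (valid since $p^{v_p(k)} \leq k$ forces $v_p(k) < k$) shows that $\log$ maps $1 + zp\mathcal{O}_p\llbracket z\rrbracket$ into $zp\mathcal{O}_p\llbracket z\rrbracket$, so applying this to \textit{(iii)} gives $\tfrac{1}{p}\mathrm{Frob}_p V(z^p) - V(z) \in z\mathcal{O}_p\llbracket z\rrbracket$ for every unramified $p$, which is the definition of $V$ being a $1$-function.

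I expect the only delicate ingredients to be the two auxiliary power-series facts about $\exp$ and $\log$ — that $\log$ maps $1 + zp\mathcal{O}_p\llbracket z\rrbracket$ into $zp\mathcal{O}_p\llbracket z\rrbracket$ and (used implicitly, through \Cref{s-sequence/function correspondence}) that $\exp$ maps $zp\mathcal{O}_p\llbracket z\rrbracket$ back into $1 + zp\mathcal{O}_p\llbracket z\rrbracket$ — both of which rely on Legendre's formula $v_p(k!) = (k - s_p(k))/(p-1)$, with $s_p$ the base-$p$ digit sum, and on a check that is uniform in $p$, in particular at $p=2$. Were one to avoid invoking \Cref{s-sequence/function correspondence}, the implication \textit{(iii)} $\Rightarrow$ \textit{(iv)} would instead have to be proved directly by the classical induction on the coefficients of $Y$, whose characteristic obstacle is that a single inductive step only yields $v_{\mathfrak{p}}(c_n) \geq -1$ and must then be bootstrapped to $v_{\mathfrak{p}}(c_n) \geq 0$; this bootstrapping is, in effect, exactly what the recursion behind \Cref{s-sequence/function correspondence} \textit{(v)} packages away.
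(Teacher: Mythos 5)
Your proposal is correct and essentially reproduces the paper's own argument: the same cycle \textit{(i)} $\Rightarrow$ \textit{(ii)} $\Rightarrow$ \textit{(iv)} $\Rightarrow$ \textit{(iii)} $\Rightarrow$ \textit{(i)}, with \Cref{s-sequence/function correspondence} (for $s=1$, applied to $\delta V$) giving \textit{(i)} $\Rightarrow$ \textit{(ii)}, the mod-$p$ multinomial congruence (the paper phrases it via Bell polynomials) together with $x^p\equiv \mathrm{Frob}_p(x) \bmod p\mathcal{O}_p$ giving \textit{(iv)} $\Rightarrow$ \textit{(iii)}, and the estimate $\mathrm{ord}_p\left(p^k/k\right)\geq 1$ applied to $\log$ giving \textit{(iii)} $\Rightarrow$ \textit{(i)}; the paper's only additional ingredient is a direct proof of \textit{(ii)} $\Rightarrow$ \textit{(i)} via Euler's theorem, which your cycle renders unnecessary. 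Your reading of \textit{(ii)} as $V(z)=-\sum_{n\geq 1}\log(1-q_nz^n)$ (rather than the literal ``$\smallint V$'' in the statement) is exactly how the paper itself uses \textit{(ii)} in its step \textit{(ii)} $\Rightarrow$ \textit{(iv)}, so this is the consistent interpretation.
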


\begin{proof}
Let $p$ be a prime unramified in $K\mid \mathbb{Q}$.
\begin{enumerate}

\item[\textit{(i) $\Leftrightarrow$ (ii)}]
Using the equivalence \Cref{s-sequence/function correspondence} \textit{(ii)} $\Leftrightarrow$ \textit{(v)} for $s=1$, the statement follows from
\begin{align}\label{eq: characterization of 1-functions}
\sum_{d\mid n} \frac{\frob_p\left( q_{n/d}^d \right) - q_{n/d}^{pd}}d - p \sum_{\substack{d\mid n \\ p\nmid d}} \frac{q_{n/d}^d}d
\equiv 0 \mod p\mathcal{O}_p.
\end{align}
If $p\nmid d$, then $p\frac{q_{n/d}^d}d\in p \mathcal{O}_p$. Therefore, \cref{eq: characterization of 1-functions} follows from Euler's Theorem, for all $q_{n/d} \in \mathcal{O} \left[ D^{-1} \right]$,
\begin{align*}
	\frob_p\left( q_{n/d}^d \right) - q_{n/d}^{pd}
	\equiv 0 \mod p^{\ord_p(d)+1} \mathcal{O}_p.
\end{align*}

\item[\textit{(ii) $\Rightarrow$ (iv)}]
Given \textit{(ii)}, we have
\begin{align*}
	Y(z)=\exp(V(z))=\prod_{d=1}^\infty(1-q_dz^d)^{-1}\in
	1+z\mathcal{O}\left[ D^{-1} \right] \llbracket z \rrbracket.
\end{align*}

\item[\textit{(iv) $\Rightarrow$ (iii)}]
Let $y\in \mathcal{O}\left[ D^{-1} \right] ^\mathbb{N}$  be given by the sequence
\begin{align*}
y:= ([Y(z)]_{n-1})_{n\in \mathbb{N}}.
\end{align*}
(The shift $n-1$ in the index above is due to the fact that $Y(z)$ has leading constant (zeroth) coefficient.)
Then $Y(z)^p$ can be expressed in terms of (partial) Bell polynomials as follows.
Using the convention $!y= (n! y_n)_{n\in \mathbb{N}}$, we obtain
\begin{align*}
Y(z)^p = \frac1{z^p}\left( \sum_{n=1}^\infty y_n z^n \right)^p = \sum_{n=p}^\infty \frac{p!}{n!} B_{n,p}(!y) z^{n-p}.
\end{align*}
Furthermore,
\begin{align*}
	\frac{p!}{n!} B_{n,p}(!y) = \sum_{\alpha \in \pi(n,p)} \binom{p}{\alpha_1, ..., \alpha_{n-p+1}} \prod_{i=1}^{n-p+1} y_i^{\alpha_i},
\end{align*}
where $\alpha \in \pi(n,p)\subset \mathbb{N}_0^{n-p+1}$ if and only if
\begin{align*}
	\sum_{i=1}^{n-p+1} \alpha_i = p \qquad \text{and} \qquad \sum_{i=1}^{n-p+1} i\alpha_i = n.
\end{align*}
If there is a $1\leq j \leq n-p +1$ such that $\alpha_j<p$, then
\begin{align*}
\binom{p}{\alpha_1,..., \alpha_{n-p+1}} = \frac p{\alpha_j} \binom{p-1}{\alpha_1, ..., \alpha_{j}-1, ... ,\alpha_{n-p+1}} \equiv 0 \mod p\mathcal{O}_p.
\end{align*}
If $\alpha_j=p$, then $\alpha_i=0$ for all $1\leq i \leq n-p+1$, $i\neq j$.
(Indeed, this follows from the condition $\displaystyle \sum_{i=1}^{n-p+1} \alpha_{i}=p$). 
Hence
\begin{align*}
	n = \sum_{i=1}^{n-p+1} i \alpha_i = j p.
\end{align*}
In particular, if $p\nmid n$, then $\displaystyle \frac{p!}{n!} B_{n,p}(!y)\equiv 0 \mod p \mathcal{O}_p$.
We obtain for $p\mid n$,
\begin{align*}
	\frac{p!}{n!} B_{n,p}(!y) \equiv y_{n/p}^p \mod p\mathcal{O}_p.
\end{align*}
Therefore,
\begin{align*}
	Y(z)^p&=
	\sum_{n=p}^\infty \frac{p!}{n!} B_{n,p}(!y) z^{n-p}
	\equiv \sum_{\substack{n=p \\ p\mid n}}^\infty y_{n/p}^p z^{n-p} \mod p\mathcal{O}_p\llbracket z\rrbracket\\
	&=\sum_{n=1}^\infty y_n^p z^{p(n-1)}
	\equiv \sum_{n=1}^\infty \frob_p(y_n) z^{p(n-1)} \mod p\mathcal{O}_p\llbracket z \rrbracket\\
	&=\frob_p\left[ \sum_{n=1}^\infty [Y(z)]_{n-1} z^{p(n-1)} \right]
	= \frob_p Y(z^p).
\end{align*}
Consequently, there is a $g(z)\in z\mathcal{O}_p\llbracket z \rrbracket$, such that
\begin{align*}
	\frob_pY(z^p)= Y(z)^p + p g(z).
\end{align*}
Hence,
\begin{align*}
	\frac{\frob_pY(z^p)}{Y(z)^p} = 1+ p \frac{g(z)}{Y(z)^p}.
\end{align*}
Since $Y\in 1 + z\mathcal{O}\left[D^{-1}\right]\llbracket z \rrbracket$, $Y$ is invertible in $\mathcal{O}_p\llbracket z\rrbracket$ and therefore
\begin{align*}
	\frac{g(z)}{Y(z)^p} \in z\mathcal{O}_p\llbracket z\rrbracket,
\end{align*}
from which \textit{(iii)} follows.

\item[\textit{(iii) $\Rightarrow$ (i)}]
Given \textit{(iii)}, we have an element $g(z)\in z \mathcal{O}_p \llbracket z  \rrbracket$, such that
\begin{align*}
\frac{\frob_p Y(z^p)}{Y(z)^p}=1 + p g(z).
\end{align*}
Taking the logarithm then gives
\begin{align*}
p\left[ \frac1p \frob_p V(z^p) - V(z) \right] = \log(1 + pg(z)) =\sum_{n=1}^\infty \frac{(-p g(z))^n}n
\in pz\mathcal{O}_p\llbracket z \rrbracket.
\end{align*}
In particular, $V$ is an $1$-function.
\end{enumerate}
This completes the proof.
\end{proof}

Dwork himself used his lemma (stated for $K=\mathbb{Q}$) as a key step to prove his theorem that for an affine hypersurface $H$ over a finite field $\mathbb{F}_q$ the zeta-function $Z(H;X)$ of $H$ in the variable $X$ is a rational function and its logarithmic derivative $\frac{Z'(H; X)}{Z(H;X)}$ is the generating function of the non-negative numbers $(N_n)_{n\in\mathbb{N}}$ of $\mathbb{F}_{q^n}$-points of $H$, i.e. $N_n = |H(\mathbb{F}_{q^n})|$.

\section{A Theorem by Minton}

The next \Cref{rational 1-functions} is a modified version of Theorem 7.1 in \cite{beu17}, which on the other hand is a re-proven statement from \cite{min14}.
It is the starting point for the proof of \Cref{main theorem}.
The crucial point is, that a rational 1-function only admits poles of order 1.
We give a proof for the sake of completeness.
In the course of this, we follow the ideas given in \cite{beu17}.

\begin{theorem}[compare with \cite{beu17}, \cite{min14}]\label{rational 1-functions}
Let $V\in \mathcal{S}_{\mathrm{rat}}^1(K|\mathbb{Q})$ representing the rational function $F(z) \in K(z)$ as its Maclaurin expansion.
Then there is a integer $r\in \mathbb{N}$, distinct algebraic numbers $\alpha_i \in \overline{\mathbb{Q}}^\times$, and $A_i\in \mathbb{Q}^\times$, for $i=1,...,r$, such that $F$ can be written as
\begin{align*}
	F(z)
	=\sum_{i=1}^r \frac{A_i\alpha_iz}{1-\alpha_iz}.
\end{align*}
\end{theorem}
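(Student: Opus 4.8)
The plan is to start from the defining property of a rational $1$-function and extract information about the pole structure of $F(z)$ by combining the Cartier-operator characterization from \Cref{s-sequence/function correspondence}\,(iii) with a pigeonhole argument on the orbit of $F$ under the operators $\mathscr{C}_p$, in the spirit of \cite{beu17}. Write $F(z) = \sum_n a_n z^n$ with $(a_n)$ a $1$-sequence, and perform a partial fraction decomposition of $F$ over $\overline{\mathbb{Q}}$: say $F(z) = P(z) + \sum_{i} \sum_{j=1}^{e_i} \frac{c_{i,j}}{(1-\alpha_i z)^{j}}$ for distinct $\alpha_i \in \overline{\mathbb{Q}}^\times$, a polynomial $P$, and $e_i \geq 1$. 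Since $V \in zK\llbracket z\rrbracket$ has no constant term and the $a_n$ grow at most polynomially times exponentially, one first argues $P$ is (up to the absorbed constant) zero, i.e. $\deg P \le 0$. The real content is to show every $e_i = 1$ and that $\alpha_i$ is a root of unity with $c_{i,1} \in \mathbb{Q}$.

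First I would set up the key stability observation. Applying $\mathscr{C}_p$ to a rational function keeps it rational: if $F$ has poles only at the $\alpha_i^{-1}$, then $\mathscr{C}_p F$ has poles among the $p$-th roots of the $\alpha_i^{-1}$, hence among algebraic numbers whose $p$-th powers lie in $\{\alpha_i\}$, and crucially the maximal pole order does not increase under $\mathscr{C}_p$ (this is a direct computation: $[\mathscr{C}_p F]_n = a_{pn}$, and extracting an arithmetic subprogression of a sequence of the form $\sum_i Q_i(n)\alpha_i^n$ with $\deg Q_i = e_i-1$ produces again such a sequence with the same degree bounds). Now iterate: the functions $F, \mathscr{C}_p F, \mathscr{C}_p^2 F, \dots$ all have bounded numerator degree and bounded denominator degree (the number of poles is controlled because poles of $\mathscr{C}_p^k F$ have $p^k$-th powers among a fixed finite set — here I would use that $\mathscr{C}_p$ composed with $\varepsilon_p$ and reality of coefficients forces the relevant $\alpha_i$ to be permuted, not proliferated; more carefully, work with a sufficiently large $p$ coprime to everything so that $z \mapsto z^p$ is injective on the pole set modulo the action). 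Hence the $\mathscr{C}_p$-orbit of $F$ lies in a finite-dimensional space, so by pigeonhole $\mathscr{C}_p^{a} F = \mathscr{C}_p^{b} F$ for some $a < b$, giving a functional equation $\mathscr{C}_p^{c} F = F$ with $c = b - a$.

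Next I would feed the congruence $\mathscr{C}_p^{r-1}(\mathrm{Frob}_p F - \mathscr{C}_p F) \equiv 0 \bmod p^{r}\mathcal{O}_p\llbracket z\rrbracket$ (the $s=1$ case of (iii), applied for suitable $r$) into this to control the arithmetic. The combination of the periodicity $\mathscr{C}_p^c F = F$ with the Frobenius congruence for all large $r$ forces the coefficients $a_n$ along the relevant subsequences to satisfy exact (not just approximate) relations $\mathrm{Frob}_p(a_{p^{r-1}n}) = a_{p^r n}$ eventually, which in turn pins down the $\alpha_i$ as roots of unity: a rational function fixed (up to the Frobenius twist) by $\mathscr{C}_p$ for one large prime, and with the growth constraint, can only have poles at roots of unity, and the pole order $e_i$ must be $1$ since a pole of order $\ge 2$ makes $|a_n|$ grow linearly, which is incompatible with the exact periodicity/Frobenius relation on coefficients. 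Finally, once $\alpha_i = \zeta^i$ are roots of unity, $\mathrm{Frob}_p$ permutes them, and matching residues under the Frobenius congruence across enough primes $p$ (using Chebotarev so that $\mathrm{Frob}_p$ runs over the Galois group) shows each residue $A_i = c_{i,1}$ is Galois-invariant, hence in $\mathbb{Q}$.

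The main obstacle I anticipate is the bookkeeping in the pigeonhole step: one must genuinely control the number of poles appearing in the whole $\mathscr{C}_p$-orbit, not just the pole order, and the naive bound lets $p$-th roots multiply the pole count by $p$ at each step. The fix is to choose $p$ carefully (coprime to $D$ and to the orders of all roots of unity that could appear, and large enough that the map $\alpha \mapsto \alpha^p$ is injective on the finite set generated by the $\alpha_i$ under taking $p$-power roots that stay algebraic of bounded height) so that $\mathscr{C}_p$ genuinely acts as a permutation on a fixed finite pole set rather than enlarging it; bounding the height of the $\alpha_i$ using the $p$-adic integrality of $(a_n)$ is the technical heart. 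Everything else is either the partial-fractions/growth estimate or a Chebotarev-plus-Galois-descent argument that is routine once the pole structure is known.
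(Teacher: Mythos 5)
Your central step does not work, and it aims at a conclusion that is both stronger than \Cref{rational 1-functions} and false at the level of $1$-functions. The pigeonhole argument is invalid: the $\mathscr{C}_p$-orbit of $F$ lying in a space of rational functions of bounded degree is a finite-\emph{dimensional} constraint, not a finiteness constraint, so you cannot conclude $\mathscr{C}_p^a F = \mathscr{C}_p^b F$. In fact the orbit is typically infinite: $V(z)=\frac{2z}{1-2z}$, i.e.\ $a_n=2^n$, lies in $\mathcal{S}^1_{\mathrm{rat}}(\mathbb{Q}|\mathbb{Q})$ by Euler's congruence, and $\mathscr{C}_p^k V$ has its unique pole at $2^{-p^k}$, never repeating (note also that $\mathscr{C}_p$ sends a pole $1/\alpha$ to $1/\alpha^{p}$, i.e.\ to $p$-th \emph{powers}, not $p$-th roots as you wrote). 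More importantly, the statement you are proving does not assert that the $\alpha_i$ are roots of unity, and they need not be: $a_n=\alpha^n+\beta^n$ for the Lucas numbers (traces of powers of an integer matrix) is a $1$-sequence whose poles are not roots of unity. Root-of-unity poles and periodicity are exactly what the $2$-function hypothesis buys in \Cref{main theorem}; any argument forcing them from the $1$-function property alone must be wrong. Your proposed "fix" (choose $p$ so that $\alpha\mapsto\alpha^p$ permutes the pole set) is circular, since such a permutation essentially presupposes the poles are roots of unity.

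The growth argument for excluding poles of order $\geq 2$ also fails: coefficients of $1$-functions may grow exponentially, so there is no archimedean constraint, and what must rule out coefficients of the shape $n\alpha^n$ is purely $p$-adic. The paper's proof does this directly: after partial fractions, it computes $\mathscr{C}_p\widetilde V$ for a single suitably chosen large prime $p$ (all $\alpha_i$, all $\alpha_i-\alpha_j$ $p$-adic units, $p$ larger than all pole orders), uses $\binom{pk+\nu}{\nu}\equiv 1 \bmod p$ for $\nu<p$ to see that $\mathscr{C}_p\widetilde V$ is congruent mod $p$ to a rational function with only simple poles, and then invokes the congruence $\mathscr{C}_p\widetilde V\equiv\widetilde V \bmod p\,\mathcal{O}_p\llbracket z\rrbracket$ to conclude every $m_i=1$. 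Your final step for $A_i\in\mathbb{Q}$ (Frobenius congruences over many primes plus Galois descent) is in the right spirit --- the paper gets $A_i^p\equiv A_i \bmod p$ by inverting a Vandermonde matrix mod $p$ and then applies Frobenius density --- but it only becomes available after the simple-pole reduction, which your argument does not establish.
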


\begin{proof}
Let $F$ be given by the fraction of $P,Q \in K[z]$, $Q\not\equiv 0$, i.e. $F=\frac PQ$.
We may assume that $Q(0)\neq 0$ and $P(0)=0$.
By \cite[Prop. 3.5]{beu17}, we have $\mathrm{deg} (P) \leq \mathrm{deg} (Q)$.
By adding a constant $C\in K$ to $F$ it does not affect the $1$-function condition but we may assume $\deg(P)<\deg(Q)$.
Then, by the \textit{Partial Fraction Decomposition} $\widetilde F=\frac PQ+C$ has the form
\begin{align*}
\widetilde F = \sum_{i=1} ^ r \sum_{j=1}^{m_i} \frac{A_{i , j}}{(1 - \alpha_i z )^j},
\end{align*}
where the $\alpha_i\in \overline{\mathbb{Q}}^\times$, $i\in \{1,...,r\}$ are distinct algebraic numbers, $m_i\in\mathbb{N}$  and $A_{i,j}\in\overline{\mathbb{Q}}$ for all $(i,j) \in \{1,...,r\} \times \{1,...,m_i\}$.
Now, let $p$ be a sufficiently large prime, unramified in $K|\mathbb{Q}$, such that the following conditions are simultaneously satisfied:
\begin{enumerate}[(i)]
\item $\alpha_i$ is a $p$-adic unit for all $i \in \{ 1, ... , r \}$,
\item $\alpha_i - \alpha_j$ is a $p$-adic unit for all $i,j \in \{1,...,r\}$, $i\neq j$, and
\item $p>m_i$ for all $i \in \{1,...,r\}$.
\end{enumerate}
What we need to show is $m_i=1$ for all $i \in \{1,...,r\}$.
We have
\begin{align*}
	\frac1{(1 - \alpha_iz)^j}=\sum_{k=0}^\infty \binom{k+j-1}{j-1} \alpha_i^k z^k.
\end{align*}
Therefore, if $\widetilde V(z) = V(z)+C$ is the Maclaurin series expansion of $\widetilde F$, we have
\begin{align*}
\mathscr{C}_p \widetilde V = \sum_{k=0}^\infty \left[ \sum_{i=1}^r \sum_{j=1}^{m_i} A_{i,j} \binom{p k + j - 1}{j - 1} \alpha_i^{pk} \right] z^k.
\end{align*}
Since $p>m_i$, we find $\binom{pk+\nu}{\nu}\equiv 1\mod p$ for all $0\leq \nu<m_i$ (in particular, $\nu<p$) by the following calculation
\begin{align*}
	\binom{pk+\nu}{\nu}=
		\prod_{\ell=1}^\nu\left(1+\frac{pk}\ell\right)
		\equiv 1\mod p.
\end{align*}
Consequently,
\begin{align*}
	\mathscr{C}_p \widetilde V &\equiv
	\sum_{k=0}^\infty \left[
	\sum_{i=1}^r\sum_{j=1}^{m_i}A_{i,j} \alpha^{pk}
	\right]z^k \mod p\\
	&=\sum_{i=1}^r
	\sum_{j=1}^{m_i}\frac{A_{i,j}}{1 - \alpha_i^p z}
	=\sum_{i=1}^r\frac{A_i}{1 - \alpha_i^p z},
\end{align*}
where $A_i=\sum_{j=1}^{m_i}A_{i,j}$.
Hence, $\mathscr{C}_p \widetilde V$ represents a rational function with exclusively simple poles modulo $p$. 
Thus, the $1$-function property $\mathscr{C}_p \widetilde V - \widetilde V \equiv 0 \mod p\mathcal{O}_p\llbracket z \rrbracket$ ensures that $\widetilde F$ has only simple poles as well.
Therefore, we write from now on
\begin{align*}
\widetilde F= \sum_{i=1}^r\frac{A_i}{1 - \alpha_i z},
\end{align*}
where $A_i , \alpha_i \in \overline{\mathbb{Q}}^\times$ and $\alpha_i \neq \alpha_j$ for $i\neq j$.
Evaluating $\tilde F$ at $z=0$ we conclude that $C=\sum_{i=1}^rA_i$.
Therefore,
\begin{align*}
	F=\widetilde F-C
	= \sum_{i=1}^r\frac{A_i}{1- \alpha_i z} - \sum_{i=1}^r A_i
	=\sum_{i=1}^r\frac{A_i \alpha_i z}{1-\alpha_iz}.
\end{align*}
In particular, we have
\begin{align*}
	a_n=\sum_{i=1}^r A_i \alpha_i^n \qquad\text{for all $n\in\mathbb{N}$}.
\end{align*}
The local $1$-function property for $p$ then gives
\begin{align*}
	0\equiv \mathrm{Frob}_p(a_m)-a_{mp}
	=\sum_{i=1}^r\left(
	\mathrm{Frob}_p(A_i)\mathrm{Frob}_p(\alpha_i^m)- A_i\alpha_i^{mp}
	\right)\mod p\mathcal{O}_p,
\end{align*}
for all $m\in\mathbb{N}$.
Since $\mathrm{Frob}_p$ is given by taking component-wise the $p$-th power modulo $\mathfrak{p}$ for all $\mathfrak{p} \mid (p)$, we conclude
\begin{align*}
	0\equiv \sum_{i=1}^r \left(A_i^p-A_i\right) \alpha_i^{mp} \mod p\mathcal{O}_p,
\end{align*}
for all $m\in\mathbb{N}$.
The Vandermonde type matrix $M$
\begin{align*}
	M=\begin{pmatrix}
	\alpha_1^p& \alpha_1^{2p}& \dots & \alpha_1^{rp}\\
	\alpha_2^p& \alpha_2^{2p}& \dots & \alpha_2^{rp}\\
	\vdots& \vdots & \ddots & \vdots\\
	\alpha_r^p& \alpha_r^{2p}& \dots & \alpha_r^{rp}
	\end{pmatrix}.
\end{align*}
is invertible modulo $p\mathcal{O}_p$. Indeed, its determinant is given by
\begin{align*}
	\det (M) &= \big(\prod_{i=1}^r \alpha_i^p\big)\times
	\prod_{1\leq i <j \leq r}\left(\alpha_j^p-\alpha_i^p\right)\\
	&\equiv \big(\prod_{i=1}^r \alpha_i^p\big)\times
	\prod_{1\leq i <j \leq r}\left(\alpha_j-\alpha_i\right)^p
	\mod p \mathcal{O}_p.
\end{align*}
By assumption (i) and (ii) above, we obtain $\det(M)\in \mathcal{O}_p^\times$.
Hence, $A_i^p\equiv A_i\mod p$ for all $i\in\{1,...,r\}$.
From \textit{Frobenius's Densitiy Theorem}, see for instance \cite{jan73}, it follows that $A_i\in\mathbb{Q}$ for all $i\in\mathbb{N}$.
\end{proof}

Originally (compare \cite{walcher16}), an $s$-function $\smallint\hspace{-0.25em}^s V \in z K\llbracket z \rrbracket$ was called \textit{algebraic} if $Y(z) := \exp(- \smallint V)$ is the Maclaurin series expansion of an algebraic function.
Consequently, a rational $s$-function is an $s$-function $\smallint\hspace{-0.25em}^sV(z)$ such that $Y$ is the Maclaurin expansion of a rational function.
%The author decided to use another terminology.
However, in the present work, the set $\mathcal{S}^s_{\mathrm{rat}} (K|\mathbb{Q})$ ($\overline{\mathcal{S}}^s_{\mathrm{rat}} (K|\mathbb{Q})$, resp.) denote the subset in $\mathcal{S}^s(K|\mathbb{Q})$ ($\overline{\mathcal{S}}^s(K|\mathbb{Q})$, resp.) of elements which represent rational functions.
This change in terminology is justified by the following statement.

\begin{proposition}\label{cor: algebraicity of Y assuming rationality of V}
Let $V \in \mathcal{S}^1(K|\mathbb{Q})$ and $Y= \exp(-\smallint V)$. Then $V$ is the series expansion of a rational function if $Y$ is the series expansion of a rational function.
Conversely, if $V$ represents a rational function, then there is an $M \in \mathbb{N}$ such that $Y^M$ is the series expansion of a rational function.
\end{proposition}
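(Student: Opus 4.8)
The plan is to pass everything through the identity tying $V$ to $Y$. Since $\delta$ and $\smallint$ are mutually inverse on $zK\llbracket z\rrbracket$, the defining relation $Y=\exp(-\smallint V)$ gives $\log Y=-\smallint V$, hence $\delta\log Y=zY'/Y=-V$, that is,
\begin{align*}
V=-z\,\frac{Y'}{Y}.
\end{align*}
The first implication is then immediate: if $Y$ is the Maclaurin expansion of a rational function, then (as $Y(0)=1\neq 0$) it is invertible in $K(z)$, so $Y'/Y$ is rational and the identity above exhibits $V$ as the Maclaurin expansion of the rational function $-zY'/Y$.

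For the converse I would invoke \Cref{rational 1-functions}. As $V\in\mathcal{S}^1(K|\mathbb{Q})$ represents a rational function it lies in $\mathcal{S}^1_{\mathrm{rat}}(K|\mathbb{Q})$, so that theorem produces distinct $\alpha_i\in\overline{\mathbb{Q}}^\times$ and $A_i\in\mathbb{Q}^\times$, $i=1,\dots,r$, with $V(z)=\sum_{i=1}^r\frac{A_i\alpha_iz}{1-\alpha_iz}$, whence its coefficients are $a_n=\sum_{i=1}^rA_i\alpha_i^n$. Integrating term by term,
\begin{align*}
\smallint V(z)=\sum_{n=1}^\infty\frac{a_n}{n}z^n=-\sum_{i=1}^rA_i\log(1-\alpha_iz),
\end{align*}
and exponentiating yields the factorisation
\begin{align*}
Y(z)=\prod_{i=1}^r(1-\alpha_iz)^{A_i}
\end{align*}
as formal power series, each factor being the binomial series $\sum_{k\geq 0}\binom{A_i}{k}(-\alpha_iz)^k$, which lies in $1+z\overline{\mathbb{Q}}\llbracket z\rrbracket$.

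It remains to clear denominators. Picking $M\in\mathbb{N}$ with $MA_i\in\mathbb{Z}$ for every $i$, one gets $Y(z)^M=\prod_{i=1}^r(1-\alpha_iz)^{MA_i}$, a quotient of polynomials and hence a rational function over $\overline{\mathbb{Q}}$; since $V\in zK\llbracket z\rrbracket$ forces $Y\in 1+zK\llbracket z\rrbracket$, and therefore $Y^M\in 1+zK\llbracket z\rrbracket$, and since a power series with coefficients in $K$ that agrees with a rational function over $\overline{\mathbb{Q}}$ is already rational over $K$ (its coefficient sequence satisfies a linear recurrence whose solution space is defined over $K$), we conclude $Y^M\in K(z)$. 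I do not expect a genuine obstacle here: all the arithmetic content is carried by \Cref{rational 1-functions}, and what is left is routine bookkeeping with formal $\exp$ and $\log$ together with this descent of the field of rationality from $\overline{\mathbb{Q}}$ to $K$. The proposition merely records that the two candidate definitions of a rational $1$-function, namely rationality of $V$ and rationality of $Y$, coincide up to passing to an $M$-th power, the ambiguity being exactly the common denominator of the exponents $A_i\in\mathbb{Q}$.
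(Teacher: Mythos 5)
Your proposal is correct and follows essentially the same route as the paper: the forward direction via $V=-\delta Y/Y$, and the converse via \Cref{rational 1-functions}, the factorisation $Y=\prod_{i=1}^r(1-\alpha_i z)^{A_i}$, and clearing the denominators of the rational exponents $A_i$ with a suitable $M$. The only addition is your explicit descent of rationality from $\overline{\mathbb{Q}}$ to $K$, a detail the paper leaves implicit.
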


\begin{proof}

Let $Y$ be the series expansion of a rational function, then so is $\delta Y$.
Hence, $\frac{\delta Y}Y$ is the series expansion of a rational function.
Consequently, $V=-\frac{\delta Y}Y$ represents a rational function.
Note that this holds even for arbitrary $V \in z K\llbracket z \rrbracket$.
Conversely, let $V$ represent a rational function at zero.
By \Cref{rational 1-functions} (here we use $V \in \mathcal{S}^1 (K|\mathbb{Q})$) there exists a natural number $r \in \mathbb{N}$, and distinct $\alpha_i\in \overline{\mathbb{Q}}^\times$, $A_i\in \mathbb{Q}^\times$, for $i = 1 , ... , r$, such that
\begin{align*}
	V = \sum_{i=1}^r\frac{A_i \alpha_i z}{1-\alpha_i z}
	= \sum_{i=1}^r A_i \sum_{n=1}^\infty \alpha_i^n z^n
	=\sum_{i=1}^r A_i \delta \sum_{n=1}^\infty \frac{\alpha_i^n}n z^n
	= - \sum_{i=1}^r A_i\delta \log(1-\alpha_iz).
\end{align*}
Therefore,
\begin{align*}
	Y = \exp(-\smallint V)
	=\exp\left(\sum_{i=1}^r A_i \log(1-\alpha_iz)\right)
	=\prod_{i=1}^r (1-\alpha_iz)^{A_i}.
\end{align*}
Taking $M \in \mathbb{N}$ to be the least common multiple of the denominators of $A_i$ we find that $Y^M$ is a rational function.
\end{proof}

Proving \Cref{cor: algebraicity of Y assuming rationality of V} does not go without mentioning the following more general and known statements.
In fact, we have some analogue statements of \Cref{cor: algebraicity of Y assuming rationality of V} for algebraic functions given in \Cref{prop: V algebraic if Y algebraic} and \Cref{thm: kassel-reutenauer}.
\Cref{prop: V algebraic if Y algebraic} is a direct consequence of the combined work of Stanley (see \cite{stan80}, 1980) and Harris and Sibuya (see \cite{har85}, 1985) as we will demonstrate immediately.
A (formal) power series $V(z) \in K\llbracket z \rrbracket$ D-finite, if all (formal) derivatives of $V$, $\frac{\mathrm{d}^n}{\mathrm{d}^nz} V(z)$, for all $n\in \mathbb{N}$, span a finite dimensional vector space over $K(z)$.

\begin{proposition}\label{prop: V algebraic if Y algebraic}
Let $V \in K\llbracket z \rrbracket$ and $Y= \exp(-\smallint V) \in 1+ z K \llbracket z \rrbracket$. If $Y$ is the series expansion of an algebraic function, then $V$ is the series expansion of an algebraic function.
\end{proposition}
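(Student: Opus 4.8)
The plan is to deduce this from two classical facts: first, that $Y$ algebraic implies $Y$ is D-finite (this is Stanley's theorem, \cite{stan80}), and second, that the class of D-finite power series is closed under the operation $Y \mapsto \delta Y / Y$ when $Y(0) \neq 0$ — more precisely, that the logarithmic derivative of an algebraic function is algebraic. Since $V = -\smallint V$ differentiated by $\delta$ recovers... let me restate: we have $Y = \exp(-\smallint V)$, so $\delta \log Y = -\delta \smallint V = -V$ (using that $\delta$ and $\smallint$ are mutually inverse on $zK\llbracket z\rrbracket$, hence $\delta \smallint V = V$). Thus $V = -\dfrac{\delta Y}{Y} = -z\dfrac{Y'}{Y}$. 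So the entire content is: the logarithmic derivative of an algebraic power series with $Y(0) \neq 0$ is again algebraic.

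First I would record the identity $V = -z Y'/Y$ explicitly, noting $Y \in 1 + zK\llbracket z\rrbracket$ is a unit in $K\llbracket z\rrbracket$ so the quotient makes sense as a formal power series. Then I would invoke the result of Harris and Sibuya \cite{har85}: if $Y$ is algebraic over $K(z)$ and $Y \neq 0$, then $Y'/Y$ is algebraic over $K(z)$ if and only if... actually their theorem states precisely that for $Y$ algebraic, $Y'/Y$ is algebraic (this is the ``easy'' direction of their characterization — the genuinely surprising half is that $Y'$ and $Y$ both algebraic with $Y'/Y$ algebraic forces $Y = c\prod(z-a_i)^{r_i}$, but I only need the trivial direction here). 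Concretely: if $P(z, Y) = 0$ with $P \in K[z][T]$ irreducible, differentiate to get $P_z(z,Y) + P_T(z,Y)\, Y' = 0$, so $Y' = -P_z(z,Y)/P_T(z,Y)$ lies in the field $K(z)(Y)$, which is a finite algebraic extension of $K(z)$; hence $Y'/Y \in K(z)(Y)$ is algebraic over $K(z)$. Therefore $V = -z Y'/Y \in K(z)(Y)$ is algebraic over $K(z)$, and being a power series it is the Maclaurin expansion of an algebraic function. This is essentially a one-paragraph argument once the identity $V = -zY'/Y$ is on the table.

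I would then remark that Stanley's contribution, while cited in the surrounding text, is not strictly needed for \emph{this} proposition — it is the statement that algebraic implies D-finite, which is relevant for the broader discussion and for \Cref{thm: kassel-reutenauer}, but the bare algebraicity claim of \Cref{prop: V algebraic if Y algebraic} follows from the elementary field-theoretic computation above. If one wants to phrase the proof uniformly through D-finiteness, one can instead argue: $Y$ algebraic $\Rightarrow$ $Y$ D-finite (Stanley) $\Rightarrow$ $\log Y$ D-finite (closure of D-finite series under $\int$ of D-finite, applied to $Y'/Y$ which is D-finite as a ratio... — but ratios of D-finite series need not be D-finite in general, so this route is actually more delicate and I would avoid it). The cleanest path is the direct one via $V = -zY'/Y \in K(z)(Y)$.

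The main obstacle, such as it is, is purely expository: making sure the formal manipulation $\delta \smallint V = V$ and the passage from ``$V$ algebraic over $K(z)$ as a formal Laurent/power series'' to ``$V$ is the Maclaurin expansion of an algebraic function'' are stated with the right hypotheses — in particular that $Y$ being a unit in $K\llbracket z\rrbracket$ (guaranteed by $Y \in 1 + zK\llbracket z\rrbracket$) is what lets us form $Y'/Y$ and guarantees the power series $V$ we obtain genuinely has this closed algebraic form rather than merely a formal one. Everything else is a short citation to \cite{har85} (and, if desired, \cite{stan80} for context). I do not anticipate any real difficulty; the proposition is a soft consequence of the cited literature.
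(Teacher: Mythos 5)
Your proof is correct, but it takes a genuinely different route from the paper. The paper argues through D-finiteness: since $Y$ is algebraic, so is $\frac1Y$; by Stanley's theorem (\cite{stan80}) both $Y$ and $\frac1Y$ are D-finite; and then the Harris--Sibuya result (Cor.~1 in \cite{har85}) -- which states that if $Y$ and $\frac1Y$ are both D-finite then $\frac{\delta Y}{Y}$ is algebraic -- gives the conclusion via $V=-\frac{\delta Y}{Y}$. You instead bypass both citations with the elementary field-theoretic observation that differentiating an irreducible relation $P(z,Y)=0$ yields $Y'=-P_z(z,Y)/P_T(z,Y)\in K(z)(Y)$ (legitimate in characteristic zero, since $P_T(z,Y)\neq 0$), so that $V=-zY'/Y$ lies in the finite extension $K(z)(Y)$ and is therefore algebraic; the unit condition $Y\in 1+zK\llbracket z\rrbracket$ makes the quotient a genuine power series. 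This is a perfectly valid and more self-contained proof of the proposition as stated; what the paper's route buys is the stronger input hypothesis (Harris--Sibuya needs only D-finiteness of $Y$ and $\frac1Y$, not algebraicity) and continuity with the surrounding discussion of D-finite series and \Cref{thm: kassel-reutenauer}. One caution on attribution: you describe the Harris--Sibuya theorem as saying ``$Y$ algebraic $\Rightarrow Y'/Y$ algebraic'' and call it the easy direction of their characterization; that is not what Cor.~1 in \cite{har85} asserts (its hypothesis is D-finiteness of $Y$ and $\frac1Y$), so if you keep your elementary argument you should simply not cite \cite{har85} for it, rather than cite it inaccurately.
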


\begin{proof}
Assume that $Y$ represents an algebraic function, i.e. an algebraic element over the field of rational functions $K(z)$.
Then $\frac1Y$ is also an algebraic function.
By the following theorem due to Stanley, $Y$ and $\frac1Y$ are D-finite.
\begin{theorem}[Thm. 2.1 in \cite{stan80}]\label{thm: algebraic --> d-finite}
If $Y \in K\llbracket z \rrbracket$ is algebraic, then $Y$ is D-finite.
\end{theorem}
Additionally, In \cite{har85}, Harris and Sibuya established the following theorem:
\begin{theorem}[Cor. 1 in \cite{har85}]\label{thm: Y d-finite --> V algebraic}
Let $Y\in K\llbracket z \rrbracket$, $Y\neq 0$, be a power series such that $Y$ and $\frac1Y$ are D-finite. Then the logarithmic derivative $\frac{\delta Y}{Y}$ of $Y$ is algebraic over $K(z)$.
\end{theorem}
The statement then follows by recognizing that $V= - \frac{\delta Y}Y$.
\end{proof}

As pointed out in \cite{gar15} (and also in \cite{kas14}), the converse of \Cref{prop: V algebraic if Y algebraic} is not true:
Take for $Y$ the exponential function $\exp(z)$, which is a transcendental formal power series, then $\frac{\delta Y}Y= z$, which is even a rational function.
However, under an additional assumption on the integrality of the coefficients of $Y$, Kassel and Reuntenauer wrote down a proof of the following theorem in \cite{kas14}, 2014, by using the solution to the \textit{Grothendieck-Katz conjecture}.

\begin{theorem}[Thm. 4.4 in \cite{kas14}]\label{thm: kassel-reutenauer}
If $Y\in \mathbb{Z} \llbracket z \rrbracket$ is a formal power series with integral coefficients such that $\frac{\delta Y}Y$ is algebraic, then $Y$ is algebraic.
\end{theorem}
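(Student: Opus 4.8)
The statement is Theorem~4.4 of \cite{kas14}; my plan is to reproduce that argument, whose engine is the rank-one case of the Grothendieck--Katz $p$-curvature conjecture. Only the implication ``$\delta Y/Y$ algebraic $\Rightarrow$ $Y$ algebraic'' requires work, so assume $Y\neq 0$ with $\delta Y/Y$ algebraic over $\mathbb{Q}(z)$. First I would make harmless normalizations: writing $Y=z^mu$ with $u(0)\neq 0$ one has $\delta Y/Y=m+\delta u/u$, and since $z^m$ is algebraic it suffices to treat $u$; after inverting the finitely many primes dividing $u(0)$ and the finitely many initial denominators, I may assume that for every prime $p$ outside a fixed finite set $\Sigma$, $Y$ lies in $1+z\mathbb{Z}_{(p)}\llbracket z\rrbracket$. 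Put $f:=\delta Y/Y\in\mathbb{Q}\llbracket z\rrbracket$; note $f(0)=0$.

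Next I would package the problem as a connection. Since $f$ is algebraic over $\mathbb{Q}(z)$ there are: a smooth projective geometrically connected curve $C$ over a number field $L$, a finite morphism $C\to\mathbb{P}^1_z$, a point $x_0\in C$ above $z=0$ at which $z$ is a uniformizer, and a rational function on $C$ agreeing with $f$ inside $\widehat{\mathcal{O}}_{C,x_0}=L\llbracket z\rrbracket$. Let $U\subseteq C$ be the complement of $0$, $\infty$ and the poles of $f$, and on the trivial line bundle over $U$ form the rank-one connection $\nabla=\delta-f$, i.e. $\nabla_\delta(g)=\delta g-fg$. By construction $Y$ is a horizontal section of $\nabla$ on the formal disk at $x_0$, and the problem becomes: show that $\nabla$ has algebraic (equivalently, finite-monodromy) solutions.

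The crux is to verify that $\nabla$ has vanishing $p$-curvature for almost all primes. I would choose a model of $(C,U,\nabla)$ over $\mathcal{O}_L[1/M]$ and, for a prime $\mathfrak p$ of $\mathcal{O}_L$ with $\mathfrak p\nmid M$ lying over a rational prime $p\notin\Sigma$, reduce modulo $\mathfrak p$. Then the reduction $\bar Y$ lies in $\mathbb{F}_{\mathfrak p}\llbracket z\rrbracket^{\times}$ and satisfies $\delta\bar Y=\bar f\,\bar Y$. On $\mathbb{F}_{\mathfrak p}\llbracket z\rrbracket$ the $p$-fold iterate of $\delta$ satisfies $\delta^{[p]}=\delta$ (Fermat, since $\delta(z^n)=nz^n$ forces $\delta^{[p]}(z^n)=n^pz^n=nz^n$), so the $p$-curvature $\psi_p=\nabla_\delta^{\,p}-\nabla_{\delta^{[p]}}=\nabla_\delta^{\,p}-\nabla_\delta$ of $\nabla_{\mathfrak p}$ on the formal disk kills the unit $\bar Y$ and hence vanishes there. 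Because $\psi_p$ is $\mathcal{O}$-linear and horizontal (Katz), its vanishing on the formal disk at $x_0$ propagates to all of $U_{\mathfrak p}$. This is precisely the step that uses integrality of $Y$: it excludes $e^z$-type behaviour, whose $p$-curvature is nonzero.

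Finally I would invoke the rank-one instance of the Grothendieck--Katz conjecture (a theorem): a rank-one connection on $U/L$ with vanishing $p$-curvature for almost all $\mathfrak p$ has finite monodromy, i.e. becomes trivial on some finite étale cover $U'\to U$. A trivialization is multiplication by a unit $u\in\overline{\mathbb{Q}}(U')^{\times}$ with $\delta u=fu$, which, being a rational function on $U'$, is algebraic over $\overline{\mathbb{Q}}(z)$; since the ODE has rank one, the formal solution $Y$ is a constant multiple of $u$ near $x_0$, so $Y$ is algebraic over $\overline{\mathbb{Q}}(z)$. A routine Galois descent --- the minimal polynomial of $Y$ over $\overline{\mathbb{Q}}(z)$ is fixed by $\mathrm{Gal}(\overline{\mathbb{Q}}/\mathbb{Q})$ because $Y\in\mathbb{Q}\llbracket z\rrbracket$ is --- upgrades this to algebraicity over $\mathbb{Q}(z)$. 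The main obstacles I anticipate are the two structural inputs: the horizontality of the $p$-curvature (needed to pass from the formal disk at $x_0$ to all of $U_{\mathfrak p}$) and the appeal to Grothendieck--Katz (even in rank one it is the conceptual heart), together with the spreading-out bookkeeping.
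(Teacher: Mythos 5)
Your proposal is essentially the argument of Kassel--Reutenauer themselves: the paper does not prove this statement but imports it verbatim from \cite{kas14}, explicitly noting that its proof rests on the solved (rank-one) case of the Grothendieck--Katz $p$-curvature conjecture, which is exactly the engine of your sketch. Your reconstruction --- using integrality of $Y$ to produce a unit formal solution mod $p$, hence vanishing $p$-curvature on the formal disk and by horizontality on the whole curve, then invoking the rank-one theorem and descending --- is a correct account of that route, so there is nothing to compare beyond noting the agreement.
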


Note from Dwork's Integrality Lemma (cf. \Cref{characterization of 1-functions}) that the integrality condition on the coefficients of $Y$ in the case where $Y\in 1+ z \mathbb{Z}\llbracket z \rrbracket$ is equivalent to saying that $V = (\pm)\frac{\delta Y}Y \in \mathcal{S}^1(\mathbb{Q})$.
This observation coincides with the proof of \Cref{cor: algebraicity of Y assuming rationality of V}.

\section{Algebraic structures}

Recall the operators $\varepsilon_k$ and $\mathscr{C}_\ell$ from above.
Obviously, $\varepsilon_k$ preserves the integrality property \cref{eq: s-function} of an $s$-function $\smallint\hspace{-0.25em}\,^s V(z)$, that is, $\varepsilon_k \smallint\hspace{-0.25em}\,^sV(z)$ remains an $s$-function.
Therefore, we may define $\varepsilon_k^{(s)}\colon \mathcal{S}^s(K|\mathbb{Q}) \rightarrow \mathcal{S}^s(K|\mathbb{Q})$ by the composition
\begin{align}\label{eq: shift}
	\varepsilon_k^{(s)} \colon \mathcal{S}^s(K|\mathbb{Q})
	\xrightarrow{\smallint\hspace{-0.25em}\,^s} zK\llbracket z \rrbracket
	\xrightarrow{\varepsilon_k} zK \llbracket z\rrbracket
	\xrightarrow{\delta^s} \mathcal{S}^s(K|\mathbb{Q}).
\end{align}
Equivalently, $\varepsilon_k^{(s)}$ is given by $z \mapsto k^s z^k$, i.e. $\varepsilon _k ^{(s)} = k^s \varepsilon _k$.
In particular,
\begin{align*}
\varepsilon_k \colon \overline{\mathcal{S}}^s(K|\mathbb{Q}) \rightarrow \overline{\mathcal{S}}^s(K|\mathbb{Q}),
\end{align*}
i.e. the multiplication by $k^s$ can be omitted.
It is also obvious, that the Cartier operator $\mathscr{C}_\ell$ (for $\ell \in \mathbb{N}$) gives a map $\mathscr{C}_\ell \colon \mathcal{S}^s(K|\mathbb{Q}) \rightarrow \mathcal{S}^s(K|\mathbb{Q})$, compare with \Cref{s-sequence/function correspondence} \textit{(i)} $\Leftrightarrow$ \textit{(iii)}.
Note that that $\varepsilon_k^{(s)}$ and $\mathscr{C}_\ell$ preserve rationality, i.e.
\begin{align}\label{eq: preservation of s-functions}
\varepsilon_k, \mathscr{C}_\ell \colon \overline{\mathcal{S}}_{\mathrm{rat}}^s ( K | \mathbb{Q} ) \rightarrow \overline{\mathcal{S}}_{\mathrm{rat}}^s (K|\mathbb{Q}).
\end{align}
This is obvious for $\varepsilon_k^{(s)}$.
To see that $\mathscr{C}_\ell$ preserves rationality, let $\zeta_{\ell}$ denote a primitive $\ell$-th root of unity and note that the Cartan Operator $\mathscr{C}_\ell \colon K \llbracket z \rrbracket \rightarrow K \llbracket z \rrbracket$  can be represented as
\begin{align}\label{eq: cartan operator}
\mathscr{C}_\ell W(z) = \frac1\ell \sum_{r=1}^\ell W \left( \zeta_\ell^r z^{\nicefrac1\ell} \right),
\end{align}
where $W\in K \llbracket z \rrbracket$ is a formal power series.
We obtain

\begin{proposition}
Let $R$ be the ring given by $R= \mathbb{Q} \left[ \varepsilon_k, \mathscr{C}_\ell\,|\, k,\ell \in \mathbb{N} \right]$.
Then $\overline{\mathcal{S}}^s(K|\mathbb{Q})$ is an $R$-module.
Also, $\overline{\mathcal{S}}^2_{\mathrm{rat}} (K | \mathbb{Q} )$ is a $R$-submodule of $\overline{\mathcal{S}}^s(K|\mathbb{Q})$ for all $s\geq 2$.
\end{proposition}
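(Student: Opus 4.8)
The plan is to reduce both module statements to closure properties. Throughout I regard $R$ as the subring of $\mathrm{End}_{\mathbb{Q}}(zK\llbracket z\rrbracket)$ generated over $\mathbb{Q}$ by all the operators $\varepsilon_k$ and $\mathscr{C}_\ell$; then any $\mathbb{Q}$-linear subspace $X\subseteq zK\llbracket z\rrbracket$ stable under every $\varepsilon_k$ and every $\mathscr{C}_\ell$ is automatically an $R$-submodule of $zK\llbracket z\rrbracket$, so the assertions ``$\overline{\mathcal{S}}^s(K|\mathbb{Q})$ is an $R$-module'' and ``$\overline{\mathcal{S}}^2_{\mathrm{rat}}(K|\mathbb{Q})$ is an $R$-submodule of $\overline{\mathcal{S}}^s(K|\mathbb{Q})$'' become such closure statements.

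First I would check that $\overline{\mathcal{S}}^s(K|\mathbb{Q})=\bigcup_{C\in\mathbb{N}}\frac1C\mathcal{S}^s(K|\mathbb{Q})$ is a $\mathbb{Q}$-subspace of $zK\llbracket z\rrbracket$. This is immediate from the definition: since $\mathrm{Frob}_p$ is a ring homomorphism, both the congruence $\mathrm{Frob}_p(a_{mp^{r-1}})\equiv a_{mp^r}\bmod p^{sr}\mathcal{O}_p$ and the integrality $a_n\in\mathcal{O}[D^{-1}]$ survive termwise addition of sequences and multiplication of a sequence by a fixed integer, so $\mathcal{S}^s(K|\mathbb{Q})$ is an additive subgroup closed under $\mathbb{Z}$-scaling and $\overline{\mathcal{S}}^s(K|\mathbb{Q})$ is its $\mathbb{Q}$-span. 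Stability under the generators of $R$ is essentially recorded just above the proposition: $\varepsilon_k$ maps $\overline{\mathcal{S}}^s(K|\mathbb{Q})$ into itself (the factor $k^s$ in $\varepsilon_k^{(s)}$ being absorbed into a rational constant), while $\mathscr{C}_\ell$ maps $\mathcal{S}^s(K|\mathbb{Q})$, hence $\overline{\mathcal{S}}^s(K|\mathbb{Q})$, into itself by \Cref{s-sequence/function correspondence} \textit{(i)} $\Leftrightarrow$ \textit{(iii)}. A $\mathbb{Q}$-subspace stable under the generators is stable under $R$, so $\overline{\mathcal{S}}^s(K|\mathbb{Q})$ is an $R$-module; this half is routine.

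For the second assertion, stability of $\overline{\mathcal{S}}^2_{\mathrm{rat}}(K|\mathbb{Q})$ under $R$ is \cref{eq: preservation of s-functions} together with the fact that $\mathbb{Q}$-linear combinations of rational functions are rational, so everything hinges on the inclusion $\overline{\mathcal{S}}^2_{\mathrm{rat}}(K|\mathbb{Q})\subseteq\overline{\mathcal{S}}^s(K|\mathbb{Q})$ for $s\ge2$. Since this runs opposite to the descending chain \cref{eq: descending chain s-functions} it is genuine content, and here I would invoke \Cref{main theorem}. Given $V\in\mathcal{S}^2_{\mathrm{rat}}(K|\mathbb{Q})$ with Maclaurin rational function $F$, \Cref{main theorem} produces $A_i\in\mathbb{Q}$ and a primitive $N$-th root of unity $\zeta$ with $F(z)=\sum_{i=1}^{N}A_i\zeta^i z/(1-\zeta^i z)$, hence $a_n=\sum_{i=1}^{N}A_i\zeta^{in}$. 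I would set $C:=N^{s-2}\in\mathbb{N}$ and prove $C\,V\in\mathcal{S}^s(K|\mathbb{Q})$: integrality is free, as $a_n\in\mathcal{O}[D^{-1}]\subseteq\mathcal{O}_p$ for every unramified $p$, and for the $s$-congruence at an unramified $p$ one argues by cases. If $p\nmid N$, then $p$ is unramified in $\mathbb{Q}(\zeta)$; passing to $K(\zeta)$ and lifting Frobenius at a prime above $p$ gives $\mathrm{Frob}_p(\zeta)=\zeta^p$ exactly (reduction being injective on $N$-th roots of unity), so $\mathrm{Frob}_p(a_n)=a_{pn}$ exactly, $\mathrm{Frob}_p(a_{mp^{r-1}})-a_{mp^r}=0$, and scaling by the $p$-unit $C$ changes nothing. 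If $p\mid N$, put $e:=\ord_p(N)\ge1$; for $r\ge e+1$ both $mp^{r-1}$ and $mp^r$ are divisible by $p^e$ and the same computation applied to $\omega:=\zeta^{p^e}$ (order $N/p^e$, prime to $p$) again yields $\mathrm{Frob}_p(a_{mp^{r-1}})-a_{mp^r}=0$, whereas for $1\le r\le e$ the hypothesis $V\in\mathcal{S}^2$ gives $v_{\mathfrak{p}}(\mathrm{Frob}_{\mathfrak{p}}(a_{mp^{r-1}})-a_{mp^r})\ge2r$, and with $v_{\mathfrak{p}}(C)=(s-2)e$ and $e\ge r$ one gets $v_{\mathfrak{p}}(\mathrm{Frob}_{\mathfrak{p}}(Ca_{mp^{r-1}})-Ca_{mp^r})\ge(s-2)e+2r\ge sr$, i.e.\ the congruence modulo $p^{sr}\mathcal{O}_p$. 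Thus $C\,V\in\mathcal{S}^s(K|\mathbb{Q})$, so $V\in\overline{\mathcal{S}}^s(K|\mathbb{Q})$; since $\overline{\mathcal{S}}^2_{\mathrm{rat}}(K|\mathbb{Q})$ consists of rational multiples of such $V$, the inclusion follows, and with the $R$-stability this exhibits $\overline{\mathcal{S}}^2_{\mathrm{rat}}(K|\mathbb{Q})$ as an $R$-submodule of $\overline{\mathcal{S}}^s(K|\mathbb{Q})$.

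The main obstacle is the analysis at the primes $p\mid N$: there the $s$-congruence is strictly stronger than the $2$-congruence, so one must produce a \emph{single} constant $C$ correcting the defect uniformly in $m$ and $r$. What makes one $C$ suffice is that $\mathrm{Frob}_p(a_{mp^{r-1}})-a_{mp^r}$ vanishes identically once $r>\ord_p(N)$, so only the bounded range $1\le r\le\ord_p(N)$ contributes and $C=N^{s-2}$ absorbs it. The other point requiring care is justifying $\mathrm{Frob}_p(a_n)=a_{pn}$ as an exact equality — not merely modulo $\mathfrak{p}$ — when $p\nmid N$, which rests on injectivity of reduction modulo $\mathfrak{P}$ on the group of $N$-th roots of unity inside $K(\zeta)$.
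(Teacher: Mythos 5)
Your proposal is correct, and it actually supplies more than the paper does: the paper states this proposition with no proof at all, the preceding discussion only recording that $\varepsilon_k^{(s)}=k^s\varepsilon_k$ and $\mathscr{C}_\ell$ preserve $\mathcal{S}^s(K|\mathbb{Q})$ and that both preserve rationality, cf. \cref{eq: preservation of s-functions}. Your first half (closure of the $\mathbb{Q}$-subspace $\overline{\mathcal{S}}^s(K|\mathbb{Q})$ under the generators of $R$) is exactly that implicit argument. The real content is the inclusion $\overline{\mathcal{S}}^2_{\mathrm{rat}}(K|\mathbb{Q})\subseteq\overline{\mathcal{S}}^s(K|\mathbb{Q})$ for $s\geq 2$, which runs against the chain \cref{eq: descending chain s-functions} and which the paper never addresses explicitly; it tacitly rests on \Cref{main theorem} (equivalently \Cref{finish the puzzle}), proved only in Section 6. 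Your argument for it is sound: from $a_n=\sum_i A_i\zeta^{in}$ with $A_i\in\mathbb{Q}$ you get exact vanishing of $\mathrm{Frob}_p(a_{mp^{r-1}})-a_{mp^r}$ for unramified $p\nmid N$ (and for $p\mid N$ once $r>\ord_p(N)$, via $\omega=\zeta^{p^{\ord_p(N)}}$), using that the relevant local extensions $K(\zeta)$ resp. $K(\omega)$ are unramified at $p$ so the lifted Frobenius restricts correctly and acts on prime-to-$p$ roots of unity by $\zeta\mapsto\zeta^p$ exactly; the uniform constant $C=N^{s-2}$ then upgrades the $2$-congruence to the $s$-congruence in the bounded range $1\leq r\leq\ord_p(N)$, since $(s-2)\ord_p(N)+2r\geq sr$. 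Two remarks: your reduction correctly passes from $\overline{\mathcal{S}}^2_{\mathrm{rat}}$ to an honest element of $\mathcal{S}^2_{\mathrm{rat}}$ before applying \Cref{main theorem}, which matters because the $2$-congruence is only available after clearing the rational constant; and the forward reference to \Cref{main theorem} causes no circularity, as its proof in Section 6 uses \cref{eq: shift} but not this proposition. So your route is the natural completion of what the paper leaves implicit, with the explicit constant $N^{s-2}$ and the case analysis at $p\mid N$ being the added value.
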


It is clear, that the $s$-function property \cref{eq: s-function} is not respected by regular multiplication of power series.
However, we find that $\mathcal{S}^s(K|\mathbb{Q})$ is closed under the \textit{Hadamard product} of power series.
Let $V,W\in K\llbracket z \rrbracket$, $tV(z)= \sum_{n=0}^\infty a_n z^n$ and $ W(z)= \sum_{n=0}^\infty b_n z^n$.
Then the Hadamard product $V\odot W$ of $V$ and $W$ is given by the power series
\begin{align*}
	V\odot W(z) = \sum_{n=0}^\infty a_n b_n z^n.
\end{align*}

\begin{proposition}\label{prop: hadamard product and s-functions}
$(\mathcal{S}^s(K|\mathbb{Q}), +, \odot)$ is a $\mathbb{Z}\left[ D ^{-1} \right]$-algebra.
\end{proposition}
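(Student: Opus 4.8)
The plan is to verify the ring axioms directly on the level of coefficient sequences, using the characterization of $\mathcal{S}^s(K|\mathbb{Q})$ in terms of the local $s$-function property \eqref{eq: s-sequences}. First I would recall that by definition the coefficients of any element of $\mathcal{S}^s(K|\mathbb{Q})$ lie in $\mathcal{O}[D^{-1}]$, so the underlying coefficient ring is already a $\mathbb{Z}[D^{-1}]$-module; the constant series $z \mapsto \sum_{n\ge 1} z^n$ (equivalently the sequence $a_n = 1$) is easily checked to satisfy \eqref{eq: s-sequences} for every unramified $p$, since $\mathrm{Frob}_p(1) - 1 = 0$, and this will serve as the multiplicative identity for $\odot$. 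Associativity, commutativity and distributivity of $\odot$ over $+$ are inherited coefficient-wise from the corresponding identities in $\mathcal{O}[D^{-1}]$ and require no arithmetic input, so the only substantive point is \emph{closure}: if $V = \sum a_n z^n$ and $W = \sum b_n z^n$ both lie in $\mathcal{S}^s(K|\mathbb{Q})$, then $V \odot W = \sum a_n b_n z^n$ does too.

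The heart of the argument is therefore the following computation. Fix an unramified prime $p$, fix $m, r \in \mathbb{N}$, and write $c_n = a_n b_n$. We must show
\begin{align*}
\mathrm{Frob}_p\!\left(c_{mp^{r-1}}\right) - c_{mp^r} \equiv 0 \mod p^{sr}\mathcal{O}_p.
\end{align*}
Since $\mathrm{Frob}_p$ is a ring homomorphism on $\mathcal{O}_p$, the left-hand side equals $\mathrm{Frob}_p(a_{mp^{r-1}})\,\mathrm{Frob}_p(b_{mp^{r-1}}) - a_{mp^r} b_{mp^r}$. The standard telescoping trick applies: write this as
\begin{align*}
\mathrm{Frob}_p(a_{mp^{r-1}})\Big(\mathrm{Frob}_p(b_{mp^{r-1}}) - b_{mp^r}\Big) + b_{mp^r}\Big(\mathrm{Frob}_p(a_{mp^{r-1}}) - a_{mp^r}\Big).
\end{align*}
Both $\mathrm{Frob}_p(a_{mp^{r-1}})$ and $b_{mp^r}$ lie in $\mathcal{O}_p$ (here one uses that all coefficients are $p$-adic integers for unramified $p$, which is part of the $s$-sequence definition), and each of the two bracketed differences is divisible by $p^{sr}$ by the hypothesis that $V$ and $W$ are $s$-sequences. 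Hence the sum is divisible by $p^{sr}$, which is exactly what is required. The same bookkeeping shows $c_n \in \mathcal{O}[D^{-1}]$ and that $c_n$ is a $p$-adic integer for every unramified $p$, so $V \odot W \in \mathcal{S}^s(K|\mathbb{Q})$.

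I expect no genuine obstacle here; the only mild subtlety is ensuring that the $\mathbb{Z}[D^{-1}]$-algebra structure — as opposed to, say, a $\mathcal{O}[D^{-1}]$-algebra structure — is the natural one, i.e. that scalar multiplication by elements of $\mathbb{Z}[D^{-1}]$ preserves $\mathcal{S}^s(K|\mathbb{Q})$ and is compatible with $\odot$. This follows because multiplying a sequence by a fixed rational scalar $\lambda \in \mathbb{Z}[D^{-1}]$ (which is Frobenius-fixed and a $p$-adic unit for all but finitely many $p$, and always a $p$-adic integer for $p \nmid D$) preserves \eqref{eq: s-sequences}, as $\mathrm{Frob}_p(\lambda a_{mp^{r-1}}) - \lambda a_{mp^r} = \lambda(\mathrm{Frob}_p(a_{mp^{r-1}}) - a_{mp^r})$. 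With identity, closure, the ring axioms and the scalar action all in place, the proposition follows.
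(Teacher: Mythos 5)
Your proposal is correct and uses essentially the same argument as the paper: the only substantive point is closure under $\odot$, which the paper also establishes by the identical telescoping decomposition $\mathrm{Frob}_p(a_nb_n)-a_{pn}b_{pn}\equiv a_{pn}\bigl(\mathrm{Frob}_p(b_n)-b_{pn}\bigr)\equiv 0 \bmod p^{s(\ord_p(n)+1)}\mathcal{O}_p$, relying on the $p$-adic integrality of the coefficients. Your extra checks of the unit sequence $a_n=1$, the coefficient-wise ring axioms, and the $\mathbb{Z}\left[D^{-1}\right]$-scalar action are routine points the paper leaves implicit.
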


\begin{proof}
We only need to show that $V\odot W \in \mathcal{S}^s(K|\mathbb{Q})$, whenever $V,W \in \mathcal{S}^s(K|\mathbb{Q})$.
Let therefore $V(z) = \sum_{n=1}^\infty a_n z^n$ and $W(z) = \sum_{n=1}^\infty b_n z^n$, then
\begin{align*}
\mathrm{Frob}_p(a_n b_n) - a_{pn} b_{pn}\equiv
a_{pn}\left( \mathrm{Frob}_p (b_n)- b_{pn} \right)
\equiv 0 \mod p^{s(\ord_p(n)+1)} \mathcal{O}_p,
\end{align*}
as stated.
\end{proof}

Jungen proved in \cite{jun31} (see also \cite{sch61}), that the Hadamard product of a rational and a algebraic function is algebraic, while the Hadamard product of two rational functions remain rational.
Stanley proved in \cite{stan80} that the Hadamard product of two D-finite functions is D-finite.
As a conclusion to sum up, we have the following statement for $\mathcal{S}^s(K|\mathbb{Q})$.

\begin{proposition}
$(\mathcal{S}^s(K|\mathbb{Q}),+,\odot)$ is an $\mathcal{S}^s_{\mathrm{rat}}(K|\mathbb{Q})$-algebra.
Furthermore, $\mathcal{S}^s_{\mathrm{D\text{-}fin}}(K|\mathbb{Q})$ is an $\mathcal{S}^s_{\mathrm{rat}}(K|\mathbb{Q})$-subalgebra and $\mathcal{S}^s_{\mathrm{alg}}(K|\mathbb{Q})$ is an $\mathcal{S}^s_{\mathrm{rat}}(K|\mathbb{Q})$-submodule of $\mathcal{S}^s(K|\mathbb{Q})$.
\end{proposition}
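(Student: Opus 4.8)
The plan is to assemble \Cref{prop: hadamard product and s-functions} with the classical Hadamard-closure theorems of Jungen (\cite{jun31}) and Stanley (\cite{stan80}) recalled above. The guiding observation is that membership in $\mathcal{S}^s(K|\mathbb{Q})$ is a condition on the coefficient sequence alone, and that this condition is preserved under $\odot$ by \Cref{prop: hadamard product and s-functions} and trivially under $+$. Consequently, for each of the three asserted structures it is enough to verify that the relevant class of represented functions --- rational, D-finite, or algebraic --- is stable under the operations in play; the $s$-sequence part is then automatic.

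First I would treat $\mathcal{S}^s_{\mathrm{rat}}(K|\mathbb{Q})$. Since the Hadamard product of two rational functions is rational (Jungen), and rational power series are closed under addition and contain the Hadamard unit $\frac{z}{1-z} = \sum_{n\geq 1} z^n \in \mathcal{S}^s_{\mathrm{rat}}(K|\mathbb{Q})$, the set $\mathcal{S}^s_{\mathrm{rat}}(K|\mathbb{Q})$ is a subring of $(\mathcal{S}^s(K|\mathbb{Q}),+,\odot)$; and for $R\in\mathcal{S}^s_{\mathrm{rat}}(K|\mathbb{Q})$, $V\in\mathcal{S}^s(K|\mathbb{Q})$ we have $R\odot V\in\mathcal{S}^s(K|\mathbb{Q})$ again by \Cref{prop: hadamard product and s-functions}, so $\odot$ makes $\mathcal{S}^s(K|\mathbb{Q})$ an $\mathcal{S}^s_{\mathrm{rat}}(K|\mathbb{Q})$-algebra. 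For the D-finite case I would invoke Stanley's theorem that the Hadamard product of two D-finite series is D-finite, together with the elementary facts that D-finite series are closed under sums and that rational functions are D-finite; this gives that $\mathcal{S}^s_{\mathrm{D\text{-}fin}}(K|\mathbb{Q})$ is closed under $+$ and $\odot$ and contains $\mathcal{S}^s_{\mathrm{rat}}(K|\mathbb{Q})$, hence is an $\mathcal{S}^s_{\mathrm{rat}}(K|\mathbb{Q})$-subalgebra. Finally, for $\mathcal{S}^s_{\mathrm{alg}}(K|\mathbb{Q})$ I would use Jungen's other statement, that the Hadamard product of a rational function with an algebraic function is algebraic, together with the fact that the algebraic power series form a ring (in particular are closed under addition): this yields closure under $+$ and under $\odot$-multiplication by $\mathcal{S}^s_{\mathrm{rat}}(K|\mathbb{Q})$, so $\mathcal{S}^s_{\mathrm{alg}}(K|\mathbb{Q})$ is an $\mathcal{S}^s_{\mathrm{rat}}(K|\mathbb{Q})$-submodule of $\mathcal{S}^s(K|\mathbb{Q})$.

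I do not expect a genuine obstacle here: the argument is essentially a bookkeeping synthesis of \Cref{prop: hadamard product and s-functions} with the three cited closure theorems. The only point requiring care is to match each function class to the correct algebraic structure --- rational and D-finite functions form algebras under $\odot$, whereas the algebraic functions form only a module, since the Hadamard product of two algebraic functions is in general merely D-finite --- and to observe that every series produced along the way remains in $\mathcal{S}^s(K|\mathbb{Q})$, which is exactly what \Cref{prop: hadamard product and s-functions} provides.
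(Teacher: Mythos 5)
Your proposal is correct and follows exactly the route the paper intends: the paper gives no written proof, presenting the proposition as the immediate ``sum up'' of \Cref{prop: hadamard product and s-functions} together with the cited closure theorems of Jungen (rational $\odot$ rational is rational, rational $\odot$ algebraic is algebraic) and Stanley (D-finite $\odot$ D-finite is D-finite), which is precisely the synthesis you carry out. Your version merely spells out the bookkeeping (closure under $+$, the Hadamard unit $\tfrac{z}{1-z}$, and the algebra-versus-module distinction for the algebraic class), which matches the intended argument.
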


\section{Proof of \Cref{main theorem}}\label{proof of main theorem}

In the present section we will give a proof of \Cref{main theorem}.
More precisely, we prove

\begin{theorem}\label{main theorem (2)}
Let $V \in \overline{\mathcal{S}}^2_{\mathrm{rat}}(K|\mathbb{Q})_{\mathrm{fin}}$, $V(z)\neq 0$, representing the rational function $F(z)\in K(z)$ as its Maclaurin expansion and write $a_n= [V(z)]_n$, for all $n\in \mathbb{N}$.
Then $V$ is \textit{periodic}, i.e. there is an $N \in \mathbb{N}$ such that
\begin{align*}
	N= \min\{ k \in \mathbb{N}\,|\, a_n= a_{n+ k} \text{ for all $n\in\mathbb{N}$}\}.
\end{align*}
Furthermore, there are rational coefficients $A_i \in \mathbb{Q}$ for $i=1,...,N$ and an appropriate primitive $N$-th root of unity $\zeta$, such that
\begin{align}\label{eq: general rational 2-function}
	F(z)=\sum_{i=1}^N\frac{A_i\zeta ^i z}{1- \zeta^i z}, \quad \text{and $A_1\neq 0$}.
\end{align}
In particular, the coefficients $a_n$ of $V(z)$ have the form
\begin{align}\label{explicit coefficients}
	a_n = \sum_{i=1}^N A_i \zeta^{in}.
\end{align}
Moreover, the map $\pi \colon \overline{\mathcal{S}}_\mathrm{rat}^2 (K|\mathbb{Q})_\mathrm{fin} \rightarrow \mathbb{N}_0$, taking $V \mapsto N$ and $0 \mapsto 0$, is surjective. 
\end{theorem}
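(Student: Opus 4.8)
The plan is to first reduce to Minton's theorem and then upgrade the $1$-function congruence to the full $2$-function congruence by testing it against a single, carefully chosen prime.

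\textbf{Reduction.} Since $\mathcal{S}^2\subseteq\mathcal{S}^1$ (divisibility by $p^{2r}$ implies divisibility by $p^{r}$), $V$ lies in $\overline{\mathcal{S}}^1_{\mathrm{rat}}(K|\mathbb{Q})_{\mathrm{fin}}$; the proof of \Cref{rational 1-functions} only invokes the $1$-function property at one sufficiently large prime, so it carries over to the $\mathrm{fin}$/bar setting, and after clearing the rational constant from $\overline{\mathcal{S}}$ it gives $F(z)=\sum_{i=1}^{R}\frac{A_i\alpha_i z}{1-\alpha_i z}$ with $R\geq 1$, $A_i\in\mathbb{Q}^{\times}$, and pairwise distinct $\alpha_i\in\overline{\mathbb{Q}}^{\times}$, so that $a_n=\sum_{i=1}^{R}A_i\alpha_i^{n}$. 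Everything else then reduces to showing that each $\alpha_i$ is a root of unity: with $N_i:=\ord(\alpha_i)$ and $N:=\mathrm{lcm}(N_1,\dots,N_R)$, linear independence of the sequences $(\alpha_i^{n})_n$ together with $A_i\neq 0$ makes $N$ exactly the minimal period of $(a_n)$; writing $\mu_N=\{\zeta^{i}:1\leq i\leq N\}$ for a primitive $N$-th root $\zeta$ (chosen, together with a substitution $\zeta\mapsto\zeta^{k}$, $\gcd(k,N)=1$, so that the index-$1$ term survives) and putting $A_i:=0$ for the absent exponents yields \eqref{eq: general rational 2-function} and \eqref{explicit coefficients}; and $\pi$ is surjective because $V=\frac{z^{N}}{1-z^{N}}$, whose $n$-th coefficient is $1$ if $N\mid n$ and $0$ otherwise, lies in $\overline{\mathcal{S}}^2_{\mathrm{rat}}(K|\mathbb{Q})_{\mathrm{fin}}$ (for $p\nmid N$ the congruence $\mathrm{Frob}_p(a_{mp^{r-1}})=a_{mp^{r-1}}=a_{mp^{r}}$ is trivial) and has minimal period $N$, while $0\mapsto 0$.

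\textbf{Choice of prime and the crux.} Let $L$ be the Galois closure of $K(\alpha_1,\dots,\alpha_R)$ over $\mathbb{Q}$. By the Chebotarev density theorem pick an odd prime $p$ that splits completely in $L$, lies outside the finite bad set $S$ of $V$, divides no numerator or denominator of any $A_i$, and satisfies $p>R$; fixing a prime $\mathfrak{P}\mid p$ of $L$ we obtain an embedding $L\hookrightarrow L_{\mathfrak{P}}=\mathbb{Q}_p$ under which (excluding finitely many further primes) all $\alpha_i$ and all $\alpha_i-\alpha_j$ with $i\neq j$ become $p$-adic units. Since $p$ splits completely, $\mathrm{Frob}_p=\mathrm{id}$ on $K_p$, so the $2$-function property reads $a_{mp^{r-1}}\equiv a_{mp^{r}}\pmod{p^{2r}}$ in $\mathbb{Z}_p$ for all $m,r\geq 1$. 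Suppose, for contradiction, that $J:=\{i:\alpha_i\text{ is not a root of unity}\}\neq\varnothing$. For $i\notin J$ the field $\mathbb{Q}(\alpha_i)\subseteq L$ is cyclotomic, $p$ splits completely in it, hence $\alpha_i^{p-1}=1$ and $i$ contributes nothing to $a_{mp^{r-1}}-a_{mp^{r}}=\sum_i A_i\alpha_i^{mp^{r-1}}\bigl(1-(\alpha_i^{p-1})^{mp^{r-1}}\bigr)$. For $i\in J$ write $\alpha_i^{p-1}=1+p^{\kappa_i}\tilde v_i$ with $\kappa_i\geq 1$ (Fermat) and $\tilde v_i\in\mathbb{Z}_p^{\times}$; set $\kappa:=\min_{i\in J}\kappa_i$ and let $I_1\subseteq J$ realize the minimum. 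A lifting‑the‑exponent computation, legitimate because $p$ is odd and large, gives for $p\nmid m$ the congruence $a_{mp^{r-1}}-a_{mp^{r}}\equiv -m\,p^{\kappa+r-1}\sum_{i\in I_1}A_i\tilde v_i\alpha_i^{m}\pmod{p^{\kappa+r}}$, where we also used $\alpha_i^{mp^{r-1}}\equiv\alpha_i^{m}\pmod p$. If the sum on the right were a $p$-adic unit, the left side would have $p$-adic valuation exactly $\kappa+r-1$, which is smaller than $2r$ as soon as $r\geq\kappa$ — contradicting the $2$-function congruence. Hence $\sum_{i\in I_1}A_i\tilde v_i\alpha_i^{m}\equiv 0\pmod p$ for every $m$ coprime to $p$; choosing $m=1,\dots,|I_1|<p$ and inverting the Vandermonde matrix $(\alpha_i^{m})_{m,i}$, whose determinant $\prod_i\alpha_i\cdot\prod_{i<j}(\alpha_j-\alpha_i)$ is a unit by the choice of $p$, forces $A_i\tilde v_i\equiv 0\pmod p$, hence $\tilde v_i\equiv 0\pmod p$, which is impossible. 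Therefore $J=\varnothing$ and every $\alpha_i$ is a root of unity.

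\textbf{Expected obstacle.} The delicate part is this last step. The real leverage is the \emph{uniformity in $r$}: the congruence has to hold modulo $p^{2r}$ for every $r$, while the ``defect'' $a_{mp^{r-1}}-a_{mp^{r}}$ only gains one power of $p$ per step unless its leading part cancels, so a prime splitting completely in $L$ pins the $\alpha_i$ to the $p$-adic unit circle; the Vandermonde input is exactly what rules out accidental cancellation among the terms of minimal valuation. A secondary technical point is to make the reduction $\mathrm{Frob}_p=\mathrm{id}$ and the embedding into $\mathbb{Q}_p$ precise when $K$ (or $L$) is not Galois over $\mathbb{Q}$, which is handled by working over a prime of the Galois closure and using functoriality of the Frobenius under restriction.
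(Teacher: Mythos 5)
Your crux argument is correct, and it reaches the paper's key intermediate result (\Cref{main theorem 2}: the exponents must be roots of unity) by a genuinely different computation. The paper first proves two preparatory estimates (\Cref{prop: sebulba1}, \Cref{prop: sebulba2}) about the error terms $\rho_{i,n}(m)=p^{-n}\bigl(\alpha_i^{mp^{n}}-\alpha_i^{mp^{n-1}}\bigr)$ — their valuation $\kappa_i$ is independent of $n$ and of $m$ coprime to $p$, and $\rho_{i,n}(m)\equiv m\,\alpha_i^{(m-1)p^{n-1}}\rho_{i,n}(1)$ to high order — then iterates the $2$-function congruence to get $\sum_i B_i\rho_{i,n}(m)\equiv 0 \bmod p^{n+2\kappa}$ and inverts a Vandermonde matrix to reach the contradiction $\kappa\geq 1+2\kappa$. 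You instead write $\alpha_i^{p-1}=1+p^{\kappa_i}\tilde v_i$, expand $(1+p^{\kappa_i}\tilde v_i)^{mp^{r-1}}$ directly (legitimate for odd $p$), compare the exact valuation $\kappa+r-1$ of the minimal-$\kappa$ block with the required $2r$ for $r\geq\kappa$, and rule out cancellation with the same Vandermonde device restricted to $I_1$, ending in $\tilde v_i\equiv 0\bmod p$. The surrounding skeleton is the same as the paper's (reduction via \Cref{rational 1-functions}, Chebotarev/\Cref{chebotarev} to pick an odd completely split prime making all $\alpha_i$, $A_i$, $\alpha_i-\alpha_j$ units, identification $K_\mathfrak{p}\cong\mathbb{Q}_p$), but your lifting-the-exponent computation replaces the paper's two propositions and is more economical; your surjectivity witness $z^{N}/(1-z^{N})$ (with $p\mid N$ thrown into $S$) also works, where the paper instead uses $\varepsilon_N^{(2)}\bigl(z/(1-z)\bigr)=N^2z^N/(1-z^N)$ to stay inside $\mathcal{S}^2_{\mathrm{rat}}(K|\mathbb{Q})$.

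The one step you do not actually justify is $A_1\neq 0$. Replacing $\zeta$ by $\zeta^{k}$ with $\gcd(k,N)=1$ only helps if some frequency of exact order $N$ occurs with nonzero coefficient, and that does not follow from $N$ being the least common multiple of the orders: for instance $a_n=(-1)^n+\omega^{n}+\omega^{2n}$ (with $\omega$ a primitive cube root of unity) has integer values, the differences $a_{mp^{r}}-a_{mp^{r-1}}$ vanish identically for all $p\geq 5$, so $V\in\overline{\mathcal{S}}^2_{\mathrm{rat}}(\mathbb{Q}|\mathbb{Q})_{\mathrm{fin}}$ with $S=\{2,3\}$, its minimal period is $6$, yet no primitive sixth root of unity appears, so $A_1=A_5=0$ for every admissible choice of $\zeta$. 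To be fair, the paper's own treatment of this point (\Cref{prop: periodicity}, where it is claimed that if all $A_i$ with $\gcd(i,\hat N)=1$ vanished one could shrink $\hat N$) makes essentially the same unjustified leap, so your proposal matches the paper here rather than falling short of it; but as written this clause needs either an extra hypothesis or a corrected formulation, while everything up to and including the rationality, periodicity and surjectivity claims is sound.
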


From \Cref{main theorem (2)}, \Cref{main theorem} follows easily.
By multiplication with a integral constant, we may assume $V\in \mathcal{S}_{\mathrm{rat}}^2 ( K | \mathbb{Q} )_{\mathrm{fin}}$.
Let $S$ be the finite set of those primes, which ramify in $K|\mathbb{Q}$ and at which $V$ does not satisfy the local $2$-function property.
By substituting $K$ by $K(\zeta_q \, |\, q\in S)$, we might also assume $V \in \mathcal{S}_\mathrm{rat}^2(K|\mathbb{Q})$.
Therefore, let $V \in \mathcal{S}_\mathrm{rat}^2 (K|\mathbb{Q})$.
In particular, $V \in \mathcal{S}^1 ( K | \mathbb{Q} )$ by \cref{eq: descending chain s-functions}, and by \Cref{rational 1-functions}, there is an $r \in \mathbb{N}$, $A_i \in \mathbb{Q}^\times$  and distinct $\alpha_i \in \overline{\mathbb{Q}}^\times$ for $i \in \{ 1,...,r \}$ such that
\begin{align*}
	a_n = \sum_{i=1}^r A_i \alpha_i^n \text{ for all $n \in \mathbb{N}$}.
\end{align*}
In the following, let us assume $\alpha_i \in K$, since we might otherwise substitute $K$ by a normal closure of $K(\alpha_1,...,\alpha_r)$.
As pointed out by Minton in \cite{min14} the \textit{Chebotarëv Density Theorem} implies

\begin{theorem}[Thm. 3.3. in \cite{min14}]\label{chebotarev}
	Let $K$ be a Galois number field. For any $\sigma \in 
	\operatorname{Gal}(K | \mathbb{Q})$, there exists infinitely many
	primes $\mathfrak{p}$ of $K$ such that $\mathrm{Fr}_\mathfrak{p}=\sigma$.
\end{theorem}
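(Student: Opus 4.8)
The plan is to deduce this directly from the Chebotarëv density theorem, which is the standard route and the one Minton has in mind in \cite{min14}. First I would recall the precise statement: for a Galois extension $K|\mathbb{Q}$ with group $G = \operatorname{Gal}(K|\mathbb{Q})$ and a conjugacy class $C \subset G$, the set of rational primes $p$, unramified in $K|\mathbb{Q}$, for which the Frobenius conjugacy class attached to the primes $\mathfrak{p} \mid p$ coincides with $C$, has natural density $|C|/|G|$. Since $|C|/|G| > 0$, such primes $p$ form in particular an infinite set, and discarding the finitely many ramified primes changes nothing.

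Next I would apply this with $C$ equal to the conjugacy class of the given $\sigma \in G$. This yields infinitely many unramified rational primes $p$ such that for every prime $\mathfrak{p}$ of $K$ with $\mathfrak{p} \mid p$ the Frobenius element $\mathrm{Fr}_\mathfrak{p}$ lies in the conjugacy class of $\sigma$. The remaining point is to upgrade ``lies in the conjugacy class of $\sigma$'' to ``equals $\sigma$'' for a suitable choice of $\mathfrak{p}$. This is where the only genuine content lies: $G$ acts transitively on the set of primes of $K$ above $p$, and for $g \in G$ one has $\mathrm{Fr}_{g\mathfrak{p}} = g\,\mathrm{Fr}_\mathfrak{p}\,g^{-1}$. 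Hence, starting from any $\mathfrak{p}_0 \mid p$ with $\mathrm{Fr}_{\mathfrak{p}_0} = \tau$ and choosing $g \in G$ with $g\tau g^{-1} = \sigma$ (possible because $\tau$ lies in $C$, the conjugacy class of $\sigma$), the prime $\mathfrak{p} := g\mathfrak{p}_0$ of $K$ satisfies $\mathrm{Fr}_\mathfrak{p} = \sigma$.

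Finally, since distinct rational primes $p$ lie under disjoint sets of primes of $K$, the primes $\mathfrak{p}$ produced in the previous paragraph for the infinitely many admissible $p$ are pairwise distinct; this gives infinitely many primes $\mathfrak{p}$ of $K$ with $\mathrm{Fr}_\mathfrak{p} = \sigma$, as claimed. I do not expect any real obstacle here: the argument is essentially a bookkeeping step reconciling the conjugacy-class formulation of Chebotarëv's theorem with the element-wise formulation stated here, and the only subtlety worth spelling out explicitly is the transitivity-plus-conjugation fact for decomposition groups used above.
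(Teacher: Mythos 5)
Your proposal is correct and is exactly the intended argument: the paper itself gives no proof, simply noting that the statement follows from the Chebotarëv Density Theorem and citing Minton. Your write-up supplies the standard bookkeeping (transitivity of the Galois action on the primes above $p$ together with $\mathrm{Fr}_{g\mathfrak{p}} = g\,\mathrm{Fr}_\mathfrak{p}\,g^{-1}$) needed to pass from the conjugacy-class form of Chebotarëv to the element-wise statement, and it is accurate.
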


Let $p\in \mathbb{Z}$ be an unramified prime in $K|\mathbb{Q}$, splitting 
completely in $K$, i.e. $\mathrm{Fr}_\mathfrak{p}=\mathrm{id}_K$ for all $\mathfrak{p}\mid (p)$.
By the density theorem of Chebotarëv there are infinitely many such primes $p$. 
Let $m,n \in \mathbb{N}$ then the local $2$-function property reads
\begin{align}\label{eq: starting point}
a_{p^n m}-\mathrm{Frob}_p(a_{p^{n-1}m}) &= a_{p^n m} - a_{p^{n-1} m} \nonumber\\
&= \sum_{i=1}^r A_i \left(\alpha_i^{p^n m}-\alpha_i^{p^{n-1}m}\right)\nonumber\\
&\equiv 0\mod p^{2n}\mathcal{O}_p.
\end{align}

Before we dive into the proof, we give an intuition  of why \Cref{main theorem} is correct.
Since the congruence given in \cref{eq: starting point} is valid for infinitely many primes and all $m,n\in \mathbb{N}$, it should be true that these congruences already hold for each summand individually.
In other words, we expect
\begin{align*}
\alpha_i ^{p^nm} - \alpha_i^{p^{n-1}m} \equiv 0 \mod p^{2n} \mathcal{O}_p,
\end{align*}
for all $i\in\{1,...,r\}$ and all $m,n\in \mathbb{N}$ and all primes $p$ that split completely in $K|\mathbb{Q}$.
Therefore, we should be able to reduce \cref{eq: starting point} to the case $r=1$.
The case $r=1$ is subject of \Cref{lem: a sufficient criterion for root of unity}
Indeed, the speed of convergence of \cref{eq: starting point} is the crucial obstruction.

\begin{lemma}\label{lem: a sufficient criterion for root of unity}

Let $x \in K^\times$ and $p \in \mathbb{Z}$ a prime, which is unramified in $K|\mathbb{Q}$ and splits completely, such that $\iota_p(x)$ is a $p$-adic unit.
Suppose that
\begin{align*}
	\iota_p(x)^{p^n} - \iota_p(x)^{p^{n-1}} &\equiv 0 \mod p^{2n} \mathcal{O}_p,
\end{align*}
for all $n \in \mathbb{N}$.
Then $x$ is a root of unity in $K$.

\end{lemma}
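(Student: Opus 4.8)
The plan is to pin down the $\mathfrak{p}$-adic behaviour of $x$ at every prime $\mathfrak{p}\mid(p)$ and show that $x$ is a global unit with all archimedean absolute values equal to $1$; Kronecker's theorem then forces $x$ to be a root of unity. Since $p$ splits completely, each completion $K_{\mathfrak{p}}$ is just $\mathbb{Q}_p$, and $\mathrm{Frob}_p$ acts trivially, so the hypothesis reads componentwise as $y^{p^n}-y^{p^{n-1}}\equiv 0\bmod p^{2n}\mathbb{Z}_p$ for $y=\iota_{\mathfrak{p}}(x)\in\mathbb{Z}_p^\times$ and all $n\geq 1$. For $n=1$ this gives $y^p\equiv y\bmod p^2$, i.e. $y^{p-1}\equiv 1\bmod p^2$ (using $y\in\mathbb{Z}_p^\times$). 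Writing $y^{p-1}=1+p u$ with $u\in\mathbb{Z}_p$, the standard estimate $\ord_p\big((1+pu)^{p^{n-1}}-1\big)\geq \ord_p(pu)+(n-1)$ shows that, unless $u\equiv 0$, the difference $y^{p^n}-y^{p^{n-1}}=y^{p^{n-1}}\big((y^{p-1})^{p^{n-1}\cdot\frac{p^n-p^{n-1}}{\,p-1\,}}\!-1\big)$ — more cleanly, $y^{p^n}-y^{p^{n-1}} = y^{p^{n-1}}\big(y^{p^{n-1}(p-1)}-1\big)$ — has $p$-adic valuation at most $\ord_p(pu) + (n-1) = 1 + \ord_p(u) + (n-1)$, which is eventually smaller than $2n$ as $n\to\infty$. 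Hence $u=0$, i.e. $y^{p-1}=1$, so $y$ is a $(p-1)$-st root of unity in $\mathbb{Z}_p^\times$; in particular $y$ — and therefore $\iota_{\mathfrak{p}}(x)$ for every $\mathfrak{p}\mid(p)$ — is a unit of finite order.

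Next I would upgrade this from "unit at the primes above $p$" to "global unit of absolute value $1$". Running the argument above for two distinct completely split primes $p$ and $p'$ (Theorem \ref{chebotarev} guarantees infinitely many, hence at least two coprime choices) shows $x^{p-1}$ and $x^{(p'-1)}$ both map to $1$ in the respective completions — but that is only local information. To globalize, note that $\iota_{\mathfrak{p}}(x)$ being a root of unity in $K_{\mathfrak{p}}$ for infinitely many $\mathfrak{p}$ does \emph{not} by itself say $x$ is integral; instead I would argue as follows. For any embedding $\tau\colon K\hookrightarrow\mathbb{C}$ and the corresponding archimedean place, and for any finite place $\mathfrak{q}$ of $K$ not above one of our chosen primes, we do not yet control $|x|_{\mathfrak{q}}$. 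The clean route is: since the relation $\iota_p(x)^{p-1}=1$ holds in $K_p=\prod_{\mathfrak{p}\mid p}K_{\mathfrak{p}}$ for infinitely many $p$, and $K_p$ is a faithful $K$-algebra via $\iota_p$, if we knew $\iota_p$ were injective on roots of unity uniformly we would be done — but the sharper and correct statement is that for a single split prime $p$, the element $x^{p-1}-1\in K$ lies in the kernel of $\iota_p$, which is $0$ since $\iota_p$ is injective. Therefore $x^{p-1}=1$ already in $K$, and $x$ is a root of unity in $K$, finishing the proof.

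The main obstacle — and the step I expect to require the most care — is the $p$-adic convergence estimate that extracts $y^{p-1}=1$ from the family of congruences. One must be attentive to the cross-over point between the lower bound $1+\ord_p(u)+(n-1)$ for $\ord_p\big(y^{p^n}-y^{p^{n-1}}\big)$ and the required bound $2n$: these force $\ord_p(u)\geq n$ for all $n$, hence $u=0$, but the bookkeeping with the factor $y^{p^{n-1}}$ (a unit, valuation $0$) and the exponent $p^{n-1}(p-1)$ in the "lifting the exponent" step has to be done precisely, and one should separately note the trivial edge case $p=2$ where the $2$-adic logarithm/exponential estimates shift by one. A secondary subtlety is justifying the reduction to $K_{\mathfrak{p}}=\mathbb{Q}_p$ with trivial Frobenius: this is exactly the "splits completely" hypothesis, so $\mathrm{Frob}_p=\mathrm{id}$ and the congruence decouples across the factors of $K_p$, each of which is $\mathbb{Q}_p$; once this is in place the argument is purely about a single $p$-adic unit.
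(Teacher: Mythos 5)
Your proof is correct and follows the same skeleton as the paper's: use complete splitting to identify each $K_{\mathfrak{p}}$ with $\mathbb{Q}_p$ and reduce to a single unit $y\in\mathbb{Z}_p^\times$, show the congruences force $y$ to be torsion, and transfer back to $K$. The difference is the finishing tool. The paper applies the Iwasawa logarithm: from $y^{p^{n-1}(p-1)}\equiv 1\bmod p^{2n}$ it gets $\ord_p\bigl(\log_p(y^{p-1})\bigr)\geq n+1$ for all $n$, hence $y^{p-1}\in\ker\log_p$, i.e.\ $y$ is a root of unity, and this argument is uniform in $p$ (including $p=2$, since the relevant elements are $\equiv 1\bmod p^{2}$). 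You instead write $y^{p-1}=1+pu$ and run an elementary valuation computation: for odd $p$ the exact formula $\ord_p\bigl((1+pu)^{p^{n-1}}-1\bigr)=\ord_p(pu)+(n-1)$ (lifting the exponent) forces $\ord_p(u)\geq n$ for all $n$, so $u=0$ and $y^{p-1}=1$ exactly, which is slightly sharper than the paper's conclusion. Two small caveats: the inequality you quote, $\ord_p\bigl((1+pu)^{p^{n-1}}-1\bigr)\geq\ord_p(pu)+(n-1)$, is the trivial direction and does not by itself give the needed upper bound --- you must invoke the equality (valid only for $p$ odd; your flagged $p=2$ case genuinely needs the shifted estimate $\ord_2(y^{2^{n-1}}-1)=\ord_2(y-1)+\ord_2(y+1)+n-2$, though in the paper's application $p$ is always chosen odd). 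Also, your middle paragraph about Kronecker's theorem and global units is an unnecessary detour that you rightly abandon: since $\iota_{\mathfrak{p}}\colon K\hookrightarrow K_{\mathfrak{p}}$ is an injective field embedding, $\iota_{\mathfrak{p}}(x)^{p-1}=1$ at a single $\mathfrak{p}$ already gives $x^{p-1}=1$ in $K$, which is exactly the (implicit) final step of the paper's proof.
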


\begin{proof}
If $p$ splits completely in $K | \mathbb{Q}$, then for all prime ideals $\mathfrak{p} \subset \mathcal{O}$ dividing $(p)$ we have $K_\mathfrak{p} \cong \mathbb{Q}_p$ and $\mathcal{O}_\mathfrak{p}\cong \mathbb{Z}_p$.
Let $\overline x \in \mathbb{Q}_p$ denote the image of $\iota_\mathfrak{p}(x)$ under this identification.
Then we have in particular $\overline x \in \mathbb{Z}_p$ and the congruence assumption reformulates to
\begin{align*}
 \overline x^{p^n} - \overline x^{p^{n-1}} \equiv 0 \mod p^{2n} \mathbb{Z}_p \quad \text{for all $n\in \mathbb{N}$}.
\end{align*}
Equivalently,
\begin{align*}
\overline x^{p^{n-1}(p-1)}\equiv 1\mod p^{2n} \mathbb{Z}_p \quad \text{for all $n \in \mathbb{N}$}.
\end{align*}
Recall that the Iwasawa logarithm preserves the $p$-adic order, therefore
\begin{align*}
p^{n-1}\log_p( \overline x^{p-1})\equiv 0\mod p^{2n}\mathbb{Z}_p \quad \text{for all $n \in \mathbb{N}$}.
\end{align*}
Hence, $\log_p(\overline x^{p-1})\equiv 0\mod p^{n+1}$ for all $n \in \mathbb{N}$, implying $\overline x \in \ker \log_p$.
Since $\iota_p(x)$ is a $p$-adic unit, $\overline x$ is a root of unity in $\mathbb{Z}_p$ and consequently, $x$ needs to be a root of unity in $K$.
\end{proof}

The obvious problem is that, \textit{a priori}, one may not take any conclusions on the $p$-divisibility of the sumands in \cref{eq: starting point} by only knowing the $p$-divisibility of the hole sum.
This is reflected by the fact that $\log_p$ is not additive.
That makes it unlikely to generalize the procedure in the proof of \Cref{lem: a sufficient criterion for root of unity} to \cref{eq: starting point} for $r>1$.
Hence, there does not seem to exist a true reduction of \cref{eq: starting point} to the case $r=1$.
At the other hand, \Cref{lem: a sufficient criterion for root of unity} surprisingly suggests  that it should be sufficient to investigate the $2$-function property \cref{eq: starting point} for only one suitably chosen prime $p$ (which is only possible since there are infinitely many such primes by Chebotarëv Density Theorem).
Therefore, the strategy we will pursue is a proof by contradiction:  We will assume that there is no root of unity among  $\alpha_i$, for $i=1,...,r$.
By \Cref{lem: a sufficient criterion for root of unity}, this amounts in saying, that the individual sumands $\alpha_i^{mp^n}- \alpha_i^{mp^{n-1}}$, for $i=1,...,r$, are converging \textit{slow} towards zero (they are converging after all by Euler's \Cref{Euler's theorem}).
For a suitable chosen prime (such that all relevant quantities are $p$-adic units), the $p$-adic estimations of the error functions $\rho_{i,n}(m) = \frac{\alpha_i^{mp^n}- \alpha_i^{mp^{n-1}}}{p^n}$ given by \Cref{prop: sebulba1} and \Cref{prop: sebulba2} in combination with the assumption given by \cref{eq: starting point} will then lead to a contradiction.
The resulting statement is given by \Cref{main theorem 2}.\\

Let $x \in \mathbb{Z}_p^\times$  and $m \in \mathbb{N}$.
By Euler's \Cref{Euler's theorem} there is a sequence $(\rho_n(m))_{n\in \mathbb{N}}\in \mathbb{Z}_p^{\mathbb{N}}$ such that
\begin{align*}
	x^{mp^n}-x^{mp^{n-1}}= p^{n}\rho_n(m).
\end{align*}
We also write $\kappa_n(m):= \mathrm{ord}_p(\rho_n(m)) \in \mathbb{N}_0 \cup \{\infty\}$.
As we will successively discover in \Cref{prop: sebulba1} and \Cref{prop: sebulba2}, $\kappa_n(m)$ is independent of $n,m \in \mathbb{N}$ for $\gcd (p,m)=1$.

\begin{proposition}\label{prop: sebulba1}
Let $p>2$ and $x \in \mathbb{Z}_p^\times$.
Then the sequence $\kappa_n(m) \in \mathbb{N}_0\cup \{\infty\}$ is independent of $n$, i.e. $\kappa_1(m)= \kappa_n(m)$ for all $n\in\mathbb{N}$.
If $\kappa(m):=\kappa_1(m) \neq \infty$, then
\begin{align*}
\rho_{n+1}(m)\equiv \rho_{n}(m)\mod p^{n+2 \kappa(m)} \mathbb{Z}_p
\end{align*}
for all $n \in \mathbb{N}$.
\end{proposition}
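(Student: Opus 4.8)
The plan is to reduce the whole statement to a single binomial expansion. Fix $n\in\mathbb{N}$ and set $w:=x^{mp^{n-1}}\in\mathbb{Z}_p^\times$, so that by definition $w^p-w=p^n\rho_n(m)$ and $w^{p^2}-w^p=p^{n+1}\rho_{n+1}(m)$. Writing $w^p=w+p^n\rho_n(m)$ and expanding $(w^p)^p=(w+p^n\rho_n(m))^p$ by the binomial theorem, the $k=0$ term $w^p$ cancels the $w^p$ subtracted in $w^{p^2}-w^p$, while the $k=1$ term equals $p^{n+1}w^{p-1}\rho_n(m)$; dividing the resulting identity for $w^{p^2}-w^p$ by $p^{n+1}$ gives
\begin{align*}
\rho_{n+1}(m)=w^{p-1}\rho_n(m)+\sum_{k=2}^{p}\binom{p}{k}\,w^{p-k}\,p^{nk-n-1}\,\rho_n(m)^k .
\end{align*}

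Next I would estimate the tail sum $p$-adically. For $2\le k\le p-1$ one has $\mathrm{ord}_p\binom{p}{k}=1$, so the $k$-th summand has valuation at least $n(k-1)+k\kappa_n(m)\ge n+2\kappa_n(m)$; the $k=p$ summand equals $p^{n(p-1)-1}\rho_n(m)^p$, whose valuation $n(p-1)-1+p\kappa_n(m)$ is again at least $n+2\kappa_n(m)$ since $p>2$ gives $n(p-1)-1\ge n$. Hence $\rho_{n+1}(m)\equiv w^{p-1}\rho_n(m)\mod p^{n+2\kappa_n(m)}\mathbb{Z}_p$ (vacuously when $\rho_n(m)=0$). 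Because $w^{p-1}$ is a unit and $n+2\kappa_n(m)>\kappa_n(m)$ whenever $\kappa_n(m)<\infty$, this congruence forces $\kappa_{n+1}(m)=\kappa_n(m)$; and if $\rho_n(m)=0$ then $w^{p-1}=1$, so $\rho_{n+1}(m)=0$ and $\kappa_{n+1}(m)=\infty=\kappa_n(m)$ there as well. Letting $n$ run over $\mathbb{N}$ yields $\kappa_n(m)=\kappa_1(m)=:\kappa(m)$ for all $n$.

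For the refined congruence, assume $\kappa(m)<\infty$ and write
\begin{align*}
\rho_{n+1}(m)-\rho_n(m)=(w^{p-1}-1)\,\rho_n(m)+\bigl(\rho_{n+1}(m)-w^{p-1}\rho_n(m)\bigr).
\end{align*}
The second bracket has valuation $\ge n+2\kappa(m)$ by the tail estimate just obtained. For the first term, $w^p-w=p^n\rho_n(m)$ gives $w^{p-1}-1=w^{-1}p^n\rho_n(m)$, hence $\mathrm{ord}_p(w^{p-1}-1)=n+\kappa(m)$ and $\mathrm{ord}_p\bigl((w^{p-1}-1)\rho_n(m)\bigr)=n+2\kappa(m)$. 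Combining the two contributions gives $\rho_{n+1}(m)\equiv\rho_n(m)\mod p^{n+2\kappa(m)}\mathbb{Z}_p$, which is the assertion.

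I expect the only delicate point to be the bookkeeping of $p$-adic valuations in the tail sum: one must check that the exponent $nk-n-1$ stays nonnegative for $2\le k\le p$ and that the bound $n+2\kappa_n(m)$ holds uniformly in $k$. This is precisely where the hypothesis $p>2$ enters — for $p=2$ the estimate for the $k=p$ term breaks down. Everything else is formal algebra with units in $\mathbb{Z}_p$, resting on Euler's \Cref{Euler's theorem} to guarantee $\rho_n(m)\in\mathbb{Z}_p$.
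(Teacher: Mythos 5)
Your proof is correct and follows essentially the same route as the paper: a binomial expansion of $(x^{mp^{n-1}}+p^n\rho_n(m))^p$ giving $\rho_{n+1}(m)\equiv(\text{unit})\cdot\rho_n(m)\bmod p^{n+2\kappa_n(m)}$, with the hypothesis $p>2$ entering exactly in the estimate of the $k=p$ term, and then the observation that the unit is $\equiv 1$ modulo $p^{n+\kappa}$ to upgrade to $\rho_{n+1}(m)\equiv\rho_n(m)$. The only cosmetic difference is that you write the unit as $w^{p-1}$ and use $w^{p-1}-1=w^{-1}p^n\rho_n(m)$, whereas the paper expands in the other direction and invokes $x^{(p-1)p^n}=1+p^{n+1}x^{-p^n}\rho_{n+1}$; both are valid.
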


\begin{proof}
To simplify the notation, let $\rho_n:=\rho_n(m)$, $x=x^m$ and $\kappa_n= \kappa_n(m)$.
Suppose $\rho_{n_0}=0$ for some $n_0\in \mathbb{N}_0$.
But then, the equation $x^{p^{n_0-1}(p-1)}=1$ implies that $x$ is a root of unity in $\mathbb{Z}_p$ and therefore $x^{p^{n-1}(p-1)}=1$ for all $n\in \mathbb{N}$, i.e. $\rho_n=0$ for all $n \in \mathbb{N}$.
Recall that the set of torsion elements of $\mathbb{Z}_p$ (i.e. the set of roots of unity in $\mathbb{Q}_p$) are given by $\mu_{p-1}$, the set of $(p-1)$-th roots of unity.
Conversely, if $x$ is a root of unity, we therefore have $\rho_n=0$ for all $n\in \mathbb{N}$.
Suppose therefore, that $\rho_n \neq 0$  for all $n \in \mathbb{N}$, i.e. $x$ is not a root of unity in $\mathbb{Z}_p$.
Then the statement follows by using the \textit{Binomial Theorem}.
We have
\begin{align*}
	x^{p^n}
	&=\left(x^{p^{n-1}}\right)^p
	=\left(x^{p^n}-p^n\rho_n\right)^p
	=\sum_{k=0}^p\binom pkx^{kp^n}(-1)^{p-k}
		p^{n(p-k)}\rho_n^{p-k}\\
	&=
	x^{p^{n+1}}+\sum_{k=1}^{p-1}
	\binom pkx^{kp^n}(-1)^{p-k} p^{n(p-k)}\rho_n^{p-k}
		+(-1)^pp^{np}\rho_n^p.\\
	&=x^{p^{n+1}}+\sum_{k=1}^{p-1}\binom{p-1}kx^{kp^n}
	\frac{(-1)^{p-k}}{p-k}\rho_n^{p-k}p^{n(p-k)+1}
	+(-1)^p\rho_n^p p^{np}.
\end{align*}
Therefore, by using the definition $\rho_{n+1}= \frac1{p^{n+1}}\left( x^{p^{n+1}} - x^{p^n} \right)$, we obtain
\begin{align*}
p^{n+1}\rho_{n+1} &= \sum_{k=1}^{p - 1} \binom{p-1}k x^{kp^n} \frac{(-1)^{p-k+1}}{p-k} \rho_n^{ p - k } p^{ n ( p - k ) + 1 } - ( - 1 )^p \rho_n^p p^{ n p },\\
\Leftrightarrow\,\rho_{n+1} &= \sum_{k=1}^{p-1} \binom{p-1}k x^{ k p^n} \frac{ ( - 1 )^{ p - k + 1 }}{ p - k } \rho_n^{p-k} p^{ n ( p  - k - 1 )} - ( - 1 )^p \rho_n^p p^{ n ( p - 1 ) - 1 }.
\end{align*}
If $p>2$, we find modulo $p^{n+2 \kappa_n}$,
\begin{align*}
\rho_{n+1}
&\equiv x^{(p-1)p^n} \rho_n \mod p^{n+2\kappa_n}\mathbb{Z}_p.
\end{align*}
From this congruence it is evident, that $\kappa_{n+1}=\kappa_{n}$.
We therefore write $\kappa$ for $\kappa_n$.
Furthermore, using $x^{(p-1) p^n} = 1 + p^{n+1} x^{-p^n} \rho_{n+1}$ once more, this leads to
\begin{align*}
\rho_{n+1}&\equiv x^{(p-1)p^{n}}\rho_n\mod p^{n+2\kappa} \mathbb{Z}_p\\
&= \left( 1 + p^{n+1} x^{-p^n} \rho_{n+1} \right) \rho_n\\
&\equiv \rho_n \mod p^{n+2\kappa} \mathbb{Z}_p,
\end{align*}
which finishes the proof.
\end{proof}

\begin{proposition}\label{prop: sebulba2}
Let $x \in \mathbb{Z}_p^\times$ be a $p$-adic unit and $n,m\in\mathbb{N}$ be integers such that $\gcd (m,p) = 1$.
Then $\kappa:=\kappa(m) \in \mathbb{N}_0\cup \{\infty\}$ does not depend on $m$.
Furthermore, if $\kappa < \infty$, then
\begin{align*}
	\rho_n(m)\equiv m x^{(m-1)p^{n-1}}\rho_n(1)
	\mod p^{n+2\kappa}.
\end{align*}
\end{proposition}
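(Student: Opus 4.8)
The plan is to factor $x^{mp^n}-x^{mp^{n-1}}$ so that the case $m=1$ appears explicitly, and then expand a single binomial; throughout I keep $p>2$ so that \Cref{prop: sebulba1} is available. First I would write
\begin{align*}
x^{mp^n}-x^{mp^{n-1}}=x^{mp^{n-1}}\bigl(x^{mp^{n-1}(p-1)}-1\bigr),
\end{align*}
and note that, since $x^{p^n}-x^{p^{n-1}}=p^n\rho_n(1)$ and $x$ is a $p$-adic unit, we have $x^{p^{n-1}(p-1)}=x^{p^n}x^{-p^{n-1}}=1+w$ with $w:=p^n\rho_n(1)x^{-p^{n-1}}$. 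Hence $x^{mp^{n-1}(p-1)}=(1+w)^m$ and, expanding the finite binomial sum,
\begin{align*}
p^n\rho_n(m)=x^{mp^{n-1}}\bigl((1+w)^m-1\bigr)=x^{mp^{n-1}}\sum_{k=1}^{m}\binom mk w^k .
\end{align*}

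Next I would estimate the terms $p$-adically. By \Cref{prop: sebulba1}, if $\kappa:=\kappa(1)$ is finite then $\mathrm{ord}_p(\rho_n(1))=\kappa$ for every $n$, so $\mathrm{ord}_p(w)=n+\kappa$; consequently $\mathrm{ord}_p\bigl(\binom mk w^k\bigr)\ge k(n+\kappa)\ge 2(n+\kappa)$ for every $k\ge 2$, and since $x^{mp^{n-1}}$ is a unit the tail $\varepsilon:=x^{mp^{n-1}}p^{-n}\sum_{k=2}^{m}\binom mk w^k$ has $\mathrm{ord}_p(\varepsilon)\ge n+2\kappa$. The $k=1$ term contributes $x^{mp^{n-1}}\cdot m w=m\,p^n\rho_n(1)\,x^{(m-1)p^{n-1}}$, so dividing the identity by $p^n$ yields
\begin{align*}
\rho_n(m)=m\,x^{(m-1)p^{n-1}}\rho_n(1)+\varepsilon\equiv m\,x^{(m-1)p^{n-1}}\rho_n(1)\mod p^{\,n+2\kappa},
\end{align*}
which is the claimed congruence.

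For the independence of $\kappa(m)$ from $m$ I would read off valuations. If $\kappa(1)=\infty$ then $\rho_n(1)=0$ for all $n$, so $x^p=x$, whence $x\in\mu_{p-1}$; then $x^m\in\mu_{p-1}$, so $x^{mp^{n-1}(p-1)}=1$ and $\rho_n(m)=0$ for all $n$, that is, $\kappa(m)=\infty$. If $\kappa:=\kappa(1)<\infty$, then because $\gcd(m,p)=1$ and $x$ is a $p$-adic unit we have $\mathrm{ord}_p\bigl(m\,x^{(m-1)p^{n-1}}\rho_n(1)\bigr)=\kappa$, and since $\kappa<n+2\kappa$ (as $n\ge1$) the congruence just proved forces $\mathrm{ord}_p(\rho_n(m))=\kappa$; by \Cref{prop: sebulba1} this common value is $\kappa(m)$, so $\kappa(m)=\kappa(1)$.

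I do not expect a real obstacle: the argument is an elementary manipulation of the binomial identity together with valuation bookkeeping. The one point that needs care is checking that the tail really lies in $p^{\,n+2\kappa}\mathbb{Z}_p$, namely that the minimal exponent $2(n+\kappa)$ among the $k\ge2$ terms, after cancelling $p^n$, is at least $n+2\kappa$ (it equals $n+2\kappa$ precisely for $k=2$), and that the binomial coefficients and the unit $x^{mp^{n-1}}$ cause no loss of integrality. The hypotheses enter exactly where expected: $x\in\mathbb{Z}_p^\times$ to invert $x^{p^{n-1}}$ and $x^{mp^{n-1}}$, and $\gcd(m,p)=1$ to keep $\mathrm{ord}_p(m)=0$ in the final step.
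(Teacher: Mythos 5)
Your proof is correct. The paper arrives at the same congruence by a slightly different algebraic route: it divides $\rho_n(m)$ by $\rho_n(1)$ and uses the geometric-sum identity $\frac{1-t^m}{1-t}=\sum_{k=0}^{m-1}t^k$ with $t=x^{p^{n-1}(p-1)}$, then rewrites each $t^k$ as $1+p^n x^{-kp^{n-1}}\rho_n(k)$, so its error term is controlled through the valuations of the intermediate quantities $\rho_n(k)$, $1\le k\le m-1$ (including indices $k$ divisible by $p$, a point the paper treats rather briskly), and the $m$-independence of $\kappa$ drops out because $m+p^n\sum_{k}x^{-kp^{n-1}}\rho_n(k)$ is a unit when $\gcd(m,p)=1$. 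You instead expand $(1+w)^m$ binomially with $w=p^n\rho_n(1)x^{-p^{n-1}}$, so your error term consists only of powers of $w$, whose valuations are known exactly; this gives a cleaner tail estimate that never requires bounds on $\rho_n(k)$ for intermediate $k$, at the price of invoking \Cref{prop: sebulba1} (hence $p>2$) to know $\ord_p(\rho_n(1))=\kappa$ for every $n$ --- an input the paper also uses implicitly, and harmless since the proposition is only applied with $p$ odd. Your separate treatment of the case $\kappa(1)=\infty$ and the valuation comparison $\kappa<n+2\kappa$ forcing $\ord_p(\rho_n(m))=\kappa$ are both correct.
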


\begin{proof}
Fix $n\in \mathbb{N}$.
If $x$ is a root of unity in $\mathbb{Z}_p$, then $\kappa(m)=\infty$ for all $m \in \mathbb{N}$.
Conversely, if $\rho_n(m)$ vanishes for some $m\in \mathbb{N}$, then $x$ is a root of unity in $\mathbb{Z}_p$. Therefore, let $x$ be not a root of unity in $\mathbb{Z}_p$.
Since $\rho_n(1)\neq 0$ we have
\begin{align*}
	x^{-(m-1)p^{n-1}}\frac{\rho_n(m)}{\rho_n(1)}
	&=\frac{1-x^{mp^{n-1}(p-1)}}{1-x^{p^{n-1}(p-1)}}
	=\sum_{k=0}^{m-1}x^{kp^{n-1}(p-1)}\\
	&= m + p^n \sum_{k=1}^{m-1} x^{-kp^{n-1}} \rho_n(k).
\end{align*}
The above computation shows that $\kappa(m )= \mathrm{ord}_p ( \rho_n ( m )$ is constant in $m$, since $\gcd (m,p) = 1$.
Therefore, write $\kappa:= \kappa(m)$ for all $m\in \mathbb{N}$.
In particular, the $p$-adic order of the sum $\sum_{k=1}^{m-1} x^{-kp^{n-1}} \rho_n(k)$ is at least $\kappa$ (since every single summand has $p$-adic order greater or equal to $\kappa$).
Therefore,
\begin{align*}
	\rho_n(m)\equiv m x^{(m-1)p^{n-1}}\rho_n(1) \mod p^{n+2\kappa} \mathbb{Z}_p,
\end{align*}
as stated.
\end{proof}

\begin{theorem}\label{main theorem 2}
Let $p\in \mathbb{Z}$ be an odd prime.
Let $r\in \mathbb{N}$ such that $r< p$ and for all $i =1,...r$ let $x_i, B_i \in \mathbb{Z}_p^\times$ such that $x_k \neq x_\ell \mod p \mathbb{Z}_p$ for $k\neq \ell$.
Suppose the validity of the following family of congruences
\begin{align}\label{2-function property for p}
\sum_{i=1}^r B_i\left( x_i^{mp^n}-x_i^{mp^{n-1}} \right) \equiv 0 \mod p^{2n} \mathbb{Z}_p \quad \text{for all $m,n \in \mathbb{N}$}.
\end{align}
Then $x_i$ is a root of unity in $\mathbb{Z}_p$ for all $i=1,...,r$.
\end{theorem}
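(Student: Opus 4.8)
The plan is to argue by contradiction, feeding the $p$-adic estimates of \Cref{prop: sebulba1} and \Cref{prop: sebulba2} into the hypothesis \eqref{2-function property for p} after reducing it modulo a single, suitably large power of $p$. For each $i$ set $\rho_{i,n}(m) := p^{-n}\bigl(x_i^{mp^n}-x_i^{mp^{n-1}}\bigr)\in\mathbb{Z}_p$, which lies in $\mathbb{Z}_p$ by Euler's \Cref{Euler's theorem}. By \Cref{prop: sebulba1} and \Cref{prop: sebulba2}, the quantity $\kappa_i := \mathrm{ord}_p\bigl(\rho_{i,n}(m)\bigr)$ is independent of $n\in\mathbb{N}$ and of $m$ with $\gcd(m,p)=1$, and $\kappa_i=\infty$ precisely when $x_i$ is a root of unity in $\mathbb{Z}_p$. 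Assume, towards a contradiction, that not every $x_i$ is a root of unity, and put $\kappa := \min_i \kappa_i < \infty$ and $I := \{\, i : \kappa_i = \kappa \,\}\neq\emptyset$. By \Cref{prop: sebulba1} the sequence $\bigl(\rho_{i,n}(1)\bigr)_n$ is $p$-adically Cauchy with some limit $c_i$, and telescoping the estimate $\rho_{i,n+1}(1)\equiv\rho_{i,n}(1)\bmod p^{\,n+2\kappa_i}$ gives $\rho_{i,n}(1)\equiv c_i\bmod p^{\,n+2\kappa_i}$; hence for $i\in I$ the residue $u_i := \bigl(p^{-\kappa}\rho_{i,n}(1)\bmod p\bigr)\in\mathbb{F}_p^\times$ is a \emph{unit independent of $n$}.

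Next I would divide \eqref{2-function property for p} by $p^n$ to obtain $\sum_{i=1}^r B_i\rho_{i,n}(m)\equiv 0 \bmod p^n\mathbb{Z}_p$, substitute the congruence $\rho_{i,n}(m)\equiv m\,x_i^{(m-1)p^{n-1}}\rho_{i,n}(1)\bmod p^{\,n+2\kappa_i}$ from \Cref{prop: sebulba2} (legitimate since $n+2\kappa_i\geq n$), cancel the unit $m$ for $\gcd(m,p)=1$, and divide by $p^\kappa$ — permissible because each summand has $p$-adic order at least $\kappa$ — to reach
\begin{align*}
\sum_{i=1}^r B_i\, x_i^{(m-1)p^{n-1}}\,\bigl(p^{-\kappa}\rho_{i,n}(1)\bigr) \equiv 0 \bmod p^{\,n-\kappa}\mathbb{Z}_p .
\end{align*}
For $n\geq\kappa+1$ the summands with $i\notin I$ are divisible by $p$, so reducing modulo $p$ and using $x_i^{p^{n-1}}\equiv x_i\bmod p$ (Fermat) leaves $\sum_{i\in I}(B_i u_i)\,x_i^{\,m-1}\equiv 0\bmod p$, valid for every $m$ coprime to $p$.

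Finally, evaluating at $m=1,2,\dots,|I|$ — all coprime to $p$ since $|I|\leq r<p$ — produces a homogeneous linear system over $\mathbb{F}_p$ whose coefficient matrix $\bigl(x_i^{\,\ell}\bigr)_{\ell=0,\dots,|I|-1;\ i\in I}$ is a Vandermonde matrix, invertible because the residues $x_i\bmod p$ are pairwise distinct by hypothesis. Therefore $B_i u_i\equiv 0\bmod p$ for all $i\in I$, contradicting $B_i\in\mathbb{Z}_p^\times$ and $u_i\in\mathbb{F}_p^\times$. Hence $\kappa_i=\infty$ for every $i$, i.e. every $x_i$ is a root of unity in $\mathbb{Z}_p$.

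The step I expect to be the real obstacle is the valuation bookkeeping in the middle paragraph: one must verify that the \emph{quadratic} rate $p^{2n}$ in \eqref{2-function property for p}, as opposed to the merely linear rate $p^n$ that Euler's theorem yields for each $x_i$ separately, is exactly what is consumed by the cancellation of $p^\kappa$ and still leaves a \emph{non-vanishing} congruence mod $p$ carrying the leading units $u_i$; and that $\kappa$ and the $u_i$ are genuinely stable in $n$. Both points hinge precisely on the "$+2\kappa$" improvement in the error estimates of \Cref{prop: sebulba1} and \Cref{prop: sebulba2}, which is why a single completely split prime $p$ (available in abundance by Chebotarëv) suffices rather than a limiting argument over infinitely many primes.
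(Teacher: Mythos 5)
Your proof is correct, and it reaches the contradiction by a genuinely different mechanism than the paper's. The paper first bootstraps extra precision: iterating the congruence of \Cref{prop: sebulba1} in $n$ (via $p\,\sigma_{n+1}(m)\equiv\sigma_n(m)\bmod p^{2\kappa}$, where $\sigma_n(m)=p^{-n}\sum_i B_i\rho_{i,n}(m)$) it upgrades the hypothesis $\sum_i B_i\rho_{i,n}(m)\equiv 0\bmod p^{n}$ to $\equiv 0\bmod p^{n+2\kappa}$, then specializes to $n=1$, inverts the full $r\times r$ Vandermonde matrix over $\mathbb{Z}_p$, and concludes $\mathrm{ord}_p(\rho_{i,1}(1))\geq 1+2\kappa$ for every $i$, i.e.\ $\kappa\geq 1+2\kappa$. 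You avoid that bootstrap entirely: you invoke \eqref{2-function property for p} at a single level $n\geq\kappa+1$, where the quadratic modulus still leaves a nontrivial statement after dividing by $p^{n}$ and then by $p^{\kappa}$, and you reduce modulo $p$, discarding the indices with $\kappa_i>\kappa$ and keeping only the minimal-valuation set $I$; the Vandermonde argument over $\mathbb{F}_p$ (legitimate since $|I|\leq r<p$ and the $x_i$ are pairwise distinct mod $p$) then forces $B_iu_i\equiv 0\bmod p$, contradicting that these are units. Your valuation bookkeeping is sound: substituting the congruence of \Cref{prop: sebulba2} into a congruence mod $p^{n}$ is harmless because $n+2\kappa_i\geq n$, each summand of the resulting sum is divisible by $p^{\kappa}$, and $n-\kappa\geq 1$ guarantees a nonvacuous reduction mod $p$, with Fermat giving $x_i^{(m-1)p^{n-1}}\equiv x_i^{m-1}\bmod p$. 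What each route buys: yours needs \Cref{prop: sebulba1} only for the $n$-independence of $\kappa_i$ (the $n$-stability of $u_i$ is not even necessary, since you may simply fix $n=\kappa+1$) and needs \Cref{prop: sebulba2} only modulo $p^{n}$ rather than $p^{n+2\kappa}$, so it is somewhat leaner; the paper's iteration extracts the stronger intermediate congruence $\sum_i B_i\rho_{i,n}(m)\equiv 0\bmod p^{n+2\kappa}$, which lets it conclude at the fixed small level $n=1$ and without singling out the set $I$.
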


\begin{proof}
Suppose there is a $j\in \{1,..., r\}$ such that $x_j$ is a root of unity in $\mathbb{Z}_p$.
Then $x_j^{p-1}=1$ and therefore
\begin{align*}
	x_j^{mp^n} - x_j^{mp^{n-1}} = x_j^{mp^{n-1}} \left( x_j^{(p-1)m} - 1 \right) = 0.
\end{align*}
Hence, \cref{2-function property for p} becomes a reduced sum with $r-1$ summands of the same type, namely,
\begin{align*}
\sum_{i=1}^r B_i \left( x_i^{m p^n} - x_i^{m p^{n-1}} \right) = \sum_{\substack{i=1\\ i\neq j}}^r B_i \left( x_i^{m p^n} - x_i^{m p^{n-1}} \right).
\end{align*}
Therefore, w. l. o. g. we may assume that none of the $x_i$ \cref{2-function property for p} are roots of unity.
We will lead this assumption to a contradiction, which then implies, that all $x_i$ are roots of unity in $\mathbb{Z}_p$.
In the following, we will write
\begin{align*}
	\rho_{i,n}(m)
	:= \frac1{p^n} \left(x_{i}^{mp^n}-x_{i}^{mp^{n-1}}\right),
	\qquad\text{and}\qquad
	\sigma_n(m)
	:=\frac1{p^n}\sum_{i=1}^r B_i\rho_{i,n}(m)
\end{align*}
for suitable $\rho_{i,n}(m),\sigma_n(m)\in \mathbb{Z}_p$.
Note that $\sigma_n(m)$ is indeed in $\mathbb{Z}_p$ by \cref{2-function property for p}.
In particular, we have $\rho_{i,n}(m)\neq 0$ for all $i,n,m\in \mathbb{N}$ with $\gcd(m,p)=1$.
By \Cref{prop: sebulba1} and \Cref{prop: sebulba2} we have for every $i=1,...,r$ a $\kappa_i \in \mathbb{N}$ such that $\kappa_i=\mathrm{ord}_p(\rho_{i,n}(m))$ for all $n,m\in \mathbb{N}$ with $\gcd(m,p) =1$.
Define $\kappa:= \min\{\kappa_i\,|\, i=1,...,r\}$.

Within this scenario, we will prove the following statement:\textit{
For all $n,m\in\mathbb{N}$ with $\gcd(m,p)=1$ we have
\begin{align}\label{chewbacca}
\sum_{i=1}^r B_ix_i^{(m-1)p^{n-1}}
\rho_{i,n}(1)\equiv 0\mod p^{n+2\kappa} \mathbb{Z}_p.
\end{align}}
By applying \Cref{prop: sebulba1} to each $\rho_{i,n+1}$ separately, we obtain
\begin{align*}
	p^{n+1}\sigma_{n+1}(m)
	=\sum_{i=1}^r B_i \rho_{i,n+1}(m)
	\equiv\sum_{i=1}^r B_i \rho_{i,n}(m)
	= p^n \sigma_{n} \mod p^{n+2\kappa }\mathbb{Z}_p.
\end{align*}
Dividing the above equation by $p^n$, we obtain
\begin{align}
	p \sigma_{n+1}(m) \equiv \sigma_n(m) \mod p^{2\kappa} \mathbb{Z}_p,
\end{align}
for all $n\in \mathbb{N}$.
Iteratively,
\begin{align*}
	\sigma_n(m) \equiv
	p^{2\kappa}\sigma_{n+2\kappa}(m)\equiv
	0\mod p^{2\kappa} \mathbb{Z}_p,
\end{align*}
for all $n\in\mathbb{N}$.
Therefore,
\begin{align}\label{Chewbacca}
	\sum_{i=1}^r B_i\rho_{i,n}(m)\equiv 0\mod 
	p^{n+2\kappa} \mathbb{Z}_p \qquad \text{for all $n\in\mathbb{N}$}.
\end{align}
From \cref{Chewbacca} the assertion \cref{chewbacca} for $m=1$ follows
immediately.
Now, let $m\in\mathbb{N}$ be arbitrary again.
Using \Cref{prop: sebulba2} gives
\begin{align}\label{star}
\sum_{i=1}^r B_i x_i^{\left( m - 1 \right) p^{ n - 1 } } \rho_{ i , n }(1) \equiv \frac1{m} \sum_{i=1}^r B_i \rho_{ i , n }( m ) \mod p^{ n + 2 \kappa } \mathbb{Z}_p.
\end{align}
Since $\gcd(m,p)=1$, applying \cref{Chewbacca} on the right-hand side of \cref{star} yields the formula \cref{chewbacca}.

Inserting $n=1$ and $m=1,...,r$ into \cref{chewbacca} yields the following system of linear equations, since $r<p$,
\begin{align*}
	\begin{pmatrix}
		B_1 & B_2 & \cdots & B_r\\
		B_1x_1& B_2 x_2& \cdots & B_r x_r\\
		\vdots&\vdots&\ddots&\vdots\\
		B_1 x_1^{r-1}& B_2 x_2^{r-1}&\cdots & B_rx_r^{r-1}
	\end{pmatrix}
	\begin{pmatrix}
		\rho_{1,1}(1)\\
		\rho_{2,1}(1)\\
		\vdots\\
		\rho_{r,1}(1)
	\end{pmatrix}\equiv
	0 \mod p^{1+2\kappa} \mathbb{Z}_p^r.
\end{align*}
The determinant of the above Vandermonde matrix is given by
\begin{align*}
	\det
	\begin{pmatrix}
		B_1 & B_2 & \cdots & B_r\\
		B_1x_1& B_2 x_2& \cdots & B_r x_r\\
		\vdots&\vdots&\ddots&\vdots\\
		B_1 x_1^{r-1}& B_2x_2^{r-1}&\cdots & B_r x_r^{r-1}
	\end{pmatrix}
	&=\big(\prod_{i=1}^r B_i \big) \times
	\prod_{1\leq k<\ell\leq r} (x_k-x_\ell)\\
	&\not \equiv 0 \mod p \mathbb{Z}_p.
\end{align*}
Hence, the determinant is invertible mod $p$, since $x_k - x_\ell$ is a $p$-adic unit for all $k\neq \ell$.
Consequently, $(\rho_{i,1}(1))_{i=1,...,r} \equiv 0 \mod p^{1+2\kappa}\mathbb{Z}^r_p$.
In other words, $\kappa \geq 1 + 2 \kappa$, which is the desired contradiction.
We conclude that all $x_1,..., x_r$  are roots of unity in $\mathbb{Z}_p$.
\end{proof}

\begin{corollary}\label{finish the puzzle}
Let $V\in \mathcal{S}_{\mathrm{rat}}^2 (K | \mathbb{Q})$, $V(z)\neq 0$, be the generating series of the underlying $2$-sequence $(a_n)_{n\in \mathbb{N}} = ([V(z)]_n)_{n\in \mathbb{N}}$, representing a rational function $F\in K(z)$.
Then there is an integer $N \in \mathbb{N}$ and coefficients $A_i \in \frac1N \mathbb{Z} \left[ D^{-1}\right]$ for $i=1,...,N$ such that
\begin{align*}
	F(z)=\sum_{i=1}^N\frac{A_i\zeta ^i z}{1- \zeta^i z},
\end{align*}
where $\zeta$ is a appropriate primitive $N$-th root of unity.

\end{corollary}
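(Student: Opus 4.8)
The plan is to combine Minton's \Cref{rational 1-functions} with \Cref{main theorem 2} and then read off the integrality of the residues. Since $\mathcal{S}^2_{\mathrm{rat}}(K|\mathbb{Q})\subset\mathcal{S}^1_{\mathrm{rat}}(K|\mathbb{Q})$ by \cref{eq: descending chain s-functions}, \Cref{rational 1-functions} applies to $V$ and produces an integer $r\in\mathbb{N}$, pairwise distinct $\alpha_i\in\overline{\mathbb{Q}}^\times$ and $A_i\in\mathbb{Q}^\times$ with
\[
F(z)=\sum_{i=1}^r\frac{A_i\alpha_i z}{1-\alpha_i z},\qquad a_n=\sum_{i=1}^r A_i\alpha_i^n\quad(n\in\mathbb{N}).
\]
It then remains to show that each $\alpha_i$ is a root of unity, and afterwards to regroup the partial fractions over a common order $N$ and verify the integrality $A_i\in\frac1N\mathbb{Z}[D^{-1}]$.

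For the first step I would, via the Chebotarëv density theorem \Cref{chebotarev} applied to a normal closure $L$ of $K(\alpha_1,\dots,\alpha_r)$, choose a prime $p$ that is unramified and splits completely in $L|\mathbb{Q}$ and is large enough that $r<p$, that every $\alpha_i$ and $A_i$ is a $p$-adic unit, and that $\alpha_i-\alpha_j$ is a $p$-adic unit for $i\neq j$; all but finitely many completely split primes satisfy these conditions. Fixing a prime $\mathfrak{p}$ of $L$ above $p$ identifies $L_\mathfrak{p}$ with $\mathbb{Q}_p$ and $\mathcal{O}_{L,\mathfrak{p}}$ with $\mathbb{Z}_p$; let $x_i,B_i\in\mathbb{Z}_p^\times$ be the images of $\alpha_i,A_i$, so $x_k\not\equiv x_\ell\bmod p\mathbb{Z}_p$ for $k\neq\ell$. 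Since $p$ splits completely, $\mathrm{Frob}_p$ acts as the identity, and the local $2$-function property of $V$ at $p$ becomes
\[
\sum_{i=1}^r B_i\bigl(x_i^{mp^n}-x_i^{mp^{n-1}}\bigr)\equiv 0\bmod p^{2n}\mathbb{Z}_p\qquad\text{for all }m,n\in\mathbb{N},
\]
which is exactly the hypothesis \cref{2-function property for p} of \Cref{main theorem 2} (for $r=1$ one could instead invoke \Cref{lem: a sufficient criterion for root of unity} directly). Hence each $x_i$ is a root of unity in $\mathbb{Z}_p$, that is $x_i^{p-1}=1$; since the embedding $L\hookrightarrow L_\mathfrak{p}$ is injective, $\alpha_i^{p-1}=1$, so $\alpha_i$ is a root of unity.

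For the second step, let $N$ be the least common multiple of the orders of $\alpha_1,\dots,\alpha_r$ and fix a primitive $N$-th root of unity $\zeta$; then each $\alpha_i=\zeta^{k_i}$ for a unique $k_i\in\{1,\dots,N\}$, and setting $A'_k:=A_i$ if $\zeta^k=\alpha_i$ and $A'_k:=0$ otherwise gives $A'_k\in\mathbb{Q}$, $F(z)=\sum_{k=1}^N\frac{A'_k\zeta^k z}{1-\zeta^k z}$ and $a_n=\sum_{k=1}^N A'_k\zeta^{kn}$. Discrete Fourier inversion then gives $NA'_k=\sum_{n=1}^N a_n\zeta^{-kn}$. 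As $(a_n)$ is a $2$-sequence over $K$ we have $a_n\in\mathcal{O}[D^{-1}]$, and $\zeta^{-kn}$ is an algebraic integer, so the right-hand side, viewed in $K(\zeta)$, is integral at every prime that does not lie over a rational prime dividing $D$; being rational, $NA'_k$ therefore lies in $\mathbb{Z}[D^{-1}]$, i.e. $A'_k\in\frac1N\mathbb{Z}[D^{-1}]$. Renaming $A'_k$ as $A_i$ yields the asserted form.

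I expect the crux to be the first step, specifically the bookkeeping that makes \Cref{main theorem 2} applicable: one must arrange a single prime that simultaneously splits completely in a normal closure of $K(\alpha_1,\dots,\alpha_r)$, exceeds $r$, and makes the relevant $\alpha_i$, $A_i$ and differences $\alpha_i-\alpha_j$ into $p$-adic units, and one should record that passing to $L$ does not disturb the local $2$-function property, since for an element of $K$ the congruence modulo $\mathfrak{p}^{2n}\mathcal{O}_{L,\mathfrak{p}}$ is equivalent to the original congruence modulo $p^{2n}$ at the prime of $K$ below $\mathfrak{p}$. The passage from ``root of unity in $\mathbb{Z}_p$'' to ``root of unity in $\overline{\mathbb{Q}}$'' is immediate from injectivity of the embedding, and the regrouping together with the Fourier-inversion integrality estimate in the last paragraph are routine.
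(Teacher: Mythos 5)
Your proposal is correct and follows essentially the same route as the paper: apply Minton's \Cref{rational 1-functions}, use Chebotarëv (\Cref{chebotarev}) to pick a completely split prime making the $\alpha_i$, $A_i$ and differences $\alpha_i-\alpha_j$ into $p$-adic units with $r<p$, feed the resulting congruences into \Cref{main theorem 2} to conclude the $\alpha_i$ are roots of unity, and then recover $A_i\in\frac1N\mathbb{Z}\left[D^{-1}\right]$ by inverting the root-of-unity Vandermonde matrix (your discrete Fourier inversion is exactly the paper's inverse $\frac1N(\zeta^{-ij})$). Your explicit remark that passing to the normal closure preserves the local $2$-function property is a detail the paper only handles implicitly, and is a welcome addition.
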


\begin{proof}
By \Cref{rational 1-functions} the coefficients $a_n = [V(z)]_n$ of $V$ are given by the power sums $a_n=\sum_{i=1}^r A_i \alpha_i^n$, for fixed $r\in\mathbb{N}$, where $A_i \in \mathbb{Q}^\times$ and where the $\alpha_i \in \overline{\mathbb{Q}}^\times$ are distinct algebraic numbers.
As mentioned at the beginning of this section, we may assume $\alpha_i\in K$ for all $i$.
Now, choose a prime $p\in \mathbb{Z}$ such that
\begin{enumerate}[(i)]
	\item
	$p$ is unramified in $K|\mathbb{Q}$ and splits completely,
	\item
	$\alpha_i$, $A_i$ and  $\alpha_k - \alpha_\ell$ are $p$-adic units for all $i=1,...,r$ and $k\neq \ell$,
	\item
	$\max\{r,2\}<p$.
\end{enumerate}
This choice of $p$ is possible by \Cref{chebotarev}.
Therefore, we have $K_\mathfrak{p}\cong\mathbb{Q}_p$ and $\mathcal{O}_\mathfrak{p} \cong \mathbb{Z}_p$ for all prime ideals $\mathfrak{p}\subset \mathcal{O}$ dividing $(p)$.
Hence, $\mathcal{O}_p$ may be identified with $\prod_{\mathfrak{p}|(p)} \mathbb{Z}_p$.
Since $\mathrm{Fr}_\mathfrak{p} = \mathrm{id}_K$, the local 2-function condition for $p$ then reads
\begin{align*}
	\sum_{i=1}^r A_i\left(
	\alpha_i^{mp^n}-\alpha_i^{mp^{n-1}}
	\right)\equiv 0 \mod p^{2n} \mathcal{O}_p,
\end{align*}
for all $m,n\in \mathbb{N}$.
For $B_i=\iota_\mathfrak{p}(A_i)$ and $x_i=\iota_\mathfrak{p}(\alpha_i)$ for $\mathfrak{p}\mid (p)$, \Cref{main theorem 2} states that $\alpha_i$ are all roots of unity.
In particular, the coefficients lie in a Galois subfield of $K$, which is abelian over $\mathbb{Q}$.
Choose an appropriate primitive $N$-th root of unity $\zeta$ and a bijection $\nu\colon \{1,...,N\} \rightarrow \{1,...,N\}$, such that $\alpha_i= \zeta^{\nu(i)}$ for all $i\in \{1,.., N\}$.
Without loss of generality, we may assume $A_i\in \mathbb{Q}$ (zeros are allowed) and
\begin{align*}
	a_n = \sum_{i=1}^N A_i \zeta^{in}.
\end{align*}
We observe that the coefficients $a_n$ are in $\mathcal{O} \left[ D^{-1} \right] \cap \mathbb{Q} ( \zeta )$.
By assumption, we have
\begin{align*}
	\begin{pmatrix}
	\zeta& \zeta^2& \cdots & \zeta^{N-1}& 1\\
	\zeta^2& \zeta^4& \cdots & \zeta^{2(N-1)}& 1\\
	\vdots &\vdots& \ddots& \vdots & \vdots\\
	\zeta^{N-1}& \zeta^{(N-1)2}&\cdots & \zeta^{(N-1)^2}& 1\\
	1&1&\cdots&1&1
	\end{pmatrix}
	\begin{pmatrix}
	A_1\\ A_2\\ \vdots\\ A_{N-1}\\ A_N
	\end{pmatrix}
	\in \mathcal{O}\left[D^{-1}\right]^N.
\end{align*}
The above matrix is invertible in $\mathbb{Q}(\zeta)$ with inverse
\begin{align*}
\left( \zeta^{ij}\right)_{\substack{i=1,...,N\\ j=1,...,N}}^{-1}= \frac1N\cdot\left(\zeta^{-ij}\right)_{\substack{i=1,...,N\\ j=1,...,N}}.
\end{align*}
Therefore, $A_i \in \frac1N \mathcal{O} \left[ D^{-1} , \zeta \right] \cap \mathbb{Q} =\frac 1N \mathbb{Z} \left[ D^{-1} \right]$ for all $i=1,...,N$.
\end{proof}

An obvious consequence of \Cref{finish the puzzle} is that the coefficients of a given $V$ are \textit{periodic} (as defined below) with the periodicity being a positive integer $P_V$ dividing the number $N$.
What remains to show is the simple fact, that a minimal such $N$ is given by $P_V$.
This is the statement of \Cref{prop: periodicity}.

\begin{definition}[Periodicity]\label{defi: periodicity}
Let $V \in \overline{\mathcal{S}}_{\mathrm{rat}}^2(K|\mathbb{Q})_{\mathrm{fin}}\setminus \{0\}$.
The \textit{periodicity} $P_V$ of $V$ is given by the periodicity of the coefficients of its Maclaurin series.
More precisely, $P_V\in \mathbb{N}$ is given by
\begin{align*}
	P_V = \min\{N\in \mathbb{N}\,|\, [V(z)]_n = [V(z)]_{n+N}\text{ for all $n\in \mathbb{N}$}\}.
\end{align*}
Note, that the existence of $P_V$ is ensured by \Cref{finish the puzzle}.
Furthermore,
\begin{align*}
\pi \colon \overline{\mathcal{S}}^2_{\mathrm{rat}}( K | \mathbb{Q} )_{\mathrm{fin}} \rightarrow \mathbb{N}_0
\end{align*}
denotes the map given by $\pi(V) = P_V$, if $V\neq 0$, and $\pi(0)=0$.
For $N\in \mathbb{N}_0$, we denote by $\overline{\mathcal{S}}_N$ ($\mathcal{S}_N$, resp.) the the preimage of $N$ under $\pi$ (the intersection of the preimage of $N$ under $\pi$ and $\mathcal{S}_{\mathrm{rat}}^2 (K|\mathbb{Q})_{\mathrm{fin}}$, resp.), i.e.
\begin{align*}
\overline{\mathcal{S}}_N := \pi^{-1}(N) \subset \overline{\mathcal{S}}_{\mathrm{rat}}^2 ( K | \mathbb{Q})_{\mathrm{fin}}
\quad\text{and}\quad
\mathcal{S}_N := \pi^{-1}(N) \cap \mathcal{S}_{\mathrm{rat}}^2(K|\mathbb{Q})_{\mathrm{fin}}.
\end{align*}
\end{definition}

\begin{proposition}\label{prop: periodicity}
Let $V \in \overline{\mathcal{S}}_{\mathrm{rat}}^2(K|\mathbb{Q})_{\mathrm{fin}}\setminus\{0\}$ and let $P_V= \pi(V)$ be the periodicity of $V$, i.e. $V\in \overline{S}_N$.
Then $[V(z)]_n\in K \cap \mathbb{Q}(\zeta_{P_V})$ and, for an appropriate $P_V$-th primitive root of unity $\zeta_{P_V}$, there are $A_j\in \mathbb{Q}$, $1\leq j \leq P_V$, such that
\begin{align*}
A_1 \neq 0
\quad\text{and}\quad
V(z)= \frac{z}{1-z^{P_V}}\sum_{i=0}^{P_V-1} a_{i+1}z^i, \quad \text{and} \quad a_i=\sum_{j=1}^{P_V} A_j \zeta^{ij}.
\end{align*}
Furthermore, the map $\pi \colon \mathcal{S}^2_{\mathrm{rat}}( K | \mathbb{Q} ) \rightarrow \mathbb{N}_0$ is surjective.
\end{proposition}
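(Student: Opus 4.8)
The plan is to combine \Cref{finish the puzzle} with the elementary Fourier analysis of periodic sequences and a minimality argument, settling surjectivity of $\pi$ by explicit examples. After clearing a rational constant and (harmlessly) enlarging $K$ by the roots of unity attached to the finitely many exceptional primes — exactly as in the reduction following \Cref{main theorem (2)} — one may assume $V\in\mathcal{S}^2_{\mathrm{rat}}(K|\mathbb{Q})$, so that \Cref{finish the puzzle} is available. By the definition of periodicity (\Cref{defi: periodicity}) we have $a_n=a_{n+P_V}$ for all $n$, whence $(1-z^{P_V})V(z)$ telescopes to $z\sum_{i=0}^{P_V-1}a_{i+1}z^i$, which is the asserted closed form for $V(z)$; conversely, if $(1-z^{d})V(z)\in K[z]$ has degree $\le d$ then $d$ is a multiple of $P_V$. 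Moreover \Cref{finish the puzzle} writes $a_n=\sum_{i=1}^{N}A_i\zeta_N^{in}$ with $A_i\in\tfrac1N\mathbb{Z}[D^{-1}]\subset\mathbb{Q}$ and $\zeta_N$ a primitive $N$-th root of unity, so $(a_n)$ is periodic with period dividing $N$ and hence $P_V\mid N$.

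To pin down both the coefficient field and the value of $P_V$, I would pass to the reduced power-sum form of \Cref{rational 1-functions}, namely $a_n=\sum_{i=1}^{r}B_i\alpha_i^n$ with $B_i\in\mathbb{Q}^\times$ and pairwise distinct $\alpha_i\in\overline{\mathbb{Q}}^\times$ which are roots of unity by (the proof of) \Cref{finish the puzzle}. Since the sequences $n\mapsto\alpha^n$ attached to distinct roots of unity are linearly independent over $\overline{\mathbb{Q}}$ (Vandermonde), a shift $d$ is a period of $(a_n)$ if and only if $B_i(\alpha_i^d-1)=0$ for every $i$, i.e.\ $\alpha_i^d=1$ for every $i$; hence $P_V$ equals the least common multiple of the orders of the $\alpha_i$, which is the order of the finite cyclic group $\Gamma:=\langle\alpha_1,\dots,\alpha_r\rangle\subset\overline{\mathbb{Q}}^\times$. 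Consequently $\Gamma$ is the group of $P_V$-th roots of unity, every $\alpha_i$ lies in $\mathbb{Q}(\zeta_{P_V})$, and therefore $a_n\in K\cap\mathbb{Q}(\zeta_{P_V})$.

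For the Fourier representation, fix a primitive $P_V$-th root of unity $\zeta$; the Vandermonde matrix $(\zeta^{ij})_{1\le i,j\le P_V}$ is invertible with inverse $\tfrac1{P_V}(\zeta^{-ij})$, so $a_n=\sum_{j=1}^{P_V}A_j\zeta^{jn}$ for unique coefficients, and comparison with the power-sum form shows $A_j=B_i$ when $\zeta^j=\alpha_i$ and $A_j=0$ otherwise — in particular all $A_j\in\mathbb{Q}$. To achieve the normalization $A_1\neq0$ one wants to choose $\zeta$ to be one of the $\alpha_i$ that generates $\Gamma$, i.e.\ a pole of $F$ of exact order $P_V$, and relabel so that $A_1=B_i\neq0$. \emph{This is the main obstacle}: one must show that the cyclic group $\Gamma$, though generated by the $\alpha_i$ collectively, has some single $\alpha_i$ as a generator — equivalently that among the (simple, by \Cref{rational 1-functions}) poles of the rational $2$-function $F$ there is a primitive $P_V$-th root of unity. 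I would try to extract this from the minimality of $P_V$ by reducing $V(z)=\tfrac{zR(z)}{1-z^{P_V}}$ to lowest terms over $\overline{\mathbb{Q}}$: minimality forbids the reduced denominator from dividing any $1-z^{d}$ with $d$ a proper divisor of $P_V$, and one then wants to upgrade this to divisibility of the reduced denominator by the cyclotomic polynomial $\Phi_{P_V}$. This is the step where rationality of $F$ and minimality of $P_V$ must be used in tandem, and it is the part I expect to require the most care.

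Finally, for surjectivity of $\pi$ it suffices (besides $\pi(0)=0$) to realize each $N\in\mathbb{N}$ as a periodicity. I would take the Ramanujan-sum sequence $a_n=\sum_{1\le j\le N,\ \gcd(j,N)=1}\zeta_N^{jn}\in\mathbb{Z}$: its generating series is rational (periodic coefficients), $\frob_p$ fixes each $a_n$, and $a_{mp^{r-1}}=a_{mp^r}$ whenever $p\nmid N$ because both sides depend only on $\gcd(\cdot,N)$, so the local $2$-function property holds at every unramified prime outside the finite set of primes dividing $N$; thus $V\in\mathcal{S}^2_{\mathrm{rat}}(K|\mathbb{Q})_{\mathrm{fin}}\subset\overline{\mathcal{S}}^2_{\mathrm{rat}}(K|\mathbb{Q})_{\mathrm{fin}}$, and since $1$ appears among the exponents $j$ with $\gcd(j,N)=1$ the analysis above gives periodicity exactly $N$. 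To land inside $\mathcal{S}^2_{\mathrm{rat}}(K|\mathbb{Q})$ without inverting primes, one multiplies $a_n$ by an integer supported on the primes dividing $N$, exactly as in the proof of \Cref{main theorem (2)}.
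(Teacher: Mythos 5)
Most of what you do is sound and, in places, tidier than the paper: the closed form $V(z)=\frac{z}{1-z^{P_V}}\sum_{i=0}^{P_V-1}a_{i+1}z^i$ is obtained exactly as in the paper, your identification of $P_V$ with the least common multiple of the orders of the poles $\alpha_i$ (via linear independence of $n\mapsto\alpha_i^n$, i.e.\ a Vandermonde argument) gives $[V(z)]_n\in K\cap\mathbb{Q}(\zeta_{P_V})$ more transparently than the paper does, and your Ramanujan-sum example for surjectivity works, though the paper's choice $\varepsilon_N^{(2)}\bigl(\tfrac{z}{1-z}\bigr)=\tfrac{N^2z^N}{1-z^N}$ lands in $\mathcal{S}^2_{\mathrm{rat}}(K|\mathbb{Q})$ with no fuss at primes dividing $N$.

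The genuine gap is the step you yourself flag: producing a pole at a \emph{primitive} $P_V$-th root of unity, i.e.\ $A_1\neq 0$. Your hoped-for upgrade ``the reduced denominator is divisible by $\Phi_{P_V}$'' cannot be carried out, because the underlying claim is false as stated. Take $a_n=(-1)^n+\omega^n+\omega^{2n}$ with $\omega$ a primitive cube root of unity; this is an integer sequence depending only on $n\bmod 6$, and for every prime $p\geq 5$ one has $a_{p^rm}=a_{p^{r-1}m}$ exactly, so $V=\sum a_nz^n$ lies in $\overline{\mathcal{S}}^2_{\mathrm{rat}}(K|\mathbb{Q})_{\mathrm{fin}}$ (even in $\mathcal{S}^2_{\mathrm{rat}}(K|\mathbb{Q})$ for $K=\mathbb{Q}(\zeta_{12})$, where $2$ and $3$ ramify). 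Here $P_V=6$, the reduced denominator is $\Phi_2\Phi_3=(1+z)(1+z+z^2)$, which divides no $1-z^d$ with $d$ a proper divisor of $6$, and yet $\Phi_6$ does not divide it: no pole is a primitive sixth root of unity, so $A_1=0$ for every choice of primitive $\zeta_6$. So the obstruction you isolate is not merely delicate, it is unbridgeable in this generality; the correct invariant is the one you actually prove, namely that $P_V$ is the lcm of the orders of the poles, not that some single pole has order $P_V$. For comparison, the paper's own proof of $A_1\neq0$ argues that if all $A_i$ with $\gcd(i,\hat N)=1$ vanish then $V$ admits a representation with smaller $N$; this inference silently assumes that the non-primitive $\hat N$-th roots of unity occurring as poles lie in a common $\mu_d$ for a single proper divisor $d\mid\hat N$, which holds when $\hat N$ is a prime power but fails in the example above (orders $2$ and $3$ forcing $\hat N=6$). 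So your instinct about where the difficulty sits is exactly right, but as a proof of the proposition as stated the attempt is incomplete — and the missing statement itself needs to be weakened rather than proved.
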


\begin{proof}
By \Cref{main theorem}, there is an $N \in \mathbb{N}$ and a primitive $N$-th root of unity and suitable coefficients $A_i$, $1\leq i\leq N$ such that
\begin{align}\label{eq: again main theorem}
V(z)= \sum_{i=1}^N \frac{A_i\zeta^i z}{1-\zeta^iz}.
\end{align}
From this representation of $V$, one immediately sees $P_V \leq N$.
Let $\hat N$ denote the minimum of the set of all $N\in \mathbb{N}$, such that $V$ permits a representation given by \cref{eq: again main theorem}.
In particular, $\hat N \leq P_V$.
Assume that all $A_i$ (which depend on $\hat N$) with $\gcd(i,\hat N)=1$ are vanishing.
In that case, $V$ has an representation given by \cref{eq: again main theorem} with $N \leq \hat N$, violating the minimality of $\hat N$.
Therefore, we may assume, that at least one $A_i$ does not vanish for $\gcd (i, \hat N)=1$.
This implies by the representation given in \cref{eq: again main theorem}, that $V$ has a pole of order $1$ at an $\hat N$-th primitive root of unity, say $\zeta_{\hat N}$.
Therefore, we may assume $A_1\neq 0$.
Since $V$ has periodicity $P_V$, we can write
\begin{align*}
V(z) = \sum_{i=1}^\infty a_i z^i = \sum_{i=1}^{P_V} a_i \sum_{k=0}^\infty z^{i + P_V k}
= \sum_{i=1}^{P_V} a_i z^i \sum_{k=0}^\infty z^{P_V k}= \frac{z}{1-z^{P_V}} \sum_{i=0}^{P_V -1}a_{i+1}z^i.
\end{align*}
This shows, that all singularities of $V$ are roots of the polynomial $1-z^{P_V}$.
Therefore, $\zeta_{\hat N}$ is a $P_V$-th root of unity and hence, $\hat N \leq P_V$.
We conclude $\hat N = P_V$.
For the surjectivity of $\pi$ recall the map $\varepsilon^{(2)}_k\colon \mathcal{S}^2(K|\mathbb{Q}) \rightarrow \mathcal{S}^2(K|\mathbb{Q})$ for $k\in \mathbb{N}$ from \cref{eq: shift}.
Now, take $N\in \mathbb{N}$, and let $V(z)=\frac{z}{1-z} = \sum_{k=1}^\infty z^k \in \mathcal{S}^2{\mathrm{rat}}(K|\mathbb{Q})$.
Then, in particular, $\varepsilon^{(2)}_N V \in \mathcal{S}^2_{\mathrm{rat}} (K|\mathbb{Q})$, and
\begin{align*}
\pi \left( \varepsilon_N^{(2)}\left(V(z)\right)\right)= \pi\left( \frac{N^2 z^N}{1-z^N}\right) =N.
\end{align*}
Hence, $\pi$ is surjective.
\end{proof}

\Cref{main theorem (2)} now follows from \Cref{finish the puzzle} and \Cref{prop: periodicity}.

\bibliographystyle{amsplain}

\begin{thebibliography}{99}

%\bibitem{Abb05} M. Abbas and S. Bouroubi, \emph{On new identities for Bell's polynomials}, Discrete Mathematics, volume \textbf{293} (2005), pp. 5--10.

\bibitem{alm06} G. Almkvist and W. Zudilin, \emph{Differential equations, mirror maps and zeta values}, Mirror symmetry. V, volume \textbf{38} of AMS/IP Stud. Adv. Math. (2006), pp. 481--515.

\bibitem{ape79} R. Apéry, \emph{Irrationalité de $\zeta(2)$ et $\zeta(3)$}, Astérisque no. \textbf{61} (1979), pp. 11--13.

%\bibitem{bab1819} C. Babbage, \emph{Demonstration of a theorem relating to prime numbers}, The Edinburgh Philosophical Journal \textbf{1}, 1819, pp. 46--49.

%\bibitem{ban02} C. Banderier, P. Flajolet, \emph{Basic analytic combinatorics of directed lattice paths}, Theoretical Computer Science \textbf{281} (2002), pp. 37--80.

%\bibitem{bel19} J.P. Bell, K.D. Nguyen, U. Zannier, \emph{D-finiteness, rationality, and height}

%\bibitem{bell19} J. Bell and D. Smertnig, \emph{Noncommutative rational Pólya Series}, (2019), arxiv: \url{https://arxiv.org/pdf/1906.07271v2}

\bibitem{beu85} F. Beukers, \emph{Some Congruences for the Apery Numbers}, Journal of Number Theory, volume \textbf{21} (1985), issue 2, pp. 141--155.

\bibitem{beu17} F. Beukers, M. Houben, and A. Straub, \emph{Gauss congruences for rational functions in several variables},  Acta Arithmetica, volume \textbf{184}, issue 4 (2018), pp. 341--362.

%\bibitem{bez86} J.-P. Bézivin, \emph{Sur un théorème de G. Pólya}, J. Reine Angew. Math. \textbf{364}, pp. 60--68, (1986).

%\bibitem{bir18} D. Birmajer, J. Gil, and M. Weiner, \emph{A family of Bell transformations}, (2018), arXiv: \url{https://arxiv.org/pdf/1803.07727}.

\bibitem{bru52} V. Brun, J.O. Stubban, J.E. Fjedlstad, R. Tambs Lyche, K.E. Aubert, W. Ljunggren, E. Jacobsthal, \emph{On the divisibility of the difference between two binomial coefficients}, Den 11te Skandinaviske Matematikerkongress, Trondheim (1949), pp. 42--54.

%\bibitem{can92} P. Candelas, X. de la Ossa, P. Green, L. Parkes, \emph{A pair of Calabi-Yau manifolds as an exactly soluble superconformal theory}, Essays on Mirror Manifolds, S.-T. Yau (ed.), International Press, Hong Kong, 1992, pp. 31-95.

%\bibitem{carlitz} L. Carlitz, \emph{A note on power series with integral coefficients}, Bollettino dell'Unione Matematica Italiana, Series 3, volume \textbf{19} (1964), pp. 1--3.

%\bibitem{chen17} K.-W. Chen, \emph{Generalized harmonic numbers and Euler sums}, International Journal of Number Theory, no. 02 (2017), pp. 513--528, url: \url{http://www.worldscientific.com/doi/abs/10.1142/S1793042116500883}

\bibitem{cos88} M.J. Coster, \emph{Supercongruences}, [Thesis] Univ. of Leiden, the Netherlands (1988).

%\bibitem{dold83} A. Dold, \emph{Fixed point indices of iterated maps}, Inventiones mathematicae, vol. \textbf{74} (1983),  Springer Verlag, pp. 419--435.

%\bibitem{dwo60} B. Dwork, \emph{On the Rationality of the Zeta Function of an Algebraic Variety}, American Journal of Mathematics \textbf{82.3}, (1960), pp. 631--648.

%\bibitem{eve03} G. Everest, A. van der Poorten, I. Shparlinski, and T. Ward, \emph{Recurrence sequences}, Mathematical Surveys and Monographs, vol. \textbf{104}, American Mathematical Society, Providence, RI (2003). MR1990179 (2004c:11015)

%\bibitem{fur67} H. Furstenberg, \emph{Algebraic Functions over Finite Fields}, J. Algebra \textbf{7} (1967), pp. 271--277.

\bibitem{gar15} S. Garoufalidis, P. Kucharski, P. Su\l kowski, \emph{Knots, BPS states, and algebraic curves}, Commun. Math. Phys. \textbf{346}, 2016, pp.75--113, \url{https://doi.org/10.1007/s00220-016-2682-z}.

%\bibitem{gerig69} S. Gerig, \emph{Analytic continuation of power series whose coefficients belong to an algebraic number field}, Mathematika \textbf{16} (1969), pp. 167--169.

%\bibitem{gessel83} I. Gessel, \emph{Some Congruences for generalized Euler numbers}, Canadian Journal of Mathematics \textbf{14} (1983), pp. 687-709. doi:10.4153/CJM-1983-039-5.

%\bibitem{ges16} I. Gessel, \emph{Lagrange Inversion}, Journal of Combinatorial Theory, Series A, volume \textbf{144} (2016), pp. 212--249.

\bibitem{gor18} O. Gorodetsky, \emph{$q$-Congruences, with Applications to Supercongruences and the Cyclic Sieving Phenomenon}, (2018), arXiv: \url{https://arxiv.org/abs/1805.01254}.

%\bibitem{gueting61} R. Güting, \emph{Approximation of algebraic numbers by algebraic numbers}, Michigan Math. J. \textbf{8} (1961), no. 2, pp. 149--159, doi:10.1307/mmj/1028998566. \url{https://projecteuclid.org/euclid.mmj/1028998566}

\bibitem{har85} W.A. Harris, Y. Sibuya, \emph{The reciprocals of solutions of linear ordinary differential equations}, Adv. in Math. \textbf{85} (1985), no. 2, pp. 119--132.

\bibitem{jan73} G.J. Janusz, \emph{Algebraic Number Fields}, Academic Press, New York, 1973.

\bibitem{joris77} H. Joris, \emph{On the Evaluation of Gaussian Sums for non-prmitive Dirichlet Characters}, L'Enseignement Mathématique, volume \textbf{23} (1977).

\bibitem{jun31} R. Jungen, \emph{Sur les series de Taylor n'ayant que des singularités algébrico-logarithmiques sur lear cercle de convergence}, Comment. Math. Helv. \textbf{3} (1931), pp. 226--306.

\bibitem{kas14} C. Kassel, C. Reutenauer, \emph{Algebraicity of the zeta function associated to a matrix over a free group algebra}, Algebra Number Theory, \textbf{8(2)}, pp. 497--511, 2014.

%\bibitem{kaz69} G.S. Kazandzidis, \emph{On congruences in number-theory}, Bull. Soc. Math. Grèce (N. S.), \textbf{10}, 1969, pp. 35--40.

%\bibitem{koc00} H. Koch, \emph{Number Theory: Algebraic numbers and functions}, (Amer. Math. Soc, Providence, RI, 2000).

%\bibitem{kon06} M. Kontsevich, A. Schwarz, V. Vologodsky, \emph{Integrality of instanton numbers and p-adic B-model}, Phys. Lett. B \textbf{637}, 97 (2006); [	arXiv:hep-th/0603106].

%\bibitem{kra10} C. Krattenthaler, T. Rivoal, \emph{On the Integrality of the Taylor coefficients of Mirror Maps}, Duke Math. J. \textbf{151}:2 (2010), pp. 175--218; \emph{On the Integrality of the Taylor coefficients of mirror maps, II}, Commun. Number Theory Phys. \textbf{3}:3 (2009), pp. 555-591.

%\bibitem{LangCyclotomic} S. Lang, \emph{Cyclotomic Fields 1 and 2}, Springer New York Berlin Heidelberg (1990), isbn: 0387966714.

%\bibitem{lip90} L. Lipshitz and A.J. van der Poorten, \emph{Rational Functions, Diagonals, Automata and Arithmetic}, Number theory (Banff, AB, 1988) (Berlin: de Gruyter) pp. 339-358.

%\bibitem{mer06} D. Merlini, R. Sprugnoli, and M. Cecilia Verri, \emph{Lagrange Inversion: When and How}, Acta Applicandae Mathematica, volume \textbf{94} (2006), no. 3, pp. 233--249, \url{https://doi.org/10.1007/s10440-006-9077-7}.

%\bibitem{mes11} R. Meštrović, \emph{Wolstenholme's Theorem: Its generalizations and extensions in the last hundred and fifty years (1862-2012)}, eprint: \url{arXiv: 1111.3057v2}.

\bibitem{min14} G.T. Minton, \emph{Linear recurrence sequences satisfying congruence conditions}, Proceedings of the American Mathematical Society, Volume \textbf{142}, nr. 7, July 2014, pp. 2337--2352.

%\bibitem{montgomery07} H. Montgomery and R. Vaughan, \emph{Multiplicative Number Theory 1. Classical Theory}, Cambridge University Press in advanced Mathematics (2007).

%\bibitem{mor93} D. Morrison, \emph{Mirror Symmetry and rational curves on quintic threefolds: A Guide for mathematicians}, J. of the Amer. Math. Soc. \textbf{6} (1993), pp. 223--247.

%\bibitem{mue20} L.F. Müller, \emph{Rational 2-functions are abelian}, \url{https://arxiv.org/abs/2006.06388}, (2020).

%\bibitem{neu92} J. Neukirch, \emph{Algebraische Zahlentheorie}, Springer-Verlag Berlin Heidelberg, (1992).

\bibitem{osb12} R. Osburn and B. Sahu, \emph{A supercongruence for generalized Domb numbers}, Funct. Approx. Comment. Math., Volume \textbf{48}, Number 1 (2013), pp. 29--36.

%\bibitem{osb14} R. Osburn, B. Sahu and A. Straub, \emph{Supercongruences for sporadic sequences}, Proceedings of the Edinburgh Mathematical Society, 59(2), pp. 503--518, doi:10.1017/S0013091515000255.

%\bibitem{Pan18} M. Panfil, M. Stošić, and P. Su\l kowski, \emph{Donaldson-Thomas invariants, torus knots, and lattice paths}, Phys. Rev. D \textbf{98} (2018), \url{https://link.aps.org/doi/10.1103/PhysRevD.98.026022}.

%\bibitem{rey03} J. Arias de Reyna, \emph{Dynamical zeta functions and Kummer congruences}, Acta Arith. \textbf{119} (2005), pp. 39--52.

%\bibitem{ros13} J. Rosen, \emph{Multiple harmonic sums and Wolstenholme's theorem}, Int. J. Number Theory 9, 2013, pp. 2033--2052.

%\bibitem{vdP89} A.J. van der Poorten, \emph{Some facts that should be better known, especially about rational functions}, Number Theory and Applications (Richard A. Mollin, ed.), Kluwer Academic Publishers, 1989, pp. 497-528.

%\bibitem{sch08} A. Schwarz, V. Vologodsky, \emph{Integrality theorems in the theory of topological strings}, Nucl. Phys.B \textbf{821}, 506, 2009.

%\bibitem{sch13} A. Schwarz, V. Vologodsky, and J. Walcher, \emph{Framing the Di-logarithm (over $\mathbb{Z}$)}, Contribution to Proceedings of String-Math 2012, Bonn, \url{https://arxiv.org/abs/1306.4298}.

\bibitem{walcher16} A. Schwarz, V. Vologodsky, and J. Walcher, \emph{Integrality of Framing and geometric origin of $2$-functions (with algebraic coefficients)}, (2016), arXiv: \url{https://arxiv.org/abs/1702.07135}

\bibitem{sch61} M.P. Schützenberger, \emph{On a theorem of R. Jungen}, Proc. Amer. Math. Soc. \textbf{13}, pp. 885--889, 1962.

\bibitem{stan80} R.P. Stanley, \emph{Differentiably finite power series}, European J. Combin. \textbf{1}, pp. 175--188, 1980.

%\bibitem{tra74} Yu. A. Trakhtman, \emph{On the divisiblity of certain differences fromed from binomial coefficients} (Russian), Doklady Akad. Nauk Arm. S. S. R. \textbf{59}, 1974, pp. 10--16.

%\bibitem{wal12} J. Walcher, \emph{On the arithmetic of D-branes superpotentials}, Comm. Num. Th. Phys. \textbf{6}, no. 2, pp. 279--337, 2012.

%\bibitem{war1782} E. Waring, \emph{Meditationes Algebraicae}, Editio tertia recensita et aucta, Cambridge: J. Nicholson, 1782.

%\bibitem{wol1862} J. Wolstenholme, \emph{On certain properties of prime numbers}, The Quarterly Journal of Pure and Applied Mathematics, \textbf{5}, pp. 35--39, 1862.

%\bibitem{zag07} D. Zagier, \emph{The Dilogarithm Function}, Frontiers in number theory, physics, and geometry, II., pp. 3--65, Springer, Berlin, 2007. 

\bibitem{zar08} A.V. Zarelua, \emph{On Congruences for the Traces of Powers of Some Matrices}, Proceedings of the Steklov Institute of Mathematics (2008), vol. \textbf{263}, pp. 78--98.

\end{thebibliography}

\end{document}